\newtheorem{theorem}{Theorem}
\newtheorem{lemma}{Lemma}
\newtheorem{proposition}{Proposition}
\newtheorem{corollary}{Corollary}
\newtheorem{remark}{Remark}
\newtheorem{example}{Example}
\newtheorem{definition}{Definition}
 \newcommand{\reals}{{\mathbb{R}}}
 \newcommand{\naturals}{{\mathbb{N}}}
 \newcommand{\integers}{{\mathbb{Z}}}
 \newcommand{\C}[1]{\mathbf{C^{#1}}}
 \renewcommand{\L}[1]{\mathbf{L^{#1}}}
 \newcommand{\Lip}{\mathbf{Lip}}
 \newcommand{\lip}{\mathrm{Lip}}
 \newcommand{\Cc}[1]{\mathbf{C_c^{#1}}}
 \newcommand{\PC}{\mathbf{PC}}
 \renewcommand{\d}{\rm{d}}
 \newcommand{\iw}{\textrm{i}_{w}}
 \newcommand{\caratt}[1]{{\displaystyle\chi_{\strut{\textstyle #1}}}}
 \newcommand{\modulo}[1]{{\left|#1\right|}}
 \newcommand{\norma}[1]{{\left\|#1\right\|}}
 \newcommand{\BV}{\mathbf{BV}}
 \newcommand{\tv}{\mathrm{TV}}
 \newcommand{\sign}{\mathrm{sign}}
 \newcommand{\supp}{\mathrm{supp}}
 \newcommand{\Lloc}[1]{\mathbf{L^{#1}_{loc}}}
\begin{document}

\title{CROWD DYNAMICS AND CONSERVATION LAWS WITH NON--LOCAL CONSTRAINTS}

\author{Boris Andreianov}

\address{Laboratoire de Math\'ematiques,
Universit\'e de Franche-Comt\'e,\\
16 route de Gray,
25030 Besan\c{c}on Cedex,
France}
\email{boris.andreianov@univ-fcomte.fr}

\author{Carlotta Donadello}

\address{Laboratoire de Math\'ematiques,
Universit\'e de Franche-Comt\'e,\\
16 route de Gray,
25030 Besan\c{c}on Cedex,
France}
\email{carlotta.donadello@univ-fcomte.fr}

\author{Massimiliano D.~Rosini}

\address{
ICM, University of Warsaw\\
ul.~Prosta 69,
P.O. Box 00-838,
Warsaw, Poland}
\email{mrosini@icm.edu.pl}

\maketitle

{
\rightskip .85 cm
\leftskip .85 cm
\parindent 0 pt
\begin{footnotesize}

{\sc Abstract.} In this paper we model pedestrian flows evacuating a narrow corridor through an exit by a one--dimensional hyperbolic conservation law with a non--local constraint. Existence and stability results for the Cauchy problem with Lipschitz constraint are achieved by a procedure that combines the wave--front tracking algorithm with the operator splitting method. The Riemann problem with piecewise constant constraint is discussed in details, stressing the possible lack of uniqueness, self--similarity and $\Lloc1$--continuity. One explicit example of application is provided.

\medskip\noindent
{\sc Keywords:} Crowd dynamics, constrained hyperbolic PDE's, non--local constraints

\medskip\noindent
{\sc AMS subject classification:} 35L65, 90B20.

\end{footnotesize}

}

\section{Introduction}

The theory for constrained conservation laws was introduced by Colombo and Goatin in Ref.~\cite{ColomboGoatinConstraint}. Their results are of interest in many real--life applications, such as vehicular traffic, Refs.~\cite{CGRESAIM}, \cite{GaravelloGoatin}, pedestrian flows, Refs.~\cite{chalonsgoatinseguin}, \cite{ColomboRosini1}, telecommunications, supply--chains, etc.

For pedestrians, constraints are usually caused by a direct capacity reduction (door or obstacle) and are of fundamental importance in the calculation of evacuation times. The first macroscopic model for pedestrian evacuations able to reproduce the fall in the efficiency of an exit when a high density of pedestrians clogs it, was the CR~model proposed in Ref.~\cite{ColomboRosini1} and developed in Refs.~\cite{colombo2010macroscopic}, \cite{gakoto}, \cite{ColomboRosini2}, \cite{Rosini09}; see also Ref.~\cite{Rosinibook}. There, the maximal outflow allowed through the exit is assumed to be a piecewise constant function of the density at the exit, and takes two distinct values, one related to the ``standard'' case, when the density is less than an assigned threshold, and one related to the case with ``panic'', when the density is greater than the threshold. As a result, the fall in the efficiency of the exit has a non--realistic behavior since it is instantaneous when the panic reaches the exit. A more realistic model should reproduce a more gradual decay in the efficiency of the exit as the pedestrians accumulate close to it, see Refs.~\cite{Predtetschenski1971b}, \cite{Steffen20101902}. Moreover, according to the CR~model, once the efficiency of the exit falls down, it remains constant until the very last pedestrian is evacuated. On the contrary, in real life the efficiency of the exit gradually increases as the number of the remaining pedestrians to be evacuated becomes smaller and smaller.

To avoid these drawbacks of the CR~model, in this paper we generalize the results proved in Ref.~\cite{ColomboGoatinConstraint} and study the Cauchy problem for a one--dimensional hyperbolic conservation law with non--local constraint of the form
\begin{subequations}\label{eq:constrianed}
\begin{align}\label{eq:constrianed1}
    \partial_t\rho + \partial_xf(\rho) &= 0 & (t,x) &\in \reals_+\times\reals\\
    \label{eq:constrianed2}
    f\left(\rho(t,0\pm)\right) &\le p\left( \int_{\reals_-} w(x) ~ \rho(t,x) ~{\d} x\right) & t &\in \reals_+\\
    \label{eq:constrianed3}
    \rho(0,x) &=\rho_0(x) & x &\in \reals ~.
\end{align}
\end{subequations}
Above, $\rho = \rho(t,x) \in \left[0,R\right]$ is the (mean) density at time $t \in \reals_+$ of pedestrians moving along the corridor parameterized by the coordinate $x \in \reals_-$. Then, $R \in \reals_+$ is the maximal density, $f ~\colon~ [0,R] \to \reals$ is the pedestrian flow with pedestrians moving in the direction of increasing $x$, $p ~\colon~ \reals_+ \to \reals_+$ prescribes the maximal flow allowed through an exit placed in $x=0$ as a function of the weighted average density of pedestrians in a left neighborhood of the exit, $w  ~\colon~ \reals_- \to \reals_+$ is the weight function used for the average density and $\rho_0  ~\colon~ \reals \to [0,R]$ is the initial (mean) density. Finally, $\rho(t,0-)$ denotes the left measure theoretic trace along the constraint implicitly defined by
\begin{align*}
    \lim_{\varepsilon\downarrow0}\frac{1}{\varepsilon} \int_0^{+\infty}  \int_{-\varepsilon}^0 \modulo{\rho(t,x) - \rho(t,0-)} ~\phi(t,x) ~{\d} x ~{\d} t&=0
\end{align*}
for all $\phi \in \Cc\infty(\reals^2;\reals)$. The right measure theoretic trace, $\rho(t,0+)$, is defined analogously.

Observe that if $w$ is regular enough and $\lim_{x \to -\infty} w(x) = 0$, then the quantity
\begin{align}\label{eq:xi}
    \xi(t) = \int_{\reals_-} w(x) ~\rho(t,x) ~{\d}x
\end{align}
is the solution of the following Cauchy problem for an ordinary differential equation
\begin{align*}
    \dot\xi(t) &= \int_{\reals_-} \dot w(x) ~\left[ f\left(\rho(t,x)\right) - f\left(\rho(t,0-)\right) \right] ~{\d}x~,&
    \xi(0) &= \int_{\reals_-} w(x) ~\rho_0(x) ~{\d}x ~.
\end{align*}

In real life, when a very high density of pedestrians accumulate near the exit, the outgoing flow can be very small, but remains strictly positive. For this reason, the efficiency of the exit $p$ is assumed to be always strictly positive. We assume that the weight $w$ is an increasing function with compact support because the efficiency of the exit is more affected by the closest high densities, while it does not take into account ``far'' densities. In summary, we assume that:
\begin{enumerate}[\textbf{(P0)}]
  \item[\textbf{(F)}]  $f \in \Lip\left( [0,R]; \left[0, +\infty\right[ \right)$, $f(0) = 0 = f(R)$ and there exists $\bar\rho \in \left]0,R\right[$ such that $f'(\rho)~(\bar\rho-\rho)>0$ for a.e.~$\rho \in [0,R]$.
  \item[\textbf{(W)}] $w \in \L\infty(\reals_-;\reals_+)$ is an increasing map, $\norma{w}_{\L1(\reals_-;\reals_+)} = 1$ and there exists $\iw >0$ such that $w(x) = 0$ for any $x \le -\iw$.
  \item[\textbf{(P0)}] $p$ takes values in $\left]0,f(\bar\rho)\right]$ and is a non--increasing map.
\end{enumerate}

\begin{figure}[htpb]
 \centering
  \includegraphics[width=.75\textwidth]{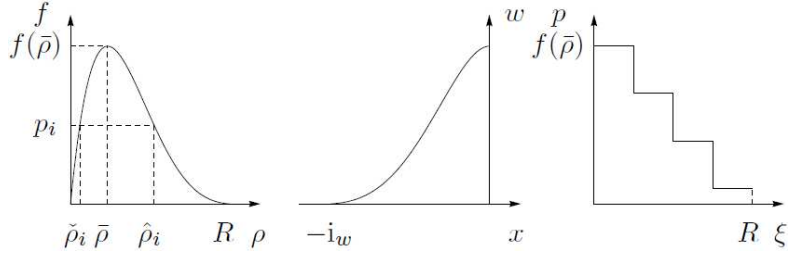}
  \caption{Examples of functions satisfying conditions~\textbf{(F)}, \textbf{(W)}, \textbf{(P0)} and \textbf{(P2)}.}
\label{fig:Korn}
\end{figure}

Observe that $f(\rho) < f(\bar\rho)$ for any $\rho \ne \bar\rho$ and $\xi(t) \in \left[0, R\right]$, see Fig.~\ref{fig:Korn}. In the present work we do not take into account the presence of a panic regime since in~\textbf{(F)} we assume that the fundamental diagram $[\rho \mapsto \left(\rho, f(\rho)\right)]$ is bell--shaped. Indeed, the CR~model introduces a flux that results from the juxtaposition of two bell--shaped sub--fluxes corresponding to the two regimes quiet--panic and, therefore, does not satisfy the condition~\textbf{(F)}. The latter assumption~\textbf{(P0)} is the minimal requirement for~\eqref{eq:constrianed} to be meaningful in the sense of distributions, see Definition~\ref{def:entropysol}. While existence for the Riemann problem is proved for piecewise constant $p$, see~\textbf{(P2)} in Sec.~\ref{sec:Riemann}, we strengthen the assumption on $p$ to a Lipschitz continuity hypothesis when dealing with the Cauchy problem, see~\textbf{(P1)} and Theorem~\ref{thm:1} in Sec~\ref{sec:Cauchy}.

We  give the definition of solution for problem with nonlocal constraint~\eqref{eq:constrianed} by extending the definition of entropy weak solution for a constrained Cauchy problem of the form
\begin{subequations}\label{eq:constrianedLOCAL}
\begin{align}\label{eq:constrianedLOCAL1}
    \partial_t\rho + \partial_xf(\rho) &= 0 & (t,x) &\in \reals_+\times\reals\\
    \label{eq:constrianedLOCAL2}
    f\left(\rho(t,0\pm)\right) &\le q\left(t\right) & t &\in \reals_+\\
    \label{eq:constrianedLOCAL3}
    \rho(0,x) &=\rho_0(x) & x &\in \reals ~.
\end{align}
\end{subequations}
\begin{definition}\label{def:entropysol}
    Assume conditions~\textbf{(F)}, \textbf{(W)}, \textbf{(P0)}. A map $\rho \in \L\infty( \reals_+\times\reals;[0,R]) \cap \C0(\reals_+; \Lloc1(\reals;[0,R]))$ is an entropy weak solution to~\eqref{eq:constrianed} if there exists $q \in \L\infty( \reals_+;[0, f(\bar\rho)])$ such that the following conditions hold:
    \begin{enumerate}
    \item\label{cond:1b} For every test function $\phi \in \Cc\infty(\reals^2; \reals_+)$ and for every $k \in [0,R]$
    \begin{subequations}\label{eq:entropysol}
    \begin{align}\label{eq:entropysol1}
        &\int_{\reals_+} \int_\reals \left[ \modulo{\rho-k} \partial_t\phi + \sign(\rho-k) \left(f(\rho) - f(k)\right) \partial_x\phi \right] ~{\d} x ~{\d} t\\ \label{eq:entropysol2}
        &+ 2 \int_{\reals_+} \left[1 - \dfrac{q\left( t\right)}{f(\bar\rho)}   \right] f(k) ~\phi(t,0) ~{\d} t\\ \label{eq:entropysol3}
        &+ \int_\reals \modulo{\rho_0(x) - k} ~\phi(0,x) ~{\d} x
        \ge 0~,
    \end{align}
    and
    \begin{align}\label{eq:entropysol4}
        &f\left(\rho(t, 0\pm)\right) \le q\left(t\right) \hbox{ for a.e.~}t \in \reals_+~.
    \end{align}
    \end{subequations}
    \item In addition $q$ is linked to $\rho$ by the relation
    \begin{align}\label{eq:entropysol5}
        q(t) = p\left( \int_{\reals_-} w(x) ~ \rho(t,x) ~{\d} x\right) \hbox{ for a.e.~}t \in \reals_+~.
    \end{align}
    \end{enumerate}
\end{definition}
If $q$ is given \textit{a priori}, then~\eqref{eq:entropysol} is the definition of entropy weak solution
to problem~\eqref{eq:constrianedLOCAL}. This item is precisely the Definition~2.1 in Ref.~\cite{scontrainte}, which is a minor generalization of the original Definition~3.2 introduced in Ref.~\cite{ColomboGoatinConstraint}.
We refer to  Proposition~2.6 in Ref.~\cite{scontrainte} for a series of equivalent formulations of conditions~\eqref{eq:entropysol}.

The lines~\eqref{eq:entropysol1} and~\eqref{eq:entropysol3} originate from the classical Kru\v zkov Definition~1 in Ref.~\cite{Kruzkov}, in the case of the Cauchy problem with no constraints. Lines~\eqref{eq:entropysol2} and~\eqref{eq:entropysol4} account for the constraint. Let us stress that both left and right traces at $x=0$ of an entropy weak solution exist (see, e.g., Theorem~2.2 in Ref.~\cite{scontrainte} which is a reformulation of the results of Refs.~\cite{Panov}, \cite{Vasseur}).

Our main result is well-posedness for the nonlocal problem~\eqref{eq:constrianed}, see Theorem~\ref{thm:1}. We show that under the Lipschitz continuity assumption on $p$ there exists a semigroup $(\mathcal{S}_t)_{t>0}$ on $\L\infty(\reals;[0,R])$
such that $\rho(t,\cdot)=\mathcal{S}_t(\rho_0)$ is the unique solution of~\eqref{eq:constrianed}, and depends continuously on $t$ and $\rho_0$ with respect to the $\Lloc1$--distance.

The uniqueness result for~\eqref{eq:constrianed} is a consequence of a stability estimate for the problem with local constraint~\eqref{eq:constrianedLOCAL} with respect to the $\Lloc1$--distance, of the relation~\eqref{eq:entropysol5} and of the Gronwall inequality.

The existence result for~\eqref{eq:constrianed} is achieved through an operator splitting method, Refs.~\cite{ColomboCorli5}, \cite{ColomboCorliRosini}, \cite{ColomboRosini}, \cite{ColomboRosini3}, \cite{DafermosBook}, coupled with the wave--front tracking algorithm, Refs.~\cite{ColomboGoatinConstraint}, \cite{DafermosWFT}. This procedure is chosen for two reasons. First, wave--front tracking schemes are able to operate also in the case with panic, when nonclassical shocks away from the constraint have to be taken into account, see Ref.~\cite{chalonsgoatinseguin}. Second, the operator splitting procedure allows us to approximate our problem with a problem of type~\eqref{eq:constrianedLOCAL}, namely with a ``frozen'' constraint. This greatly simplify our work because it avoids the difficulties coming from the Riemann solver for the nonlocally constrained problem. Indeed, we underline the fact that, differently from the constrained Cauchy problems studied in Refs.~\cite{scontrainte}, \cite{ColomboGoatinConstraint}, the maximal flow at the constraint for~\eqref{eq:constrianed} depends on the solution itself and, in general, it is an unknown variable of the problem. As soon as $p$ is discretized, \textit{i.e.}~$p$ is approximated by piecewise constant functions, the solution of the corresponding nonlocally constrained Riemann problem may fail to be unique, $\Lloc1$--continuous, consistent and self--similar, as we will show in Sec.~\ref{sec:Riemann}. Furthermore, using wave--front tracking for a nonlocal problem is quite delicate because one cannot merely juxtapose local solutions of Riemann problems, see Remark~\ref{rem:catastrofe}.

The use of wave--front tracking approximation requires the $\BV$ functional setting. As already observed in Ref.~\cite{ColomboGoatinConstraint}, the constraint may cause sharp increases in the total variation $\tv(\rho)$ of the solution. To overcome this difficulty, as in Refs.~\cite{CocliteRisebro2005}, \cite{ColomboGoatinConstraint}, \cite{CGRESAIM}, \cite{Temple82}, we rather estimate the total variation of $\Psi\circ\rho$, where
\begin{equation}\label{eq:psi}
    \Psi(\rho) = \sign(\rho-\bar\rho) ~ \left(f(\bar\rho)-f(\rho)\right)=\int_{\bar \rho}^\rho \modulo{\dot f(r)} ~{\d}r~.
\end{equation}
We stress that $\Psi$ is one--to--one, but possibly singular at $\rho = \bar\rho$. Indeed, if $\rho$ is in $\BV$, then also $\Psi\circ\rho$ is in $\BV$, while the reverse implication does not hold true. Therefore we introduce the set
\begin{equation}\label{eq:domain}
    \mathcal{D} = \left\{ \rho \in \L1 \left(\reals ; [0,R]\right) ~\colon~ \Psi(\rho) \in \BV(\reals;\reals) \right\}~.
\end{equation}
In the first step of our construction, we exploit the finite speed of propagation property for the conservation law~\eqref{eq:constrianed1} to define $\mathcal{S}_t$ on the domain $\mathcal{D}$ by coupling the operator splitting method with the wave--front tracking algorithm, see Sec~\ref{sec:3}. Then we extend $\mathcal{S}_t$ to $\L\infty$ by a density argument, see the second part of the proof of Theorem~\ref{thm:1}.

The paper is organized as follows. Sec.~\ref{sec:Cauchy} and~\ref{sec:3} are devoted to the constrained Cauchy problem. In Sec.~\ref{sec:Riemann} we study~\eqref{eq:constrianed} with a Riemann initial datum. In Sec.~\ref{sec:example} we apply the model~\eqref{eq:constrianed} to describe the evacuation of a corridor through an exit placed at $x=0$. All the technical proofs are in Sec.~\ref{sec:tech}. Conclusions and perspectives are outlined in Sec.~\ref{sec:conlusion}.

\section{The Cauchy problem with nonlocal constraint}\label{sec:Cauchy}

In this section we consider the Cauchy problem~\eqref{eq:constrianed} under the hypotheses~\textbf{(F)}, \textbf{(W)} and the following assumption on $p$:
\begin{enumerate}
\item[\textbf{(P1)}] $p$ belongs to $\Lip \left( \left[0,R\right]; \left]0,f(\bar\rho)\right] \right)$ and it is a non-increasing map.
\end{enumerate}
Let us start with the basic properties of entropy weak solutions to~\eqref{eq:constrianed}.
\begin{proposition}\label{prop:ws}
    Let $[t \mapsto \rho(t)]$ be an entropy weak solution of~\eqref{eq:constrianed} in the sense of Definition~\ref{def:entropysol}. Then
    \begin{enumerate}[(1)]
      \item It is also a weak solution of~\eqref{eq:constrianed1}, \eqref{eq:constrianed3}.
      \item Any discontinuity satisfies the Rankine--Hugoniot jump condition.
      \item Any discontinuity away from the constraint is classical, \textit{i.e.}~satisfies the Lax entropy inequalities.
      \item Nonclassical discontinuities, see Refs.~\cite{LeflochBook}, \cite{Rosinibook}, may occur only at the constraint location $x=0$, and in this case the flow at $x=0$ is the maximal flow allowed by the constraint. Namely, if the solution contains a nonclassical discontinuity for all times $t \in I$, $I$ open in $\reals_+$, then for a.e.~$t$ in $I$
          \begin{align}\label{eq:ws}
            f\left( \rho(t, 0-) \right) = f\left( \rho(t, 0+) \right) = p\left( \int_{\reals_-} w(x) ~ \rho(t,x) ~{\d} x\right) ~.
          \end{align}
    \end{enumerate}
\end{proposition}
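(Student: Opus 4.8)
The plan is to read each of the four assertions off the entropy formulation~\eqref{eq:entropysol} by specializing the constant $k$ and the support of the test function $\phi$. For~(1) and~(2) I would use the two extreme values $k=0$ and $k=R$. By~\textbf{(F)} we have $f(0)=f(R)=0$, so the interface term~\eqref{eq:entropysol2} carries the factor $f(k)$ and \emph{vanishes} for both choices. Since $0\le\rho\le R$, the choice $k=0$ gives $\modulo{\rho}=\rho$ and $\sign(\rho)\left(f(\rho)-f(0)\right)=f(\rho)$, while $k=R$ gives $\modulo{\rho-R}=R-\rho$ and $\sign(\rho-R)\left(f(\rho)-f(R)\right)=-f(\rho)$; in the latter the constant contributions cancel through the identity $\int_{\reals_+}\partial_t\phi\,\d t=-\phi(0,x)$, valid for every compactly supported $\phi$. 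The two resulting inequalities are opposite to one another, whence
\[
\int_{\reals_+}\int_\reals\left[\rho\,\partial_t\phi+f(\rho)\,\partial_x\phi\right]\d x\,\d t+\int_\reals\rho_0(x)\,\phi(0,x)\,\d x=0
\]
first for $\phi\ge0$ and then, by linearity, for every $\phi\in\Cc\infty(\reals^2;\reals)$. This is exactly the weak formulation of~\eqref{eq:constrianed1}, \eqref{eq:constrianed3}, proving~(1); assertion~(2) is then the classical fact that a weak solution in divergence form satisfies the Rankine--Hugoniot condition along its discontinuities.

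For~(3) I would restrict to test functions supported in $\{x<0\}$ or in $\{x>0\}$. For such $\phi$ one has $\phi(t,0)=0$, so~\eqref{eq:entropysol2} disappears and~\eqref{eq:entropysol1}, \eqref{eq:entropysol3} reduce to the plain Kru\v zkov inequalities on each open half--plane. Hence $\rho$ is a Kru\v zkov entropy solution in $\{x<0\}$ and in $\{x>0\}$ separately, and the standard argument shows that every discontinuity located off the constraint is Lax--admissible, i.e.\ classical.

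Finally, for~(4), assertion~(3) already forces nonclassical discontinuities to sit at $x=0$. A discontinuity standing at the fixed location $x=0$ for all $t\in I$ is stationary, so the Rankine--Hugoniot relation of~(2) yields $f(\rho(t,0-))=f(\rho(t,0+))=:F(t)$, which is the first equality in~\eqref{eq:ws}. To identify this common value with $q(t)$ (recall~\eqref{eq:entropysol5}) I would extract the interface entropy inequality from~\eqref{eq:entropysol}: writing $\Phi(a,k)=\sign(a-k)\left(f(a)-f(k)\right)$ for the Kru\v zkov entropy flux, subtracting the interior inequalities of~(3) (integrated by parts against the strong traces at $x=0$) from the global inequality leaves, for a.e.\ $t$ and every $k$,
\[
\Phi\left(\rho(t,0+),k\right)-\Phi\left(\rho(t,0-),k\right)\le 2\left[1-\frac{q(t)}{f(\bar\rho)}\right]f(k).
\]
Choosing $k=\bar\rho$, so that $f(k)=f(\bar\rho)$ is maximal, and using that a nonclassical stationary discontinuity is exactly the undercompressive configuration $\rho(t,0-)>\bar\rho>\rho(t,0+)$ (the classical one being $\rho(t,0-)\le\bar\rho\le\rho(t,0+)$), the two sign factors give $\sign(\rho(t,0+)-\bar\rho)-\sign(\rho(t,0-)-\bar\rho)=-2$, so the left--hand side equals $2\left(f(\bar\rho)-F(t)\right)$. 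The inequality then reads $f(\bar\rho)-F(t)\le f(\bar\rho)-q(t)$, that is $F(t)\ge q(t)$; combined with the constraint $F(t)\le q(t)$ of~\eqref{eq:entropysol4} this forces $F(t)=q(t)$, which together with~\eqref{eq:entropysol5} is precisely~\eqref{eq:ws}.

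I expect the main obstacle to be the rigorous derivation of the displayed interface inequality: it requires knowing that $\rho$ is a Kru\v zkov solution on each half--plane and handling the boundary contributions through the measure--theoretic traces at $x=0$, whose very existence rests on the trace theory recalled after Definition~\ref{def:entropysol}. Once this interface inequality is available, part~(4) is a one--line sign computation, the decisive point being that passing from a classical to a nonclassical configuration flips the sign of the relevant trace term and thereby forces the constraint to be saturated.
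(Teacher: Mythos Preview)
Your proposal is correct and follows essentially the same route as the paper: the choices $k=0,R$ for~(1)--(2), half--plane test functions for~(3), and the specialization $k=\bar\rho$ together with the undercompressive configuration $\rho(t,0-)>\bar\rho>\rho(t,0+)$ for~(4) are exactly what the paper does (the paper writes the resulting interface inequality in terms of $\Psi=-\Phi(\cdot,\bar\rho)$, which is the same computation). The only presentational difference is that, to obtain the interface inequality, the paper plugs in an explicit localized test function $\phi(t,x)=\bigl[\int_{|x|-\varepsilon}^{+\infty}\delta_\varepsilon(z)\,{\d}z\bigr]\psi(t)$ and lets $\varepsilon\downarrow0$, rather than phrasing it as subtracting the interior half--plane inequalities from the global one; both devices are standard and yield the same relation.
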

\begin{proof}
    By taking $k=0$, then $k=R$, in~\eqref{eq:entropysol}, we deduce that any entropy weak solution to~\eqref{eq:constrianed} is also a weak solution to~\eqref{eq:constrianed1}, \eqref{eq:constrianed3}. As a consequence, $\rho$ satisfies the Rankine--Hugoniot jump condition and, in particular, $f\left(\rho(t, 0-)\right) = f\left(\rho(t, 0+)\right)$. By taking in~\eqref{eq:entropysol} a test function with support in $\reals_+ \times \reals_-$, then in $\reals_+ \times \reals_+$, we see that $\rho$ is also a classical Kru\v zkov solution to~\eqref{eq:constrianed1}, \eqref{eq:constrianed3} in $\reals_+ \times \reals_\pm$ and therefore the jumps in $\rho$ located at $x\neq0$ satisfy the Lax entropy inequalities. Finally, we prove property~\eqref{eq:ws}. This property was observed at the level of problem~\eqref{eq:constrianedLOCAL}, see in particular the description of the ``germ'' $\mathcal{G}_F$ in Ref.~\cite{scontrainte}, but it was not explicitly stated in the works Refs.~\cite{scontrainte}, \cite{ColomboGoatinConstraint} devoted to problem~\eqref{eq:constrianedLOCAL}. For the sake of completeness, we give an explicit proof of property~\eqref{eq:ws} for problem~\eqref{eq:entropysol}. Consider the test function
    \begin{align*}
        \phi(t,x) &= \left[\int_{\modulo{x}-\varepsilon}^{+\infty} \delta_\varepsilon(z) ~{\d}z \right] \psi(t) ~,
    \end{align*}
    where $\psi\in\Cc\infty(\reals;\reals_+)$ is such that $\psi(0)=0$, while $\delta_\varepsilon$ is a smooth approximation of the Dirac mass centered at $0+$, $\delta^D_{0+}$, namely
    \begin{align}\label{eq:delta}
        \delta_\varepsilon \in \Cc\infty(\reals; \reals_+),
        ~\varepsilon \in \reals_+,
        ~\supp(\delta_{\varepsilon}) \subseteq[0,\varepsilon],
        ~\norma{\delta_\varepsilon}_{\L1(\reals;\reals)} = 1,
        ~\delta_\varepsilon \to \delta^D_{0+}.
    \end{align}
    Observe that as $\varepsilon$ goes to zero
    \begin{align*}
        \phi(0,x) &\equiv0 \to 0 ~,
        && \partial_t\phi(t,x) = \left[\int_{\modulo{x}-\varepsilon}^{+\infty} \delta_\varepsilon(z) ~{\d}z \right] \dot\psi(t)\to 0 ~,\\
        \phi(t,0) &=\psi(t) \to \psi(t) ~,
        && \caratt{\reals_\pm}(x) ~\partial_x \phi(t,x) \to \mp ~\delta^D_{0\pm}(\modulo{x}) ~\psi(t) ~.
    \end{align*}
    Then, if we take $k=\bar\rho$ and $\phi$ as test function in~\eqref{eq:entropysol}, we obtain as $\varepsilon$ goes to zero
    \begin{align*}
        &\int_{\reals_+} \left[ \Psi\left(\rho(t,0+)\right) - \Psi\left(\rho(t,0-)\right) \right]
        \psi(t) ~{\d}t
        +2\int_{\reals_+} \left[
        f(\bar\rho) - p\left(\xi(t)\right)
        \right]
        ~\psi(t) ~{\d}t \ge 0 ~,
    \end{align*}
    where $\xi$ is defined by~\eqref{eq:xi}. For the arbitrariness of $\psi$, we have for a.e.~$t>0$
    \begin{align*}
        &\Psi\left(\rho(t,0+)\right) - \Psi\left(\rho(t,0-)\right)
        +2 \left[f(\bar\rho) - p\left(\xi(t)\right)\right] \ge0 ~.
    \end{align*}
    Therefore, if for $t \in I$ the solution has a nonclassical discontinuity at the constraint location $x=0$, then by the assumption~\textbf{(F)} and the Rankine--Hugoniot jump condition, $\rho(t, 0+) < \bar\rho < \rho(t, 0-)$ and $p\left(\xi(t)\right) \le f\left(\rho(t,0\pm)\right)$ for a.e.~$t \in I$. Finally, by the condition~\eqref{eq:entropysol4} of Definition~\ref{def:entropysol}, it has to be $p\left(\xi(t)\right) = f\left(\rho(t,0\pm)\right)$ for a.e.~$t \in I$.
\end{proof}

The following theorem on existence, uniqueness and stability of entropy weak solutions of the constrained Cauchy problem~\eqref{eq:constrianed} is the main result of this paper.
\begin{theorem}\label{thm:1}
    Let~\textbf{(F)}, \textbf{(W)}, \textbf{(P1)} hold. Then
     \begin{itemize}
       \item[(i)] For any initial datum $\rho_0 \in \L\infty(\reals;[0,R])$, the Cauchy problem~\eqref{eq:constrianed} admits a unique entropy weak solution $\rho$
           in the sense of Definition~\ref{def:entropysol}. Moreover, if $\tilde\rho = \tilde\rho(t,x)$ is the entropy weak solution corresponding to the initial datum $\tilde\rho_0 \in \L\infty(\reals;[0,R])$, then for all~$T>0$ and $L>\iw$ there holds
           \begin{align}\label{eq:lipdepen-localized}
                \norma{\rho(T) - \tilde\rho(T)}_{\L1([-L,L];\reals)} &\le e^{CT}\norma{\rho_0 - \tilde\rho_0}_{\L1([-(L+MT),(L+MT)];\reals)},
           \end{align}
       where $M=\lip(f)$ and $C=2 \lip(p) \norma{w}_{\L\infty(\reals_-;\reals)}$.
       \item[(ii)] If $\rho_0$ belongs to $\mathcal{D}$, defined as in~\eqref{eq:domain}, then the unique entropy weak solution of problem~\eqref{eq:constrianed} verifies $\rho(t,\cdot)\in \mathcal{D}$ for a.e. $t>0$, and it satisfies
           \begin{align}\label{eq:BVdepen}
                &\tv\left( \Psi\left( \rho(t) \right) \right)
                \le C_t = \tv\left( \Psi\left( \rho_0 \right) \right) + 4f(\bar\rho) + C ~t~,
           \end{align}
           moreover, for a.e.~$t,s$ in $]0,T[$ we have
           \begin{align}\label{eq:BVdepen2}
                &\norma{\Psi\left(\rho(t,\cdot)\right) - \Psi\left(\rho(s,\cdot)\right)}_{\L1(\reals;\reals)}
                \le \modulo{t-s} ~\lip(\Psi) ~C_T ~.
           \end{align}
     \end{itemize}
\end{theorem}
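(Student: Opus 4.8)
The plan is to treat the two assertions by quite different means. The uniqueness and the quantitative stability estimate~\eqref{eq:lipdepen-localized} of part~(i) will follow from the stability theory for the \emph{local} problem~\eqref{eq:constrianedLOCAL}, combined with the coupling relation~\eqref{eq:entropysol5} and a Gronwall argument; whereas existence together with the bounds of part~(ii) will be obtained constructively, first on the smaller class $\mathcal{D}$ via the operator splitting / wave--front tracking scheme of Sec.~\ref{sec:3}, and then on all of $\L\infty(\reals;[0,R])$ by density using~\eqref{eq:lipdepen-localized} itself.

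For uniqueness and stability, I would start from the observation (item~(1) of Proposition~\ref{prop:ws}) that any entropy weak solution $\rho$ of~\eqref{eq:constrianed} is an entropy weak solution of the locally constrained problem~\eqref{eq:constrianedLOCAL} with the particular choice $q(t)=p(\xi(t))$, and likewise $\tilde\rho$ with $\tilde q(t)=p(\tilde\xi(t))$, where $\xi,\tilde\xi$ are given by~\eqref{eq:xi}. I would then invoke the \emph{localized} $\L1$--stability estimate for~\eqref{eq:constrianedLOCAL}, i.e.~the finite--speed--of--propagation version of the stability result of Ref.~\cite{scontrainte}, which bounds $\norma{\rho(T)-\tilde\rho(T)}_{\L1([-L,L];\reals)}$ by $\norma{\rho_0-\tilde\rho_0}_{\L1([-(L+MT),(L+MT)];\reals)}$ plus a term $2\int_0^T \modulo{q(s)-\tilde q(s)}\,\d s$ coming from the discrepancy of the two constraints at $x=0$. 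The point is then to close this inequality: since $p$ is Lipschitz and $w$ is supported in $[-\iw,0]$, one has $\modulo{q(s)-\tilde q(s)}\le \lip(p)\,\modulo{\xi(s)-\tilde\xi(s)}\le \lip(p)\,\norma{w}_{\L\infty(\reals_-;\reals)}\,\norma{\rho(s)-\tilde\rho(s)}_{\L1([-\iw,0];\reals)}$, and because $L>\iw$ the right--hand side is controlled by $\norma{\rho(s)-\tilde\rho(s)}_{\L1([-L,L];\reals)}$. Feeding this back and applying Gronwall's lemma produces the factor $e^{CT}$ with $C=2\lip(p)\norma{w}_{\L\infty(\reals_-;\reals)}$, that is~\eqref{eq:lipdepen-localized}; uniqueness is the special case $\rho_0=\tilde\rho_0$.

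For existence and the estimates~\eqref{eq:BVdepen}, \eqref{eq:BVdepen2} I would restrict first to $\rho_0\in\mathcal{D}$ and analyse the approximate solutions $\rho_n$ produced in Sec.~\ref{sec:3}: over each splitting slab the constraint level $q=p(\xi)$ is frozen and the resulting locally constrained conservation law is solved by wave--front tracking, after which $\xi$ and hence $q$ are updated. The crucial uniform bound is on $\tv(\Psi(\rho_n))$: away from $x=0$ the functional $\Psi$ defined in~\eqref{eq:psi} is designed so that classical interactions do not increase it, while the nonclassical discontinuity at the constraint contributes a uniformly bounded increment (a single jump of $\Psi$ has size at most $2f(\bar\rho)$, which accounts for the additive $4f(\bar\rho)$) together with a term governed by the variation in time of $q$, controlled by $C\,t$ through the Lipschitz bound on $p$. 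This yields~\eqref{eq:BVdepen} uniformly in $n$; Helly's theorem together with time--continuity then gives an $\Lloc1$--convergent subsequence whose limit $\rho$ I would check to be an entropy weak solution, the strong convergence ensuring both that the Kru\v zkov--type inequalities~\eqref{eq:entropysol1}--\eqref{eq:entropysol3} pass to the limit and that $\xi_n(t)\to\xi(t)$, so that the nonlocal relation~\eqref{eq:entropysol5} survives. Estimate~\eqref{eq:BVdepen2} would then follow from~\eqref{eq:BVdepen} and the conservation law~\eqref{eq:constrianed1}, writing $\partial_t\Psi(\rho)=-\Psi'(\rho)\,\partial_xf(\rho)$ and using $\lip(\Psi)$ to obtain the $\Lloc1$ Lipschitz--in--time continuity with constant $\lip(\Psi)\,C_T$. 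Finally, the extension to arbitrary $\rho_0\in\L\infty(\reals;[0,R])$ is achieved by approximating $\rho_0$ in $\Lloc1$ by data in $\mathcal{D}$ and using~\eqref{eq:lipdepen-localized} to see that the corresponding solutions form a Cauchy family, whose limit defines $\mathcal{S}_t\rho_0$.

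I expect the main obstacle to lie in the existence part, and specifically in the passage to the limit with the \emph{nonlocal} constraint. Because the splitting scheme only ever solves frozen--constraint problems, one must show that the piecewise--constant--in--time approximation of $q$ is consistent, i.e.~that $q_n(t)=p(\xi_n(t))$ converges to $p(\xi(t))$; this hinges on the strong $\Lloc1$ compactness furnished by the uniform $\tv(\Psi(\rho_n))$ bound, and is precisely where the delicate phenomenon flagged in Remark~\ref{rem:catastrofe}, that one cannot merely concatenate Riemann solvers for the nonlocal problem, must be circumvented. Controlling the creation of total variation at the constraint uniformly in $n$, so that the additive constants $4f(\bar\rho)$ and $C\,t$ in~\eqref{eq:BVdepen} are not exceeded, is the technical heart of the argument.
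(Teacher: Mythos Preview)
Your proposal is correct and follows essentially the same three-part structure as the paper's proof: the uniqueness/stability argument via the localized $\L1$ estimate for~\eqref{eq:constrianedLOCAL} from Ref.~\cite{scontrainte} closed by Gronwall, existence in $\mathcal{D}$ via the splitting/wave--front tracking construction of Sec.~\ref{sec:3} with a Temple--type functional controlling $\tv(\Psi(\rho^{n,h}))$, and extension to $\L\infty$ by density through~\eqref{eq:lipdepen-localized}. You have also correctly anticipated the two technical points that the paper isolates separately, namely the consistency of the frozen constraint (Lemma~\ref{lem:besancon}) and the passage to the limit in the trace condition~\eqref{eq:entropysol4} (Lemma~\ref{lem:star}).
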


\begin{proof}
The proof consists of three parts, the longest one being postponed to Sec.~\ref{sec:3}.

\vspace{.3cm}
\noindent{\bf{Uniqueness and stability}}
\vspace{.1cm}

\noindent Conditions~\eqref{eq:entropysol} of Definition~\ref{def:entropysol} ensure that for all $t \in [0,T]$ we can apply the stability estimate in Proposition~2.10 in Ref.~\cite{scontrainte}. More specifically, if $\rho$ and $\tilde\rho$ are solutions of~\eqref{eq:constrianed} corresponding to the constraints $q$ and $\tilde q$, and the initial conditions $\rho_0$ and $\tilde\rho_0$, respectively, then
\begin{align*}
    \norma{ \rho(t) - \tilde\rho(t) }_{\L1([-L,L];\reals)}
    \le \norma{\rho_0-\tilde\rho_0}_{\L1(\{\modulo{x}\le L+Mt\};\reals)}
    +2\int_0^t \modulo{q(s) - \tilde q(s)} ~{\d}s .
\end{align*}
By the explicit expression of the constraints $q$ and $\tilde q$, see~\eqref{eq:entropysol5}, we have
\begin{align*}
    \int_0^t \modulo{q(s) - \tilde q(s)} ~{\d}s
    = \int_0^t \modulo{p \left(\int_{\reals_-} w(x) ~\rho(s,x) ~{\d}x \right) - p \left(\int_{\reals_-} w(x) ~\tilde\rho(s,x) ~{\d}x \right)} ~{\d}s ,
\end{align*}
and this quantity is bounded by $\lip(p) ~w(0-) \int_0^t \norma{ \rho(s) - \tilde\rho(s) }_{\L1([-\iw,0];\reals)} {\d}s$ by the Lipschitz continuity of $p$ and H\"older inequality. Note the inclusions $[-L,L] \supseteq [-\iw,0]$ and $\{\modulo{x}\le L+MT\} \supseteq \{\modulo{x}\le L+Mt\}$. We complete the proof by applying Gronwall's inequality, see for instance Ref.~\cite{Rosinibook}.

\vspace{.3cm}
\noindent{\bf{Existence in $\mathcal{D}$}}
\vspace{.1cm}

\noindent The existence problem in the $\mathcal{D}$--framework will be addressed in Sec.~\ref{sec:3}, see Proposition~\ref{prop:Dexistence}. With this result in hand, existence in $\L\infty$ follows by the density argument we develop below.

\vspace{.3cm}
\noindent{\bf{Existence in $\L\infty$}}
\vspace{.1cm}

\noindent Let $\rho_0$ be in $\L\infty(\reals;[0,R])$. By the standard diagonal procedure argument, it is enough to prove existence on an arbitrary time interval $[0,T]$ in $\reals_+$. Introduce a sequence $\Psi_0^n$ in $\BV_{loc}(\reals;\reals)$ which converges pointwise a.e.~to $\Psi_0 = \Psi(\rho_0)$. Set $\rho_0^n = \Psi^{-1}(\Psi_0^n)$. For any $L \in \naturals$ sufficiently large, set $\rho_0^{n,L} = \rho_0^n ~\chi_{\{\modulo{x}\le L+MT\}}$. We have that $\rho_0^{n,L}$ belongs to $\mathcal{D}$ and $\rho_0^{n,L}$ converges to $\rho_0$ in $\Lloc1(\reals;[0,R])$ as $L$ and $n$ go to infinity. Let $\rho^{n,L}$ be the corresponding entropy weak solution of~\eqref{eq:constrianed} constructed in Proposition~\ref{prop:Dexistence}. Then, for any $L>1+\iw$
\begin{enumerate}[(A)]
  \item for all $t \in [0,T]$ we have that $\norma{\rho^{n,L}(t) - \rho^{m,L}(t)}_{\L1([-L,L];\reals)}$ goes to zero as $m$ and $n$ go to infinity;
  \item if $L' > L$, then $\rho^{n,L} \equiv \rho^{n,L'}$ on $[0,T] \times [-L,L]$.
\end{enumerate}
Properties~(A) and~(B) follow by~\eqref{eq:lipdepen-localized}. Then, by taking $L(x) = \lfloor\modulo{x}\rfloor +1$, property~(A) ensures that we can introduce the function $\rho(t,x) = \lim_{n\to+\infty}\rho^{n,L(x)}(t,x)$. By~(B) we also have that $\rho(t,x) = \lim_{n\to+\infty}\rho^{n,L'}(t,x)$ for any $L' > L(x)$.

We prove now that $[t \mapsto \rho(t)]$ is an entropy weak solution to~\eqref{eq:constrianed} with initial datum $\rho_0$. For any compact set $K \subset\reals$, take $L$ such that $[-L+1,L-1] \supseteq (K \cup [-\iw,0])$. Then $\rho^{n,L}$ converges to $\rho$ in $\L1([0,T]\times K;[0,R])$ and consequently for a.e.~$t \in [0,T]$, $q^{n,L}(t) = p\left(\int_{-\iw}^0 w(x) ~\rho^{n,L}(t,x) ~{\d}x\right)$ converges to $q(t) = p\left(\int_{-\iw}^0 w(x) ~\rho(t,x) ~{\d}x\right)$ in $\L1([0,T];\reals)$. This is enough to ensure that for all test functions $\phi$ supported in $[0,T] \times K$, the function $\rho$ satisfies~\eqref{eq:entropysol1}--\eqref{eq:entropysol3} and that~\eqref{eq:entropysol5} holds. In particular the Rankine--Hugoniot condition is satisfied, therefore using Lemma~\ref{lem:star}, we have that $f^n\left(\rho^{n,L}(\cdot,0-)\right)$ converges weakly to $f\left(\rho(\cdot,0-)\right)$ in $\L1([0,T];\reals)$ and $f\left(\rho(\cdot,0-)\right) = f\left(\rho(\cdot,0+)\right)$. Therefore, also~\eqref{eq:entropysol4} holds true.
\end{proof}

\section{Wave--front tracking and operator splitting methods}\label{sec:3}

In this section we construct solutions for initial data in $\mathcal{D}$ and we prove that $\mathcal{D}$ is an invariant domain for the semigroup $\mathcal{S}$.
\begin{proposition}\label{prop:Dexistence}
  For any initial datum $\rho_0$ in $\mathcal{D}$, there exists a unique entropy weak solution of problem~\eqref{eq:constrianed}, $[t \mapsto \rho(t)]$ and $\rho(t)$ belongs to $\mathcal{D}$ for all $t>0$. Moreover, estimates~\eqref{eq:BVdepen} and~\eqref{eq:BVdepen2} are satisfied.
\end{proposition}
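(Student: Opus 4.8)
The plan is to realize $\mathcal{S}$ on $\mathcal{D}$ by the operator splitting / wave--front tracking scheme announced in the introduction and to read off all three claims from uniform a priori bounds on the approximations. Uniqueness requires no new work: since $\mathcal{D}\subset\L\infty(\reals;[0,R])$, the stability estimate of the ``Uniqueness and stability'' step (Proposition~2.10 in Ref.~\cite{scontrainte}, relation~\eqref{eq:entropysol5} and Gronwall) already forces at most one entropy weak solution, so I would concentrate on producing one solution and on the bounds~\eqref{eq:BVdepen}--\eqref{eq:BVdepen2}. I fix $T>0$, a number of slabs $N$ with step $\Delta t=T/N$, and two discretization levels: at the inner level I replace $f$ by a piecewise linear flux $f^\nu$ and $\rho_0$ by a piecewise constant $\rho_0^\nu$ valued on the mesh of $f^\nu$, with $\Psi(\rho_0^\nu)\to\Psi(\rho_0)$ in $\L1$ and $\tv(\Psi(\rho_0^\nu))\to\tv(\Psi(\rho_0))$; at the outer level I \emph{freeze} the constraint, replacing on each slab $[t_n,t_{n+1})$ the nonlocal right--hand side by the constant $q_n=p\bigl(\int_{\reals_-}w(x)\,\rho^\nu(t_n,x)~{\d}x\bigr)$. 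On each slab the approximation then solves a problem of type~\eqref{eq:constrianedLOCAL} with frozen $q$, which I advance by the Colombo--Goatin algorithm of Ref.~\cite{ColomboGoatinConstraint}: classical Riemann solvers for $x\neq0$ and the constrained Riemann solver at the exit, the latter inserting a stationary nonclassical shock that realizes the flux $q_n$ whenever the classical flux at $0$ would exceed it. At $t_{n+1}$ I recompute $\xi^\nu$ and reset the frozen level to $q_{n+1}$. This freezing is precisely what lets me sidestep the nonlocal Riemann solver and its pathologies flagged in Remark~\ref{rem:catastrofe}.

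The heart of the matter is a uniform bound on $\tv(\Psi(\rho^\nu))$ matching~\eqref{eq:BVdepen}. The pointwise bound $\rho^\nu\in[0,R]$ is granted by the invariant--region structure of the Riemann solvers. For the variation I work, as in~\eqref{eq:psi}, with $\Psi\circ\rho^\nu$ and not with $\rho^\nu$: the Temple--type structure built into $\Psi$ makes $\tv(\Psi(\rho^\nu))$ nonincreasing across classical interactions, while interactions at $x=0$ with a \emph{fixed} frozen constraint add a total amount bounded by $4f(\bar\rho)$, since $\Psi$ ranges in $[-f(\bar\rho),f(\bar\rho)]$ and the nonclassical shock jump equals $2\bigl(f(\bar\rho)-q_n\bigr)$. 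The only genuinely new contribution is the reset of $q_n$: at each splitting time the stationary shock reconfigures and emits waves whose $\Psi$--variation is controlled by $\modulo{q_n-q_{n-1}}$. I would then estimate $\sum_n\modulo{q_n-q_{n-1}}\le\lip(p)\sum_n\modulo{\xi^\nu(t_n)-\xi^\nu(t_{n-1})}$ and bound the total time--variation of $\xi^\nu$ by differentiating the definition~\eqref{eq:xi} of $\xi$ along the conservation law~\eqref{eq:constrianed1}, so as to reproduce the linear--in--$t$ term with constant $C=2\lip(p)\norma{w}_{\L\infty(\reals_-;\reals)}$. This delivers~\eqref{eq:BVdepen} uniformly in $\nu$ and $N$.

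Estimate~\eqref{eq:BVdepen2} is then a finite speed of propagation bookkeeping: between times $s$ and $t$ every front of $\rho^\nu$ moves at most $\lip(f)\,\modulo{t-s}$, so the $\L1$ displacement of $\Psi(\rho^\nu)$ is at most $\lip(f)\,\modulo{t-s}\,\tv(\Psi(\rho^\nu))\le\lip(\Psi)\,C_T\,\modulo{t-s}$, using $\lip(\Psi)=\lip(f)$. With the space bound on $\Psi(\rho^\nu)$ and this time--Lipschitz bound in hand I apply Helly's theorem to the sequence $\Psi(\rho^\nu)$, extract a subsequence converging a.e., and recover $\rho^\nu\to\rho$ in $\Lloc1$ since $\Psi^{-1}$ is continuous; the limit then satisfies $\rho(t)\in\mathcal{D}$ and the two estimates by lower semicontinuity of the total variation. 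The frozen constraint passes to the limit for free: because $w$ is bounded with support in $[-\iw,0]$ and $\rho^\nu\to\rho$ in $\L1([-\iw,0])$, one gets $\xi^\nu(t)\to\xi(t)$ and $q^\nu(t)\to p(\xi(t))$, so that~\eqref{eq:entropysol5} survives; here I must also verify that the splitting error, namely the gap on each slab between the frozen $q_n$ and $p(\xi^\nu(t))$, vanishes as $\Delta t\to0$, which again follows from the time--continuity estimate. Passing to the limit in~\eqref{eq:entropysol1}--\eqref{eq:entropysol3} is routine from the a.e.\ convergence of $\rho^\nu$ and the $\L1$ convergence of $q^\nu$, and the trace inequality~\eqref{eq:entropysol4} is recovered exactly as in the ``Existence in $\L\infty$'' step, through the weak $\L1$ convergence of the flux traces given by Lemma~\ref{lem:star} and the Rankine--Hugoniot identity $f(\rho(\cdot,0-))=f(\rho(\cdot,0+))$.

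I expect the main obstacle to be the total variation estimate, and precisely the bookkeeping that turns the reconfigurations of the nonclassical shock at the $O(1/\Delta t)$ resets into the sharp linear bound $C\,t$: one has to show that these increments neither blow up as $\Delta t\to0$ nor depend on the inner mesh $\nu$, and that the interactions with the exit occurring between two resets do not secretly spend more than the $4f(\bar\rho)$ budget.
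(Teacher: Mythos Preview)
Your plan is the paper's plan: freeze the constraint by operator splitting, run the Colombo--Goatin wave--front tracking on each slab, control $\tv(\Psi\circ\rho)$ by a Temple functional of the form $\tv(\Psi)+\gamma$, extract a limit by Helly on $\Psi(\rho^\nu)$, and pass to the limit in~\eqref{eq:entropysol} using Lemma~\ref{lem:star} for the trace condition. The one structural difference is that the paper also discretizes $p$ to a step function $p^h$ taking values in the flux mesh $f(\mathcal{M}^n)$, and ties the splitting step to that discretization through the CFL--type choice $\Delta t_h=\bigl(2^{h+1}w(0-)\lip(p)\bigr)^{-1}$. This buys two things you are silently assuming. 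First, the frozen levels $q_n$ land on the mesh, so the constrained Riemann solver at $x=0$ emits states $\hat\rho(q_n),\check\rho(q_n)\in\mathcal{M}^n$ and the front tracking remains exact on a fixed finite state set; with your undiscretized $p$ the levels $q_n$ are generically off--mesh, and you must argue that the scheme still terminates (it does, since each slab adds only two new states, but you should say so or round $q_n$ to the mesh). Second, the CFL choice quantizes $\modulo{q_n-q_{n-1}}$ to either $0$ or $2^{-h}f(\bar\rho)$ (Lemma~\ref{lem:capraEcavoli}), which is what makes the reset cases in the interaction estimate (Proposition~\ref{prop:interactionbound}) tractable. Your direct route via $\sum_n\modulo{q_n-q_{n-1}}\le\lip(p)\sum_n\modulo{\xi^\nu(t_n)-\xi^\nu(t_{n-1})}\le\lip(p)\,f(\bar\rho)\,w(0-)\,t$ reaches the same linear bound and is arguably cleaner, provided you redo the case analysis at a reset with an arbitrary jump size $\modulo{q_n-q_{n-1}}$ in place of $2^{-h}f(\bar\rho)$. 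One small correction: the constant you quote for~\eqref{eq:BVdepen} is the stability constant from part~(i) of Theorem~\ref{thm:1}; the variation estimate necessarily picks up a factor $f(\bar\rho)$ from the bound on $\dot\xi$, and the paper obtains $C=10\,w(0-)\lip(p)f(\bar\rho)$ in Corollary~\ref{cor:estups2}.
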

The solution $[t \mapsto \rho(t)]$ is the limit (along a subsequence) of a sequence obtained by combining the wave--front tracking algorithm and the operator splitting method. In the following subsections we describe the construction in full details.

\subsection{Approximation of flux and efficiency functions}\label{sec:technicalCauchy2}

Fix $h,n \in \naturals$ sufficiently large with $n \gg h$. Introduce the mesh
\begin{align*}
    \mathcal{M}^n = f^{-1} \left(2^{-n} f(\bar\rho) \naturals \cap \left[0,  f(\bar\rho)\right] \right)
\end{align*}
and the set
\begin{align*}
    \mathcal{D}^n &= \mathcal{D} \cap \PC\left(\reals; \mathcal{M}^n \right) ~,
\end{align*}
where $\PC\left(\reals; \mathcal{M}^n \right)$ is the set of piecewise constant functions defined on $\reals$, taking values in $\mathcal{M}^n$ and with a finite number of jumps. Approximate the flux $f$ with a piecewise linear, continuous flux $f^n ~\colon~ [0,R] \to [0,f(\bar\rho)]$, whose derivative exists in $[0,R] \setminus \mathcal{M}^n$ and such that $f^n$ coincides with $f$ on $\mathcal{M}^n$, see Fig.~\ref{fig:ph}, left. Clearly, $f^n$ satisfies condition~\textbf{(F)}. Consider $p^{-1}\left( f(\mathcal{M}^h) \cap p([0,R]) \right) = \{ \tilde\xi^h_0, \ldots, \tilde\xi^h_{m_h+2} \}$, with $0 \le \tilde\xi_{0}^h < \tilde\xi_{1}^h < \ldots < \tilde\xi_{m_h+2}^h \le R$, and observe that
\begin{align}\label{eq:LEinaudi0}
    \left(\tilde\xi_{i+1}^h - \tilde\xi_{i}^h\right) \lip(p) \ge p\left(\tilde\xi_{i+1}^h\right) - p\left(\tilde\xi_{i}^h\right) = 2^{-h} f(\bar\rho) ~.
\end{align}
\begin{figure}[htpb]
      \centering
        \includegraphics[width=.9\textwidth]{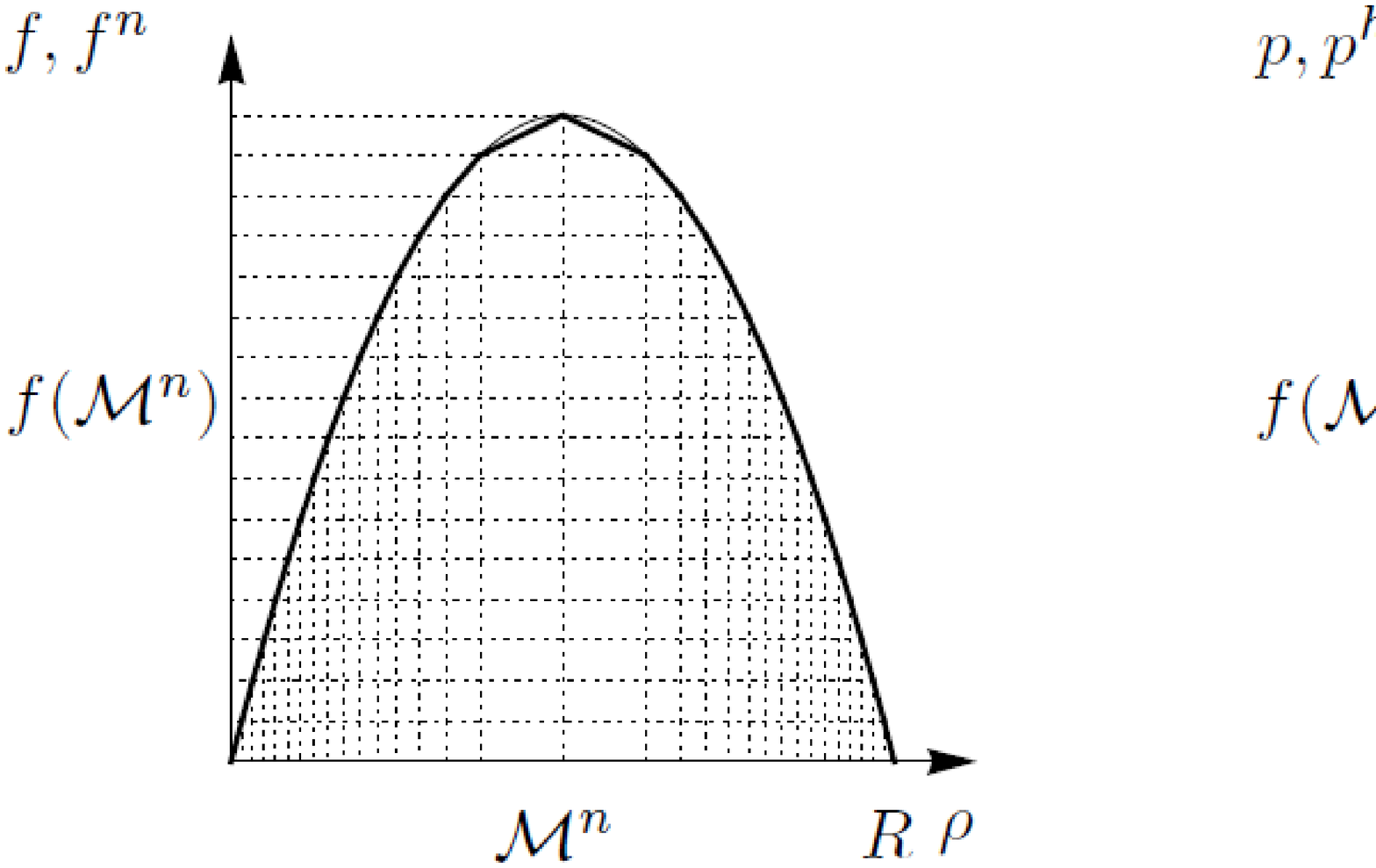}
      \caption{Left: : In thin line $f$ and in thick line the approximation $f^n$. Right: In thin line $p$ and in thick line the approximation $p^h$.}
\label{fig:ph}
\end{figure}
Approximate $p$ with the function $p^h \in \PC\left( [0,R]; f(\mathcal{M}^h) \right)$ defined as follows:
\begin{subequations}\label{eq:LEinaudi}
\begin{align}\label{eq:LEinaudi1}
    p^h(\xi) &= \sum_{i=0}^{m_h-1} p^h_i ~\caratt{\left[\xi_{i}^h, \xi_{i+1}^h\right[}(\xi) +
          p^h_{m_h} ~\caratt{\left[\xi_{m_h}^h, R\right]}(\xi)~,
\end{align}
where
\begin{align}\label{eq:LEinaudi2}
    0 &= \xi_{0}^h < \xi_{1}^h = \tilde\xi_{1}^h < \ldots < \xi_{m_h}^h = \tilde\xi_{m_h}^h < \xi_{m_h+1}^h =R ~,\\
    \label{eq:LEinaudi3}
    p^h_i &= p(\xi_{i+1}^h),~ i=0,\ldots,m_h-1,
    \hbox{ and }
    p^h_{m_h} = p(\tilde\xi_{m_h+1}^h) ~,
\end{align}
\end{subequations}
see Fig.~\ref{fig:ph}, right. Since $h<n$, we have that $f(\mathcal{M}^h) \subset f(\mathcal{M}^n)$, $p^h([0,R]) \subseteq f(\mathcal{M}^n) \setminus \{0\}$,
\begin{align}\label{eq:Looestimate4ph}
    \norma{p-p^h}_{\L\infty([0,R]; \reals)} \le 2^{1-h} f(\bar\rho)
\end{align}
and by~\eqref{eq:LEinaudi0} and~\eqref{eq:LEinaudi}
\begin{align}\label{eq:CFL0}
    p^h_i - p^h_{i+1} &= 2^{-h} f(\bar\rho) ~,
    \\ \label{eq:CFL1}
    \inf_{i=0,\ldots,m_h} \left(\xi_{i+1}^h - \xi_{i}^h\right) &\ge \frac{2^{-h} f(\bar\rho)}{\lip(p)} ~.
\end{align}

\subsection{The algorithm}\label{sec:Thealgorithm}

Now we can start with the construction of an approximating solution $[t \mapsto \rho^{n,h}(t)]$ to~\eqref{eq:constrianed}. As a first step we associate to any fractional time interval of the form $\left[\ell\Delta t_h, (\ell+1)\Delta t_h\right[$, $\Delta t_h>0$, $\ell \in \naturals$, a constrained Cauchy problem of the form~\eqref{eq:constrianedLOCAL} with constant constraint. Then the wave--front tracking algorithm gives us the corresponding exact solution $[t \mapsto \rho^{n,h}_{\ell+1}(t)]$. Finally, $\rho^{n,h}$ is obtained by gluing together $\rho^{n,h}_{\ell+1}$, $\ell \in \naturals$. The existence of a limit for $\rho^{n,h}$ as $n$ and $h$ go to infinity is ensured by the choice
\begin{align}\label{eq:CFL2}
    \Delta t_h &= \frac{1}{2^{h+1} w(0-) \lip(p)}~,
\end{align}
which will be motivated in the proof of Lemma~\ref{lem:capraEcavoli} by a sort of CFL~condition. Roughly speaking, this condition is needed to bound the possible jump in the value of the constraint due to the update at each fractional time $(\ell+1)\Delta t_h$, $\ell \in \naturals$.

Approximate $\rho_0$ with a piecewise constant function $\rho_0^n ~\colon~ \reals \to [0,R]$ that coincides with $\rho_0$ on $\mathcal{M}^n$ and such that $\norma{\rho_0^n}_{\L1(\reals;\reals)} \le \norma{\rho_0}_{\L1(\reals;\reals)}$ and $\tv\left( \Psi\left(\rho_0^n\right) \right) \le \tv\left( \Psi\left(\rho_0\right) \right)$. Clearly, $\rho_0^n$ belongs to $\mathcal{D}^n$. First consider the approximating constrained Cauchy problem
\begin{align*}
    \partial_t\rho + \partial_xf^n(\rho) &= 0 & (t,x) &\in \left]0, \Delta t_h\right[\times\reals\\
    f^n\left(\rho(t,0\pm)\right) &\le p^h\left( \Xi^{n}_{0} \right) & t &\in \left]0, \Delta t_h\right]\\
    \rho(0,x) &=\rho_0^n(x) & x &\in \reals ~,
\end{align*}
where
\begin{align*}
    \Xi^{n}_{0}
    &= \int_{\reals_-} w(x) ~\rho_0^n(x) ~{\d} x ~.
\end{align*}
The unique exact solution $[ t \mapsto \rho^{n,h}_1(t) ]$ for the above problem is obtained by piecing together the solutions to the Riemann problems at points where $\rho_0^n$ is discontinuous or where interactions take place, namely where two or more waves intersect, or one or more waves reach $x=0$. For the definition of solution of the Riemann problem with a piecewise linear, continuous flux away from the constraint, we refer to Sec.~6.1 in Ref.~\cite{BressanBook} or to Sec.~5.2 in Ref.~\cite{Rosinibook}. The definition of solution to the constrained Riemann problem along $x=0$ follows by the obvious adaptation of Definition~2.2 in Ref.~\cite{ColomboGoatinConstraint} to the case with a piecewise linear continuous flux, see also Sec.~6.3 in Ref.~\cite{Rosinibook}. The results of Theorem~3.4 in Ref.~\cite{ColomboGoatinConstraint} can be easily generalized to the case with piecewise linear continuous flux and, therefore, we can define
\begin{align*}
    \rho^{n,h}(t,x) & = \rho^{n,h}_1(t,x) & \hbox{ for }(t,x) \in \left]0, \Delta t_h\right] \times \reals~.
\end{align*}
We can assume that no interaction occurs at time $t  = \Delta t_h$, see assumption~\textbf{H2} below. Then the approximate solution is prolonged beyond $t = \Delta t_h$ by taking
\begin{align*}
    \rho^{n,h}(t,x) & = \rho^{n,h}_2(t-\Delta t_h,x) & \hbox{ for }(t,x) \in \left]\Delta t_h, 2\Delta t_h\right] \times \reals~,
\end{align*}
where $[ t \mapsto \rho^{n,h}_2(t) ]$ is the exact solution of the constrained Cauchy problem
\begin{align*}
    \partial_t\rho + \partial_xf^n(\rho) &= 0 & (t,x) &\in \left]0, \Delta t_h\right[\times\reals\\
    f^n\left(\rho(t,0\pm)\right) &\le p^h\left( {\Xi^{n,h}_{1}} \right) & t &\in \left]0, \Delta t_h\right]\\
    \rho(0,x) &=\rho_1^{n,h}(\Delta t_h,x) & x &\in \reals ~,
\end{align*}
with
\begin{align*}
    {\Xi^{n,h}_{1}} =
    \int_{\reals_-} w(x) ~\rho^{n,h}_{1}(\Delta t_h,x) ~{\d} x ~.
\end{align*}

We repeat this procedure at each fractional step and, once we get $[ t \mapsto \rho^{n,h}_\ell(t) ]$, we construct $[ t \mapsto \rho^{n,h}_{\ell+1}(t) ]$ by solving a constrained Cauchy problem of the form
\begin{subequations}\label{eq:constrianedapprk}
\begin{align}\label{eq:constrianedapprk1}
    \partial_t\rho + \partial_xf^n(\rho) &= 0 & (t,x) &\in \left]0, \Delta t_h\right[\times\reals\\
    \label{eq:constrianedapprk2}
    f^n\left(\rho(t,0\pm)\right) &\le p^h\left( {\Xi^{n,h}_{\ell}} \right) & t &\in \left]0, \Delta t_h\right]\\
    \label{eq:constrianedapprk3}
    \rho(0,x) &=\rho_\ell^{n,h}(\Delta t_h,x) & x &\in \reals ~,
\end{align}
where
\begin{align}
    \label{eq:constrianedapprk4}
    {\Xi^{n,h}_{\ell}} &=
    \int_{\reals_-} w(x) ~\rho^{n,h}_{\ell}(\Delta t_h,x) ~{\d} x ~.
\end{align}
\end{subequations}
We stress that the solution to~\eqref{eq:constrianedapprk} is unique and that the efficiency at the exit may change at each time $t \in \Delta t_h\naturals$ and only there.

To simplify the wave--front tracking algorithm, see Remark~7.1 in Ref.~\cite{BressanBook}, it is standard to remark that, without loss of generality, one can assume that:
\begin{enumerate}[H1]
  \item[\textbf{H1}] At any interaction either exactly two waves interact, or a single wave reaches the constraint $x=0$.
  \item[\textbf{H2}] No interaction occurs at time $t \in \Delta t_h \naturals$.
\end{enumerate}
In this way we construct
\begin{align}\label{eq:appreff}
    \Xi^{n,h}(t) &= \sum_{\ell\in\naturals} {\Xi^{n,h}_ \ell} ~\caratt{\left[ \ell\Delta t_h, (\ell+1) \Delta t_h \right[} (t)
\end{align}
and an approximate solution of the Cauchy problem~\eqref{eq:constrianed}
\begin{align}\label{eq:apprsol}
    \rho^{n,h}(t,x) &= \sum_{\ell\in\naturals} \rho^{n,h}_{\ell+1}(t-\ell\Delta t_h,x) ~\caratt{\left]\ell\Delta t_h, (\ell+1)\Delta t_h\right]} (t)~,
\end{align}
where $[ t \mapsto \rho^{n,h}_{\ell+1}(t) ]$ is the unique solution to~\eqref{eq:constrianedapprk}.

Roughly speaking, the present procedure consists in the application of two operators, $\Theta$ and $S$, at each fractional step $\left]\ell\Delta t_h, (\ell+1)\Delta t_h\right]$, $\ell \in \naturals$. The first operator gives ${\Xi^{n,h}_{\ell}} = \Theta[ \rho_\ell^{n,h}(\Delta t_h) ]$, while the second operator gives the solution $\rho_{\ell+1}^{n,h} = S [ \rho_\ell^{n,h}(\Delta t_h), {\Xi^{n,h}_{\ell}} ]$ of the constrained Cauchy problem of the form~\eqref{eq:constrianedapprk}, with $[x \mapsto \rho_\ell^{n,h}(\Delta t_h,x)]$ as initial datum and with $p^h({\Xi^{n,h}_{\ell}})$ as constraint.

More rigorously, for any $\rho^{n}_0 \in \mathcal{D}^n$ and $t \in \reals_+$, define recursively
\begin{align*}
    F^{n,h}[\rho^{n}_0](t) = S \left[\rho^{n}_0, \Theta\left[\rho^{n}_0\right]\right](t)
\end{align*}
if $t \in \left[0,\Delta t_h\right]$, and, if $t \in \left](\ell+1)\Delta t_h,(\ell+2)\Delta t_h\right]$, $\ell \in \naturals$, then
\begin{align*}
      F^{n,h}[\rho^{n}_0](t) =
      S\left[ F^{n,h}[\rho^{n}_0]\left((\ell+1) \Delta t_h\right),
      \Theta\left[F^{n,h}[\rho^{n}_0]\left((\ell+1) \Delta t_h\right) \right] \right](t) ~.
\end{align*}

\subsection{\textit{A priori} estimates}

In this section we prove that $\rho^{n,h}(t) = F^{n,h}[\rho^{n}_0](t)$ is in $\mathcal{D}^n$ on any bounded time interval $[0,T]$, $T>0$, and we estimate $\tv\left( \Psi\left( \rho^{n,h}(t) \right) \right)$ uniformly in $n$, $h$ and $t$.
To this aim, we introduce the following Temple functional
\begin{align}\label{eq:functionals}
    \Upsilon^{n,h}_T\left(t\right) &=
    \tv\left(\Psi\left(\rho^{n,h}(t)\right)\right)
    + \gamma^h \left(\rho^{n,h}(t),\Xi^{n,h}(t)\right)
    + \Gamma^h_T \left(t\right) ~,
\end{align}
with
\begin{align*}
    \gamma^h \left(\rho,\Xi\right) &= \left\{
    \begin{array}{l@{\qquad}l}
      0 &
        \begin{array}{l}
        \hbox{if }\rho(0-) > \bar\rho > \rho(0+) \hbox{ and}\\
        f^n\left(\rho(0\pm)\right) = p^h\left( \Xi(t) \right)
        \end{array}\\[10pt]
      4 \left[ f(\bar\rho) - p^h\left( \Xi \right)\right]
      &
        \begin{array}{l}
            \hbox{otherwise,}
        \end{array}
    \end{array}
    \right.\\
    \Gamma^h_T \left(t\right) &=
    5 \cdot2^{-h} ~f(\bar\rho) \left[ \frac{T}{\Delta t_h} - \left\lfloor \frac{t}{\Delta t_h} \right\rfloor \right],
\end{align*}
where $\lfloor\cdot\rfloor ~\colon~ \reals \to \integers$ denotes the floor function. Recall that the Temple functional adopted in Ref.~\cite{ColomboGoatinConstraint} involves the total variation of the approximating constraint, which is given \textit{a priori}. In our construction, at each fractional time interval we are dealing with a different approximating problem~\eqref{eq:constrianedapprk} and we need to know the solution at the previous step in order to fix the value of the constraint in~\eqref{eq:constrianedapprk2}. Therefore, the constraint $p\left(\Xi^{n,h}(t)\right)$, $t\in\reals_+$, and its total variation are not given \textit{a priori}. Nevertheless, due to the choice of $\Delta t_h$, we are able to bound the possible jump of $p\left(\Xi^{n,h}(t)\right)$ at each time step, as we will see in Lemma~\ref{lem:capraEcavoli}, and estimate \textit{a priori} the total variation of the efficiency. From this point of view, the functional $\Upsilon^{n,h}_T$ is the natural generalization of that one used in Ref.~\cite{ColomboGoatinConstraint}. In fact, the two functionals have in common the first two terms, namely
\begin{align}\label{eq:fanq}
    \mathcal{Q}^{n,h}\left(t\right) = \tv\left(\Psi\left(\rho^{n,h}(t)\right)\right) + \gamma^h \left(\rho^{n,h}(t),\Xi^{n,h}(t)\right),
\end{align}
while $\Gamma^h_T \left(t\right)$ is introduced to control the total variation of $p\left(\Xi^{n,h}(\cdot)\right)$ in the time interval $[t,T]$.

\begin{lemma}\label{lem:capraEcavoli}
    For any $\ell \in \naturals$, the jump in the efficiency at time $t = (\ell+1) \Delta t_h$, namely $\modulo{p^h(\Xi^{n,h}_{\ell+1}) - p^h(\Xi^{n,h}_{\ell})}$, is either zero or $2^{-h} f(\bar\rho)$.
\end{lemma}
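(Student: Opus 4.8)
The plan is to show that between consecutive fractional times the weighted average density $\Xi^{n,h}$ cannot move far enough for $p^h$ to skip a level. The key quantitative input is the CFL-type choice of time step $\Delta t_h = \left(2^{h+1} w(0-) \lip(p)\right)^{-1}$ in~\eqref{eq:CFL2}, combined with the fact that the approximate flux $f^n$ is bounded by $f(\bar\rho)$ and that the discretized efficiency $p^h$ takes values on a grid with spacing exactly $2^{-h} f(\bar\rho)$, see~\eqref{eq:CFL0}.

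First I would estimate the variation of $\Xi^{n,h}$ across one fractional step. Using the evolution of $\xi$ recorded after~\eqref{eq:xi} (or, at the discrete level, differentiating $\Xi^{n,h}(t) = \int_{\reals_-} w(x)\,\rho^{n,h}(t,x)\,\d x$ and invoking the conservation law~\eqref{eq:constrianedapprk1} together with integration by parts), the time derivative of $\Xi^{n,h}$ on the interval $\left]\ell\Delta t_h,(\ell+1)\Delta t_h\right[$ is controlled by $\int_{\reals_-} \dot w(x)\left[f^n(\rho) - f^n(\rho(t,0-))\right]\d x$. Since $w$ is increasing with $\norma{w}_{\L1}=1$ and $0 \le f^n \le f(\bar\rho)$, the relevant quantity $\int_{\reals_-}\dot w$ equals $w(0-)$, so the integrand is bounded in absolute value by $w(0-)\,f(\bar\rho)$ (the flux differences being at most $f(\bar\rho)$ in magnitude). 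Integrating over a single time step of length $\Delta t_h$ yields
\begin{align*}
    \modulo{\Xi^{n,h}_{\ell+1} - \Xi^{n,h}_{\ell}} \le w(0-)\,f(\bar\rho)\,\Delta t_h = \frac{f(\bar\rho)}{2^{h+1}\lip(p)} = \frac{1}{2}\cdot\frac{2^{-h}f(\bar\rho)}{\lip(p)} ~.
\end{align*}

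Next I would compare this displacement with the grid spacing from~\eqref{eq:CFL1}, namely $\inf_i (\xi^h_{i+1}-\xi^h_i) \ge 2^{-h}f(\bar\rho)/\lip(p)$. The bound just obtained shows the change in $\Xi^{n,h}$ over one step is at most \emph{half} a cell width. Consequently $\Xi^{n,h}_{\ell}$ and $\Xi^{n,h}_{\ell+1}$ either lie in the same cell of the partition in~\eqref{eq:LEinaudi2}, in which case $p^h(\Xi^{n,h}_{\ell+1}) = p^h(\Xi^{n,h}_{\ell})$ and the jump is zero, or they lie in two adjacent cells, in which case by~\eqref{eq:CFL0} the jump equals exactly $2^{-h}f(\bar\rho)$. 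Jumping over an entire cell would require a displacement of at least one full cell width, which is excluded; this is the crux of the argument and the reason the factor $2^{h+1}$ (rather than $2^h$) appears in the definition of $\Delta t_h$.

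The main obstacle is making the flux-balance estimate rigorous at the discrete wave-front-tracking level rather than the formal PDE level. Because $\rho^{n,h}$ is only piecewise constant with finitely many jumps and the trace $\rho^{n,h}(t,0-)$ may itself change when waves hit the constraint, I would justify the derivative bound by working directly with the integrated conservation law on $\reals_-$, writing $\frac{\d}{\d t}\int_{\reals_-} w(x)\rho^{n,h}\,\d x$ as a sum over the moving discontinuities and using Rankine--Hugoniot together with $w(0-) = \norma{w}_{\L\infty}$; the monotonicity and compact support of $w$ from~\textbf{(W)} guarantee all boundary contributions telescope into the stated bound. Once this one-step displacement estimate is secured, the dichotomy \emph{zero or} $2^{-h}f(\bar\rho)$ follows immediately from the two-sided control~\eqref{eq:CFL1} and the uniform spacing~\eqref{eq:CFL0}.
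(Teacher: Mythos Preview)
Your proposal is correct and follows essentially the same route as the paper: bound $\modulo{\Xi^{n,h}_{\ell+1}-\Xi^{n,h}_\ell}$ by $\Delta t_h\,f(\bar\rho)\,w(0-)$, then invoke~\eqref{eq:CFL2} and~\eqref{eq:CFL1} to see that this displacement is strictly smaller than the minimal cell width, so $p^h$ can jump at most once and~\eqref{eq:CFL0} gives the dichotomy. The only difference is in how the displacement bound is made rigorous: the paper tests the weak formulation~\eqref{eq:rhoelleweaksol} against a product of cutoffs and a mollified weight $w_\nu=w*\eta_\nu$, passes to the limit to obtain~\eqref{eq:bellissima}, and then lets $\nu\to\infty$; you instead invoke the ODE form of $\dot\xi$ (which needs the mollification of $w$ to be made precise, since~\textbf{(W)} only gives $w\in\L\infty$) or propose a direct Rankine--Hugoniot bookkeeping on the finitely many fronts, which is an equally valid route for the piecewise constant $\rho^{n,h}$.
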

\begin{proof}
    Fix $\ell \in \naturals$. If $\displaystyle \modulo{\Xi^{n,h}_{\ell+1} - \Xi^{n,h}_\ell} < \inf_{i=0,\ldots,m_h}\modulo{\xi_{i+1}^h - \xi_{i}^h}$, then $[\xi \mapsto p^h(\xi)]$ has at most one jump for $\xi$ between $\Xi^{n,h}_{\ell+1}$ and $\Xi^{n,h}_{\ell}$ and~\eqref{eq:CFL0} allows us to conclude. Because of~\eqref{eq:CFL1}, we just need to show
    \begin{align*}
        \modulo{\Xi^{n,h}_{\ell+1} - \Xi^{n,h}_\ell} \lip(p) < 2^{-h} f(\bar\rho) ~.
    \end{align*}
    By Proposition~\ref{prop:ws}, $\rho^{n,h}_{\ell+1}$ is a weak solution of the problem~\eqref{eq:constrianedapprk1}, \eqref{eq:constrianedapprk3} with $\rho^{n,h}_{\ell}(\Delta t_h)$ as initial condition. Then, for any $\phi$ in $\Cc1(\reals^2;\reals)$ we have
    \begin{align}\label{eq:rhoelleweaksol}
        &\int_{\reals_+}\int_{\reals} \left[ \rho^{n,h}_{\ell+1} ~\partial_t\phi + f(\rho^{n,h}_{\ell+1}) ~\partial_x\phi \right] {\d}x ~{\d}t
        +\int_{\reals} \rho^{n,h}_{\ell}(\Delta t_h,x) ~\phi(0,x) ~{\d}x =0 ~.
    \end{align}
    Let $(\eta_\nu)_\nu$ be a standard family of mollifiers and define $w_\nu=w*\eta_\nu$. Let $\delta_\varepsilon$ be as in~\eqref{eq:delta}. Take $0\le t_1 < t_2 \le \Delta t_h$ and consider the test function
    \begin{align*}
        \phi(t,x) &=
        \left[\int_{t-t_2+\varepsilon}^{t-t_1} \delta_\varepsilon(z) ~{\d}z\right]
        \left[\int_{x+\varepsilon}^{x+\iw} \delta_\varepsilon(z) ~{\d}z\right]
        w_\nu(x) ~.
    \end{align*}
    Observe that $\phi(0,\cdot) \equiv 0$ and that letting $\varepsilon$ go to zero we get
    \begin{align*}
        \partial_t\phi(t,x) &\to [\delta^D_{t_1}(t) - \delta^D_{t_2}(t)] ~\caratt{[-\iw,0]}(x) ~w_\nu(x) ~,\\
        \partial_x\phi(t,x) &\to \caratt{[t_1,t_2]}(t) ~[\delta^D_{-\iw+}(x) - \delta^D_{0-}(x)] ~w_\nu(x) ~.
    \end{align*}
    We pass to the limit in the Eq.~\eqref{eq:rhoelleweaksol} letting $\varepsilon$ go to zero and we obtain
    \begin{align*}
        &\int_{-\iw}^0 w_\nu(x) \left[ \rho^{n,h}_{\ell+1}(t_1,x) - \rho^{n,h}_{\ell+1}(t_2,x) \right] ~{\d}x\\
        =& \int_{t_1}^{t_2}\left[ w_\nu(0-)~ f\left(\rho^{n,h}_{\ell+1}(t,0-)\right)
        - w_\nu(-\iw+)~f\left(\rho^{n,h}_{\ell+1}(t,-\iw+)\right) \right] {\d}t ~.
    \end{align*}
    Then, as $\nu$ goes to infinity we get
    \begin{align}\label{eq:bellissima}
        \modulo{\int_{-\iw}^0 w(x) \left[\rho^{n,h}_{\ell+1}(t_1,x) - \rho^{n,h}_{\ell+1}(t_2,x)\right] ~{\d}x}
        \le (t_2-t_1) f(\bar\rho) w(0-) ~.
    \end{align}
    By~\eqref{eq:constrianedapprk3} and~\eqref{eq:constrianedapprk4}  we have that
    \begin{align*}
        \modulo{\Xi^{n,h}_{\ell+1} - \Xi^{n,h}_\ell}
        =& \modulo{\int_{\reals_-} w(x) \left[ \rho^{n,h}_{\ell+1}(\Delta t_h,x) - \rho^{n,h}_{\ell}(\Delta t_h,x)\right] {\d} x}\\
        =& \modulo{\int_{\reals_-} w(x) \left[ \rho^{n,h}_{\ell+1}(\Delta t_h,x) - \rho^{n,h}_{\ell+1}(0,x)\right] {\d} x}
        \le \Delta t_h ~f(\bar\rho) ~w(0-)~.
    \end{align*}
    Therefore by~\eqref{eq:CFL2} the proof is complete.
\end{proof}

We are ready to show that $\Upsilon^{n,h}_T$ is a Temple functional.

\begin{proposition}\label{prop:interactionbound}
    Let $h, n \in \naturals$ and $\rho^{n}_0 \in \mathcal{D}^n$. On $[0,T]$, the map $[t \mapsto \Upsilon^{n,h}_T\left(t\right) ]$ is non--increasing and it decreases by at least $2^{-n}f(\bar\rho)$ each time the number of waves increases.
\end{proposition}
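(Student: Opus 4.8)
The plan is to track $\Upsilon^{n,h}_T$ across the only two kinds of events in the construction of $\rho^{n,h}$: the wave interactions in the interior of a fractional interval $\left]\ell\Delta t_h,(\ell+1)\Delta t_h\right[$ and the constraint updates at the times $t\in\Delta t_h\,\naturals$. In the interior of a step the constraint $p^h(\Xi^{n,h}_\ell)$ and the floor $\lfloor t/\Delta t_h\rfloor$ are constant, so $\Gamma^h_T$ is frozen and it suffices to control the reduced functional $\mathcal{Q}^{n,h}$ of~\eqref{eq:fanq}; across an update the profile $\rho^{n,h}$ is continuous in $\Lloc1$ while $\Gamma^h_T$ drops by exactly $5\cdot2^{-h}f(\bar\rho)$ and the constraint entering $\gamma^h$ may switch. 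I would estimate the jump of $\Upsilon^{n,h}_T$ at each type of event separately and conclude by summing.

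At an interior interaction I would invoke the interaction estimates behind Theorem~3.4 in Ref.~\cite{ColomboGoatinConstraint}, transcribed to the piecewise linear flux $f^n$. Away from $x=0$ the terms $\gamma^h$ and $\Gamma^h_T$ are untouched, the Temple variable $\Psi$ of~\eqref{eq:psi} keeps $\tv(\Psi(\rho^{n,h}))$ non--increasing, and the number of fronts does not increase, so $\mathcal{Q}^{n,h}$ is non--increasing there; since the states lie in $\mathcal{M}^n$, each front carries a $\Psi$--strength that is a positive multiple of $2^{-n}f(\bar\rho)$. The front number can grow only when a front reaches the constraint and a nonclassical discontinuity is produced: then $\gamma^h$ falls from $4[f(\bar\rho)-p^h(\Xi^{n,h}_\ell)]$ to $0$, while the $\Psi$--jump of the new nonclassical front equals $2[f(\bar\rho)-p^h(\Xi^{n,h}_\ell)]$, so the drop of $\gamma^h$ absorbs the growth of $\tv(\Psi)$ coming from the nonclassical front and the emitted classical fronts. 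Inspecting the finitely many patterns, and using the quantization in units of $2^{-n}f(\bar\rho)$, one sees that $\mathcal{Q}^{n,h}$ then decreases by at least $2^{-n}f(\bar\rho)$.

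The core of the proof is the update $t=(\ell+1)\Delta t_h$, where $\Gamma^h_T$ releases exactly $5\cdot2^{-h}f(\bar\rho)$ and, by Lemma~\ref{lem:capraEcavoli}, the constraint either stays put or jumps by one mesh unit $2^{-h}f(\bar\rho)$. If it stays put, $\tv(\Psi)$ and $\gamma^h$ do not move and $\Upsilon^{n,h}_T$ drops by the full $5\cdot2^{-h}f(\bar\rho)$. If it jumps, I would re--solve the constrained Riemann problem at $x=0$ with the continuous trace as datum and the updated constraint, and bound the growth of $\mathcal{Q}^{n,h}$ by a short case analysis. The extreme case is a nonclassical discontinuity that persists while the constraint tightens by $2^{-h}f(\bar\rho)$: its $\Psi$--jump grows by $2\cdot2^{-h}f(\bar\rho)$ and two side fronts of total $\Psi$--variation $2\cdot2^{-h}f(\bar\rho)$ are emitted, so $\tv(\Psi)$ rises by $4\cdot2^{-h}f(\bar\rho)$ with $\gamma^h$ unchanged; in the complementary classical case $\gamma^h$ rises by $4\cdot2^{-h}f(\bar\rho)$ while no front is created. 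In all cases $\mathcal{Q}^{n,h}$ grows by at most $4\cdot2^{-h}f(\bar\rho)$, the drop of $\Gamma^h_T$ leaves a net decrease of at least $2^{-h}f(\bar\rho)\ge 2^{-n}f(\bar\rho)$, and since $n\gg h$ this covers also the quantitative decrease when the update creates new fronts.

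I expect the main obstacle to be exactly this update step: the exhaustive enumeration of the constrained Riemann patterns — constraint rising or falling, with a nonclassical discontinuity created, persisting, or removed — and the verification that in each pattern the combined growth of $\tv(\Psi)$ and $\gamma^h$ stays below the $5\cdot2^{-h}f(\bar\rho)$ supplied by $\Gamma^h_T$; this is precisely what fixes the coefficient $5$ in the definition~\eqref{eq:functionals} of $\Upsilon^{n,h}_T$. The interior interactions, by contrast, are a routine transcription of the estimates of Ref.~\cite{ColomboGoatinConstraint} to the piecewise linear flux, where the quantization of strengths in units of $2^{-n}f(\bar\rho)$ yields the claimed decrease.
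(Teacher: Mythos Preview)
Your proposal is correct and follows essentially the same route as the paper's proof: both split the analysis into the update events at $t\in\Delta t_h\,\naturals$ (where $\Gamma^h_T$ drops by $5\cdot2^{-h}f(\bar\rho)$ and Lemma~\ref{lem:capraEcavoli} bounds the constraint jump) and the interior wave interactions (handled via the Colombo--Goatin interaction estimates for the frozen-constraint problem, transcribed to the piecewise linear flux $f^n$). Your identification of the worst-case growth $\Delta\mathcal{Q}^{n,h}\le 4\cdot2^{-h}f(\bar\rho)$ at updates, and the mechanism by which the drop of $\gamma^h$ absorbs the $\tv(\Psi)$--growth when a nonclassical front is created at interior interactions, match the paper's explicit case-by-case computations in Sec.~\ref{sec:6.1}.
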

\noindent The proof is deferred to Sec.~\ref{sec:6.1}.

In the next corollary we rely on Proposition~\ref{prop:interactionbound} to prove a uniform estimate on $\tv\left( \Psi\left( \rho^{n,h}(t) \right) \right)$.

\begin{corollary}\label{cor:estups2}
    There exists a constant $C>0$, that does not depend on $n$ or $h$, such that for all $t >0$
    \begin{align}\label{eq:etimateBV}
        \tv\left( \Psi\left( \rho^{n,h}(t) \right) \right)
        &\le \tv\left( \Psi\left( \rho_0 \right) \right) + 4f(\bar\rho) + C ~t ~.
    \end{align}
\end{corollary}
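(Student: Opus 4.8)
The plan is to read the bound off directly from the monotonicity of the Temple functional $\Upsilon^{n,h}_T$ supplied by Proposition~\ref{prop:interactionbound}. Since that proposition guarantees that $[t\mapsto\Upsilon^{n,h}_T(t)]$ is non--increasing on $[0,T]$, we have $\Upsilon^{n,h}_T(t)\le\Upsilon^{n,h}_T(0)$ for every $t\in[0,T]$. The whole task is then to isolate $\tv(\Psi(\rho^{n,h}(t)))$ from the definition~\eqref{eq:functionals} of $\Upsilon^{n,h}_T$ and to control the leftover terms by quantities independent of $n$ and $h$.

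The extraction must be done so as to keep the time dependence linear. From~\eqref{eq:functionals} one has $\tv(\Psi(\rho^{n,h}(t)))=\Upsilon^{n,h}_T(t)-\gamma^h(\rho^{n,h}(t),\Xi^{n,h}(t))-\Gamma^h_T(t)$. I would drop \emph{only} the nonnegative term $\gamma^h(\cdot,\cdot)\ge0$ while \emph{retaining} $\Gamma^h_T(t)$, so that
\begin{align*}
\tv(\Psi(\rho^{n,h}(t)))
&\le \Upsilon^{n,h}_T(t)-\Gamma^h_T(t)
\le \Upsilon^{n,h}_T(0)-\Gamma^h_T(t)\\
&= \tv(\Psi(\rho^{n}_0))+\gamma^h(\rho^{n}_0,\Xi^{n,h}(0))+\bigl[\Gamma^h_T(0)-\Gamma^h_T(t)\bigr].
\end{align*}
For the first term I use that $\rho^{n}_0$ is constructed with $\tv(\Psi(\rho^{n}_0))\le\tv(\Psi(\rho_0))$. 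For the second, since $p^h\ge0$ the value of $\gamma^h$ is at most $4f(\bar\rho)$. For the third, the two boundary constants $T/\Delta t_h$ cancel in the telescoping difference, leaving only the floor:
\begin{align*}
\Gamma^h_T(0)-\Gamma^h_T(t)
= 5\cdot2^{-h}f(\bar\rho)\left\lfloor\frac{t}{\Delta t_h}\right\rfloor
\le 5\cdot2^{-h}f(\bar\rho)\,\frac{t}{\Delta t_h}.
\end{align*}

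The key point is that the factor $2^{-h}$ in $\Gamma^h_T$ is designed precisely to be absorbed by $1/\Delta t_h$. Substituting the CFL choice $\Delta t_h=1/(2^{h+1}w(0-)\lip(p))$ from~\eqref{eq:CFL2} gives $2^{-h}/\Delta t_h=2\,w(0-)\lip(p)$, so the last contribution is at most $10\,f(\bar\rho)\,w(0-)\,\lip(p)\,t$, a multiple of $t$ whose constant depends neither on $n$ nor on $h$. Collecting the three estimates yields~\eqref{eq:etimateBV} with, for instance, $C=10\,f(\bar\rho)\,w(0-)\,\lip(p)$. There is no genuine obstacle at this stage: all the hard work — showing that the interaction-induced increases of $\tv(\Psi(\rho^{n,h}))$ and of $\gamma^h$ at each fractional time are compensated by the downward jumps $5\cdot2^{-h}f(\bar\rho)$ of $\Gamma^h_T$, using Lemma~\ref{lem:capraEcavoli} to bound the efficiency jump by $2^{-h}f(\bar\rho)$ — is already encapsulated in Proposition~\ref{prop:interactionbound}, deferred to Sec.~\ref{sec:6.1}. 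The only care needed here is the bookkeeping that keeps $\Gamma^h_T(t)$ on the left, producing growth linear in $t$ rather than in $T$.
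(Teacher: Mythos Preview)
Your proof is correct and essentially the same as the paper's, arriving at the identical constant $C=10\,f(\bar\rho)\,w(0-)\,\lip(p)$. The only organizational difference is that the paper works with $\mathcal{Q}^{n,h}=\Upsilon^{n,h}_T-\Gamma^h_T$ and bounds its jumps at the fractional times, whereas you invoke the monotonicity of $\Upsilon^{n,h}_T$ directly and let the $T/\Delta t_h$ terms cancel in $\Gamma^h_T(0)-\Gamma^h_T(t)$; these are algebraically equivalent rearrangements of the same inequality.
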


\begin{proof}
    We consider the functional $\mathcal{Q}^{n,h} = \Upsilon^{n,h}_T - \Gamma^h_T$ introduced in~\eqref{eq:fanq}. Proceeding as in the proof of Proposition~\ref{prop:interactionbound}, we can show that $\mathcal{Q}^{n,h}$ may increase only at $t \in \Delta t_h \naturals$. However, since $\Upsilon^{n,h}_T$ is strictly decreasing at $t \in \Delta t_h \naturals$, we have that for all $\ell \in \naturals$,
    \begin{align*}
        \mathcal{Q}^{n,h}(\ell\Delta t_h+) - \mathcal{Q}^{n,h}(\ell\Delta t_h-) \le \modulo{\Gamma^h_T(\ell\Delta t_h+) - \Gamma^h_T(\ell\Delta t_h-)} = 5 \cdot 2^{-h} f(\bar\rho) ~.
    \end{align*}
    Therefore, by~\eqref{eq:CFL2}
    \begin{align*}
        \tv\left( \Psi\left( \rho^{n,h}(t) \right) \right) &\le  \mathcal{Q}^{n,h}(t)
        \le \mathcal{Q}^{n,h}(0) + 5 \cdot 2^{-h} f(\bar\rho) \left\lfloor\frac{t}{\Delta t_h}\right\rfloor\\
        &\le \tv\left( \Psi\left( \rho_0 \right) \right) + 4f(\bar\rho) + 10 ~w(0-) ~\lip(p) ~f(\bar\rho) ~t ~,
    \end{align*}
    and the estimate~\eqref{eq:etimateBV} holds with $C = 10 ~w(0-) ~\lip(p) ~f(\bar\rho)$.
\end{proof}

By the results proved in Ref.~\cite{ColomboGoatinConstraint}, the assumption~\textbf{H2} and the corollary above, we have that both $\rho^{n,h}(t)$ and $\Xi^{n,h}(t)$ are well defined for any $t \in \left[0, T\right]$ and that $\rho^{n,h}$ belongs to $\C0(\reals_+; \mathcal{D}^n)$. In particular, $[ t \mapsto \rho^{n,h}(t) ]$ is piecewise constant with discontinuities along finitely many polygonal lines with bounded speed of propagation, that do not intersect each other at any time $t\in\Delta t_h\naturals$. By the construction of $\rho^{n,h}$ and its continuity with respect to time it is not difficult to show

\begin{proposition}\label{prop:lallelarsson}
The map $[t\mapsto \rho^{n,h}(t)]$ given by~\eqref{eq:apprsol} is an entropy weak solution in the sense of Definition~\ref{def:entropysol} (with $f^n,p^h$ replacing $f,p$) to the problem
\begin{subequations}\label{eq:constrianedapprGLOBAL}
\begin{align}\label{eq:constrianedapprGLOBAL1}
    \partial_t\rho + \partial_xf^n(\rho) &= 0 & (t,x) &\in \reals_+\times\reals\\
    \label{eq:constrianedapprGLOBAL2}
    f^n\left(\rho(t,0\pm)\right) &\le p^h\left( {\Xi^{n,h}}(t) \right) & t &\in \reals_+\\
    \label{eq:constrianedapprGLOBAL3}
    \rho(0,x) &=\rho_0^{n}(x) & x &\in \reals ~,
\end{align}
\end{subequations}
where $[t\mapsto \Xi^{n,h}(t)]$ is given by~\eqref{eq:appreff}.
\end{proposition}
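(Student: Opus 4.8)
The plan is to verify directly conditions~\eqref{eq:entropysol1}--\eqref{eq:entropysol4} of Definition~\ref{def:entropysol}, with $f^n$ in place of $f$ and with the \textit{a priori} given constraint $q(t)=p^h\left(\Xi^{n,h}(t)\right)$ in lines~\eqref{eq:entropysol2} and~\eqref{eq:entropysol4}; since here $\Xi^{n,h}$ is prescribed by~\eqref{eq:appreff} rather than slaved to $\rho^{n,h}$, the problem~\eqref{eq:constrianedapprGLOBAL} is of the local type~\eqref{eq:constrianedLOCAL} and the relation~\eqref{eq:entropysol5} plays no role. The argument is a strip--by--strip verification followed by a gluing across the fractional times $t\in\Delta t_h\naturals$. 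On each interval $\left]\ell\Delta t_h,(\ell+1)\Delta t_h\right]$ the function $\rho^{n,h}$ coincides, up to the time shift, with $\rho^{n,h}_{\ell+1}$, which by construction is the unique entropy weak solution of the constant--constraint problem~\eqref{eq:constrianedapprk} furnished by the wave--front tracking algorithm (the results of Theorem~3.4 in Ref.~\cite{ColomboGoatinConstraint} being extended to the piecewise linear flux $f^n$). By definition~\eqref{eq:appreff} the value $\Xi^{n,h}(t)$ is frozen to $\Xi^{n,h}_\ell$ on this interval, so that $p^h\left(\Xi^{n,h}(t)\right)=p^h\left(\Xi^{n,h}_\ell\right)=:q_\ell$ is exactly the constraint satisfied by $\rho^{n,h}_{\ell+1}$; hence the constraint contribution in~\eqref{eq:entropysol2} and the trace bound~\eqref{eq:entropysol4} of the local and the global formulations agree on each strip.

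First I would record the strip--wise entropy production: for each fixed $k\in[0,R]$, in the interior of its time strip $\rho^{n,h}_{\ell+1}$ satisfies, in the sense of distributions, the Kru\v zkov inequality for $f^n$ with a nonnegative source concentrated on $\{x=0\}$ carrying the weight $2\left[1-q_\ell/f(\bar\rho)\right]f^n(k)$. Next I would glue these inequalities. The crucial point is that $\rho^{n,h}$ is continuous in time with values in $\Lloc1$ across every breakpoint $t=\ell\Delta t_h$, because $\rho^{n,h}_{\ell+1}(0,\cdot)=\rho^{n,h}_\ell(\Delta t_h,\cdot)$ by~\eqref{eq:constrianedapprk3}; consequently $\left[t\mapsto|\rho^{n,h}(t,x)-k|\right]$ has no jump there and the measure $\partial_t|\rho^{n,h}-k|+\partial_x\left[\sign(\rho^{n,h}-k)(f^n(\rho^{n,h})-f^n(k))\right]$ carries no atom on the slices $\{t=\ell\Delta t_h\}$. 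Hence the strip inequalities patch into a single distributional inequality on $\left]0,+\infty\right[\times\reals$, whose constraint source on $\{x=0\}$ is $2\left[1-p^h\left(\Xi^{n,h}(t)\right)/f(\bar\rho)\right]f^n(k)$. Testing against a nonnegative $\phi\in\Cc\infty(\reals^2;\reals_+)$ and accounting for the initial layer at $t=0$ in the usual way produces exactly~\eqref{eq:entropysol1}--\eqref{eq:entropysol3}, the surviving boundary term being $\int_\reals|\rho_0^n(x)-k|\,\phi(0,x)\,{\d}x$.

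It remains to check the trace condition~\eqref{eq:entropysol4}. On each strip the local construction gives $f^n\left(\rho^{n,h}_{\ell+1}(t,0\pm)\right)\le q_\ell$ for a.e.~$t$, and by assumption~\textbf{H2} the polygonal discontinuity lines of the piecewise constant profile do not meet the breakpoint lines $t=\ell\Delta t_h$, so the one--sided traces of $\rho^{n,h}$ at $x=0$ coincide with those of $\rho^{n,h}_{\ell+1}$; since $p^h\left(\Xi^{n,h}(t)\right)=q_\ell$ on this interval, this yields~\eqref{eq:entropysol4} for a.e.~$t$. Collecting the strips over $\ell\in\naturals$ establishes the claim on all of $\reals_+$. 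The only genuinely delicate point is the bookkeeping at the interfaces $t=\ell\Delta t_h$: one must ensure that refreezing the constraint from $q_{\ell-1}$ to $q_\ell$ is compatible between consecutive strips and that the time--continuity of the glued profile prevents any entropy production on those slices; everything else is inherited strip by strip from the wave--front tracking solutions.
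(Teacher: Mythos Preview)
Your proposal is correct and follows essentially the same approach as the paper: both use the strip--wise entropy inequalities for $\rho^{n,h}_{\ell+1}$ together with the time--continuity $\rho^{n,h}_{\ell+1}(0,\cdot)=\rho^{n,h}_\ell(\Delta t_h,\cdot)$ to glue across the breakpoints, and then verify~\eqref{eq:entropysol4} strip by strip. The only cosmetic difference is that the paper implements the gluing via an explicit time partition of unity built from mollifiers $\delta_\varepsilon$ (so that the boundary terms at $t=\ell\Delta t_h$ telescope upon summation and passage to the limit), whereas you phrase the same cancellation as the absence of atoms of the entropy measure on the slices $\{t=\ell\Delta t_h\}$.
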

\noindent The proof is deferred to Sec.~\ref{sec:lallelarsson}.

\begin{proposition}\label{prop:convergence}
    There exists a subsequence of $\rho^{n,h}$ converging a.e. on $\reals_+\times\reals$ to a limit $\rho \in \L\infty\left(\reals_+\times\reals; [0,R] \right)$. In addition, $\rho$ satisfies estimates~\eqref{eq:BVdepen} and~\eqref{eq:BVdepen2}.
\end{proposition}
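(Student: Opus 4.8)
The plan is to derive compactness from the uniform bounds already available on the \emph{nonlinear} quantity $\Psi(\rho^{n,h})$ rather than on $\rho^{n,h}$ itself, and then to transfer the resulting a.e.\ convergence back to $\rho^{n,h}$ through $\Psi^{-1}$. Recall from~\eqref{eq:psi} that $\Psi$ is a continuous, strictly increasing bijection of $[0,R]$ onto $[-f(\bar\rho),f(\bar\rho)]$ with $\lip(\Psi)=\lip(f)=M$, and that its inverse $\Psi^{-1}$ is continuous, the singularity of $\Psi$ at $\bar\rho$ only preventing $\Psi^{-1}$ from being Lipschitz. Hence $\Psi(\rho^{n,h}(t))$ is uniformly bounded in $\L\infty$, and by Corollary~\ref{cor:estups2} it satisfies, for every $t\in[0,T]$ and uniformly in $n,h$,
\begin{align*}
    \tv\bigl(\Psi(\rho^{n,h}(t))\bigr)\le C_T=\tv\bigl(\Psi(\rho_0)\bigr)+4f(\bar\rho)+C\,T .
\end{align*}
This furnishes the spatial compactness needed to apply a Helly--type theorem (see, e.g., Ref.~\cite{BressanBook}), the only missing ingredient being equicontinuity in time.

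First I would establish that $[t\mapsto\Psi(\rho^{n,h}(t))]$ is uniformly Lipschitz with values in $\L1$. By Proposition~\ref{prop:interactionbound} the number of interactions on $[0,T]$ is finite, and between consecutive interaction times $\rho^{n,h}(t)$ is piecewise constant with fronts travelling along straight lines whose speeds, being secant slopes of $f^n$, are bounded by $M=\lip(f)$; moreover $\rho^{n,h}$ is continuous across the splitting times $t\in\Delta t_h\naturals$, since the operator $\Theta$ only updates the constraint and leaves the profile unchanged. Estimating the $\L1$ time increment front by front, the contribution of a front with $\Psi$--jump $\modulo{\Delta\Psi_j}$ moving with speed $\sigma_j$ over a lapse $\modulo{t-s}$ is $\modulo{\Delta\Psi_j}\,\modulo{\sigma_j}\,\modulo{t-s}$; summing over all fronts and over the finitely many interaction subintervals gives
\begin{align*}
    \norma{\Psi(\rho^{n,h}(t))-\Psi(\rho^{n,h}(s))}_{\L1(\reals;\reals)}
    \le \modulo{t-s}\,\lip(\Psi)\,C_T ,
\end{align*}
which is exactly the approximate version of~\eqref{eq:BVdepen2}.

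With the uniform spatial $\tv$ bound and this temporal Lipschitz bound in hand, the standard compactness theorem yields a subsequence (not relabelled) such that $\Psi(\rho^{n,h})\to v$ in $\Lloc1(\reals_+\times\reals)$ and a.e., with $v(t)\in\BV$ and $\tv(v(t))\le C_t$ for a.e.\ $t$. Since $\Psi^{-1}$ is continuous, $\rho^{n,h}=\Psi^{-1}(\Psi(\rho^{n,h}))\to\rho:=\Psi^{-1}(v)$ a.e.\ on $\reals_+\times\reals$, and $\rho\in\L\infty(\reals_+\times\reals;[0,R])$ because each $\rho^{n,h}$ is $[0,R]$--valued. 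Identifying $v=\Psi(\rho)$, the lower semicontinuity of the total variation under $\L1$ convergence together with $\tv(\Psi(\rho_0^n))\le\tv(\Psi(\rho_0))$ gives~\eqref{eq:BVdepen} for a.e.\ $t$, whence $\rho(t)\in\mathcal{D}$ for a.e.\ $t$; passing the temporal bound above to the limit along the subsequence yields~\eqref{eq:BVdepen2}.

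The main obstacle is precisely the mismatch between the two objects: uniform $\BV$ control is available only for $\Psi(\rho^{n,h})$, whereas $\Psi^{-1}$ fails to be Lipschitz at $\bar\rho$, so the $\BV$ bounds cannot be pushed forward to $\rho^{n,h}$ and one cannot apply Helly to $\rho^{n,h}$ directly. The resolution is to extract compactness at the level of $\Psi(\rho^{n,h})$ and to exploit only the \emph{continuity} of $\Psi^{-1}$ to recover a.e.\ convergence of $\rho^{n,h}$; the estimates~\eqref{eq:BVdepen}--\eqref{eq:BVdepen2}, being stated for $\Psi(\rho)$, then survive in the limit by lower semicontinuity. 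A secondary delicate point is the uniform-in-$(n,h)$ time equicontinuity across the operator--splitting updates, which is secured by the finiteness of interactions (Proposition~\ref{prop:interactionbound}) together with the control of the constraint jumps in Lemma~\ref{lem:capraEcavoli}.
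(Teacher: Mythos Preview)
Your proposal is correct and shares the paper's overall architecture: establish uniform $\L\infty\cap\BV$ bounds and $\L1$--time Lipschitz continuity for $\Psi(\rho^{n,h})$, extract a convergent subsequence by Helly, and pull the a.e.\ limit back through the continuous map $\Psi^{-1}$.

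The one genuine difference lies in how the temporal Lipschitz bound is obtained. You argue directly at the level of the wave--front structure: between interactions each front contributes $\modulo{\Delta\Psi_j}\,\modulo{\sigma_j}\,\modulo{t-s}$ to the $\L1$ increment, and bounding $\modulo{\sigma_j}\le\lip(f)=\lip(\Psi)$ together with $\sum_j\modulo{\Delta\Psi_j}\le C_T$ gives~\eqref{eq:BVdepen2} for the approximants. The paper instead takes a measure--theoretic detour: it introduces the piecewise linear $\Psi^n$ (coinciding with $\Psi$ on $\mathcal{M}^n$), observes that $g^n=f^n\circ(\Psi^n)^{-1}$ satisfies $\modulo{\dot g^n}\equiv1$, and uses the equation to bound $\partial_t\Psi^n(\rho^{n,h})$ in $\L\infty([0,T];\mathcal{M}_b)$ by $\lip(\Psi)\,C_T$; the Lipschitz-in-time estimate then follows by mollification. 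Your route is more elementary and exploits the explicit front structure already guaranteed by the construction; the paper's route is more PDE--flavoured and would survive in settings where a discrete front decomposition is not at hand. Both yield the same constant $\lip(\Psi)\,C_T$.
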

\begin{proof}
    By the standard diagonal procedure argument, it is enough to prove convergence on an arbitrary time interval $[0,T]$, $T>0$. The sequence $\Psi\left(\rho^{n,h} \right)$ is uniformly bounded in $\L\infty\left([0,T]\times\reals;\reals\right) \cap \L\infty\left([0,T]; \BV(\reals;\reals)\right)$ by Corollary~\ref{cor:estups2}. In order to get compactness in $\Lloc1$, see Theorem~2.4 in Ref.~\cite{BressanBook}, we still need to show that $[t\mapsto\Psi\left(\rho^{n,h}(t,\cdot) \right)]$ is Lipschitz with respect to the $\L1$--norm. In analogy to~\eqref{eq:psi}, we define
    \begin{equation}\label{eq:psin}
        \Psi^n(\rho) = \sign(\rho-\bar\rho) ~ \left(f^n(\bar\rho)-f^n(\rho)\right)=\int_{\bar \rho}^\rho \modulo{\dot f^n(r)} ~{\d}r~.
    \end{equation}
    We observe that $\Psi$ coincides with $\Psi^n$ on $\mathcal{M}^n$ and, as a consequence, $\lip(\Psi^n) \le \lip(\Psi)$. Since $\rho^{n,h}$ takes values in $\mathcal{M}^n$, we have $\Psi^n\left(\rho^{n,h}\right) = \Psi\left(\rho^{n,h}\right)$. Hence, $\tv\left( \Psi^n\left( \rho^{n,h}(t) \right) \right) = \tv\left( \Psi\left( \rho^{n,h}(t) \right) \right)$ and by Corollary~\ref{cor:estups2} we have
    \begin{align*}
        \norma{\partial_x\Psi^n\left(\rho^{n,h}\right)}_{\L\infty\left([0,T]; \mathcal{M}_b(\reals;\reals)\right)}
        &\le C_T = \tv\left( \Psi\left( \rho_0 \right) \right) + 4f(\bar\rho) + C ~T~,
    \end{align*}
    uniformly in $n$ and $h$. Above, $\mathcal{M}_b(\reals;\reals)$ denotes the space of bounded Radon measures. Let $g^n = f^n \circ (\Psi^n)^{-1}$ and remark that by~\eqref{eq:psin}
    \begin{align*}
        \dot g^n(\psi) = \frac{\dot f^n \circ (\Psi^n)^{-1}(\psi)}{\dot \Psi^n \circ (\Psi^n)^{-1}(\psi)} = \frac{\dot f^n \circ (\Psi^n)^{-1}(\psi)}{\modulo{\dot f^n \circ (\Psi^n)^{-1}(\psi)}} \in \{-1,1\} ~.
    \end{align*}
    Hence $\partial_t\rho^{n,h}$ is bounded in $\L\infty\left([0,T];\mathcal{M}_b(\reals;\reals)\right)$ because, by Eq.~\eqref{eq:constrianedapprGLOBAL} and Theorem~4 in Ref.~\cite{MR624930}, we have
    \begin{align*}
        \norma{\partial_t\rho^{n,h}}_{\L\infty\left([0,T];\mathcal{M}_b(\reals;\reals)\right)} &\le \norma{\dot g^n}_{\L\infty([-f(\bar\rho),f(\bar\rho)];\reals)} \norma{\partial_x\Psi^n\left(\rho^{n,h}\right)}_{\L\infty\left([0,T];\mathcal{M}_b(\reals;\reals)\right)}\\
        &\le C_T~.
    \end{align*}
    As the functions $\Psi^n$ are uniformly Lipschitz, also the distributions $\mu^{n,h} =\partial_t \Psi^n(\rho^{n,h})$ are uniformly bounded measures in $\L\infty\left([0,T]; \mathcal{M}_b(\reals;\reals)\right)$ with
    \begin{align*}
        \norma{\mu^{n,h}}_{\L\infty\left([0,T]; \mathcal{M}_b(\reals;\reals)\right)} &\le
        \lip(\Psi^n) ~C_T~.
    \end{align*}
    Now, let $(\eta_\nu)_\nu$ be a standard family of mollifiers in $\Cc\infty(\reals^2; \reals )$ and define $F^{n,h}_\nu = \Psi^n\left(\rho^{n,h}\right)*\eta_\nu$ and $\mu^{n,h}_\nu = \mu^{n,h}*\eta_\nu$. Then
    \begin{equation*}
        \norma{\mu^{n,h}_\nu}_{\L\infty([0,T]; \L1(\reals;\reals))} \le
        \norma{\mu^{n,h}}_{\L\infty([0,T]; \mathcal{M}_b(\reals;\reals))} ~.
    \end{equation*}
    Due to the regularity of $F^{n,h}_\nu$, for any $\delta >0$ and for any $0\le t<t+ \delta\le T$
    \begin{align*}
        \norma{F^{n,h}_\nu (t+\delta,\cdot) - F^{n,h}_\nu (t,\cdot)}_{\L1(\reals;\reals)}
        &= \int_{\reals} \modulo{ \int_t^{t+\delta} \mu^{n,h}_\nu(s,x) ~{\d}s} ~{\d}x\\
        &\le
        \delta \norma{\mu^{n,h}_\nu}_{\L\infty([0,T]; \L1(\reals;\reals))} \le
        \delta ~\lip(\Psi^n) ~C_T~,
    \end{align*}
    and as $\nu$ go to zero we deduce the uniform Lipschitz continuity in time of $\Psi\left(\rho^{n,h} \right) = \Psi^n\left(\rho^{n,h} \right)$:
    \begin{equation*}
        \norma{\Psi(\rho^{n,h}(t+\delta,\cdot)) - \Psi(\rho^{n,h}(t,\cdot))}_{\L1(\reals;\reals)} \le \delta ~\lip(\Psi) ~C_T ~.
    \end{equation*}
    In this way we prove the existence of a subsequence of $\Psi\left(\rho^{n,h} \right) = \Psi^n\left(\rho^{n,h} \right)$ that converges in $\Lloc1([0,T]\times\reals;\reals)$ to a function $\psi$ in $\L\infty\left([0,T]; \BV(\reals; [-f(\bar\rho), f(\bar\rho)]) \right)$ which satisfies
    \begin{align}\label{eq:boris}
        \norma{\psi(t+\delta,\cdot) - \psi(t,\cdot)}_{\L1(\reals;\reals)} \le \delta ~\lip(\Psi) ~C_T ~.
    \end{align}
    For simplicity we still denote the subsequence $\Psi\left(\rho^{n,h} \right)$. Since $\Psi$ is invertible and $\Psi^{-1}$ is continuous, also $\rho^{n,h}$ converges in $\Lloc1([0,T]\times\reals;\reals)$ to a function $\rho = \Psi^{-1}(\psi)$ in $\L\infty\left([0,T]\times\reals; [0,R] \right)$. In particular, by~\eqref{eq:etimateBV} and~\eqref{eq:boris} the estimates~\eqref{eq:BVdepen} and~\eqref{eq:BVdepen2} hold true.
\end{proof}

\begin{lemma}\label{lem:besancon}
    For any $T>0$
    \begin{align*}
        \lim_{n,h \to + \infty} \int_0^T \modulo{\Xi^{n,h}(t) - \int_{\reals_-} w(x) ~\rho(t,x) ~{\d} x } ~{\d}t =0~.
    \end{align*}
\end{lemma}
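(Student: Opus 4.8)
The plan is to show that the piecewise-constant-in-time approximation $\Xi^{n,h}$ converges, in the $\L1([0,T])$ sense, to the true weighted average $\xi(t) = \int_{\reals_-} w(x)\,\rho(t,x)\,\d x$. I have two sources of error to control: the spatial integration error, coming from replacing $\rho^{n,h}$ by its limit $\rho$, and the temporal error, coming from the fact that $\Xi^{n,h}$ freezes the value of $\int w\,\rho^{n,h}$ at the left endpoints $\ell\Delta t_h$ of each fractional interval rather than tracking it continuously.

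\textbf{Decomposition.} First I would insert the ``continuous'' version of the weighted average along the approximate solution, namely $\displaystyle \zeta^{n,h}(t) = \int_{\reals_-} w(x)\,\rho^{n,h}(t,x)\,\d x$, and split
\begin{align*}
    \modulo{\Xi^{n,h}(t) - \int_{\reals_-} w(x)\,\rho(t,x)\,\d x}
    \le \modulo{\Xi^{n,h}(t) - \zeta^{n,h}(t)}
    + \modulo{\zeta^{n,h}(t) - \int_{\reals_-} w(x)\,\rho(t,x)\,\d x}.
\end{align*}
The second term I would handle directly: since $\rho^{n,h}\to\rho$ in $\Lloc1([0,T]\times\reals)$ by Proposition~\ref{prop:convergence} and $w$ is bounded with support in $[-\iw,0]$, H\"older's inequality gives
\begin{align*}
    \int_0^T \modulo{\zeta^{n,h}(t) - \int_{\reals_-} w(x)\,\rho(t,x)\,\d x}\,\d t
    \le \norma{w}_{\L\infty} \int_0^T \norma{\rho^{n,h}(t) - \rho(t)}_{\L1([-\iw,0])}\,\d t,
\end{align*}
which tends to $0$ as $n,h\to+\infty$ along the convergent subsequence.

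\textbf{The temporal freezing error.} The remaining term is the genuine obstacle. On the interval $[\ell\Delta t_h,(\ell+1)\Delta t_h[$, by definition~\eqref{eq:appreff} we have $\Xi^{n,h}(t) = \Xi^{n,h}_\ell = \zeta^{n,h}(\ell\Delta t_h)$, so $\modulo{\Xi^{n,h}(t) - \zeta^{n,h}(t)} = \modulo{\zeta^{n,h}(\ell\Delta t_h) - \zeta^{n,h}(t)}$ measures how much the weighted average drifts within a single fractional step. This is exactly what estimate~\eqref{eq:bellissima} from the proof of Lemma~\ref{lem:capraEcavoli} controls: applying it (with $t_1 = \ell\Delta t_h$ and $t_2 = t$ inside the step) on each fractional interval yields $\modulo{\zeta^{n,h}(\ell\Delta t_h) - \zeta^{n,h}(t)} \le (t - \ell\Delta t_h)\,f(\bar\rho)\,w(0-) \le \Delta t_h\,f(\bar\rho)\,w(0-)$. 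Summing over the $\lceil T/\Delta t_h\rceil$ intervals covering $[0,T]$ gives
\begin{align*}
    \int_0^T \modulo{\Xi^{n,h}(t) - \zeta^{n,h}(t)}\,\d t \le T\,\Delta t_h\,f(\bar\rho)\,w(0-),
\end{align*}
and since $\Delta t_h = \bigl(2^{h+1} w(0-)\lip(p)\bigr)^{-1}\to 0$ as $h\to+\infty$ by~\eqref{eq:CFL2}, this error vanishes. Combining both bounds completes the proof.

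\textbf{The main obstacle} is recognizing that the frozen-in-time constraint error is not a defect but is precisely governed by the within-step drift estimate~\eqref{eq:bellissima} already established for Lemma~\ref{lem:capraEcavoli}; the CFL choice of $\Delta t_h$ then guarantees that the total accumulated drift over $[0,T]$ is $O(\Delta t_h)$ and hence negligible. The spatial error is routine once the $\Lloc1$ convergence of Proposition~\ref{prop:convergence} is in hand.
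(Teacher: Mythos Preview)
Your proof is correct and follows essentially the same approach as the paper's: decompose via the intermediate quantity $\zeta^{n,h}(t)=\int_{\reals_-}w(x)\,\rho^{n,h}(t,x)\,{\d}x$, control the temporal freezing error on each fractional step by the drift estimate~\eqref{eq:bellissima}, and handle the remaining term by the $\Lloc1$ convergence of $\rho^{n,h}$ to $\rho$. The only cosmetic difference is that the paper integrates $(t-\ell\Delta t_h)$ over each step to obtain the slightly sharper bound $\tfrac12\bigl(\Delta t_h^2\,\ell_T^h+(T-\ell_T^h\Delta t_h)^2\bigr)f(\bar\rho)\,w(0-)$, whereas you use the uniform bound $\Delta t_h f(\bar\rho)\,w(0-)$ on each step; both vanish as $h\to+\infty$.
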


\begin{proof}
    Let $T>0$ and define $\ell_T^h = \lfloor T/\Delta t_h\rfloor$. Then by~\eqref{eq:appreff} and~\eqref{eq:bellissima}
    \begin{align*}
        &\int_0^T \modulo{ \Xi^{n,h}(t) - \int_{\reals_-} w(x) ~\rho^{n,h}(t,x) ~{\d}x } {\d}t \le\\
        \le& \sum_{\ell=0}^{\ell_T^h-1} \int_{\ell\Delta t_h}^{(\ell+1)\Delta t_h} \modulo{\Xi^{n,h}_\ell(t - \ell\Delta t_h) - \int_{\reals_-} w(x) ~\rho^{n,h}_{\ell+1}(t - \ell\Delta t_h,x) ~{\d}x} ~{\d}t\\
        &+ \int_{\ell_T^h\Delta t_h}^T \modulo{\Xi^{n,h}_{\ell_T^h}(t - \ell_T^h\Delta t_h) - \int_{\reals_-} w(x) ~\rho^{n,h}_{\ell_T^h+1}(t - \ell_T^h\Delta t_h,x) ~{\d}x} ~{\d}t\\
        =& \sum_{\ell=0}^{\ell_T^h-1} \int_{0}^{\Delta t_h} \modulo{\int_{\reals_-} w(x) \left[\rho^{n,h}_{\ell+1}(0,x) - \rho^{n,h}_{\ell+1}(t,x)\right] {\d}x} ~{\d}t\\
        &+ \int_{0}^{T-\ell_T^h\Delta t_h} \modulo{\int_{\reals_-} w(x) \left[\rho^{n,h}_{\ell_T^h+1}(0,x) - \rho^{n,h}_{\ell_T^h+1}(t,x)\right] ~{\d}x} ~{\d}t\\
        \le& \left[\sum_{\ell=0}^{\ell_T^h-1} \int_{0}^{\Delta t_h} t ~{\d}t
        + \int_{0}^{T-\ell_T^h\Delta t_h} t ~{\d}t\right] f(\bar\rho) ~w(0-)\\
        =&\frac{\Delta t_h^2 ~\ell_T^h + (T-\ell_T^h\Delta t_h)^2}{2} ~f(\bar\rho) ~w(0-) ~.
    \end{align*}
    Therefore, since $\ell_T^h\Delta t_h$ converges to $T$ as $h$ goes to infinity and $\rho^{n,h}$ converges to $\rho$ in $\Lloc1(\reals_+\times\reals;\reals)$ as $n$ and $h$ go to infinity, the proof is complete.
\end{proof}

Since  $\rho^{n,h}$ converges to $\rho$  in $\Lloc1$, Proposition~\ref{prop:lallelarsson} and Lemma~\ref{lem:besancon} imply that $\left[t \mapsto \rho(t)\right]$ satisfies the conditions~\eqref{eq:entropysol1}--\eqref{eq:entropysol3} and~\eqref{eq:entropysol5} of Definition~\ref{def:entropysol} with respect to the problem~\eqref{eq:constrianed}. Moreover, $\rho$ satisfies the condition~\eqref{eq:entropysol4} of Definition~\ref{def:entropysol} by Lemma~\ref{lem:star}, and it satisfies estimate~\eqref{eq:BVdepen} and~\eqref{eq:BVdepen2} by~ Proposition~\ref{prop:convergence}. Finally, observe that $\rho \in \C0\left(\reals_+; \Lloc1(\reals;[0,R])\right)$  because of entropy inequalities~\eqref{eq:entropysol1}--\eqref{eq:entropysol3}, see Ref.~\cite{MR2780572}. As already observed $\Psi\left(\rho\right) \in \C0\left(\reals_+; \BV(\reals; \reals) \right)$ thus $\rho(t) \in \mathcal{D}$ for all $t$.

Uniqueness of the entropy weak solutions to the Cauchy problem~\eqref{eq:constrianed} in the case $p\in\Lip([0,R]; \reals)$, $\rho_0 \in \mathcal{D}$ follows directly from uniqueness in the $\L\infty$--framework, see the first part of the proof of Theorem~\ref{thm:1}.

\section{The constrained Riemann problem}\label{sec:Riemann}

In this section we study constrained Riemann problems of the form
\begin{subequations}\label{eq:constrianedRiemann}
\begin{align}\label{eq:constrianedRiemann1}
    \partial_t\rho + \partial_xf(\rho) &= 0 & (t,x) &\in \reals_+\times\reals\\
    \label{eq:constrianedRiemann2}
    f\left(\rho(t,0\pm)\right) &\le p\left( \int_{\reals_-} w(x) ~ \rho(t,x) ~{\d} x \right) & t &\in \reals_+\\
    \label{eq:constrianedRiemann3}
    \rho(0,x) &=\left\{
    \begin{array}{l@{\qquad\hbox{if }}l}
      \rho_L & x < 0\\
      \rho_R & x \ge 0
    \end{array}
    \right. & x &\in \reals
\end{align}
\end{subequations}
with $\rho_L, \rho_R \in [0,R]$. Along with~\textbf{(F)} and~\textbf{(W)}, we assume that:
\begin{enumerate}
  \item[\textbf{(P2)}] $p$ belongs to $\PC \left( \left[0,R\right]; \left]0,f(\bar\rho)\right] \right)$ and is a non--increasing map.
\end{enumerate}
The assumption~\textbf{(P2)} is introduced in place of~\textbf{(P1)} to allow an explicit construction of solutions to~\eqref{eq:constrianedRiemann}. However, the regularity of $p$ required by~\textbf{(P2)} is not enough to apply the results of Theorem~\ref{thm:1}. In fact, the uniqueness of entropy weak solutions as well as the stability estimate~\eqref{eq:lipdepen-localized} do not hold in the present framework, as we will see in Example~\ref{ex:nonuniqueness1}.

Aiming for a general construction of the solutions to~\eqref{eq:constrianedRiemann}, we allow $p$ to be a multi--valued piecewise constant function, namely, see Fig.~\ref{fig:Korn}, right:
\begin{itemize}
\item there exist $\xi_1, \ldots, \xi_n \in \left]0,R\right[$ and $p_0, \ldots, p_n \in \left]0,f(\bar\rho)\right]$, with $\xi_i < \xi_{i+1}$ and $p_i > p_{i+1}$, such that $p(0) = p_0$, $p(R) = p_n$, $p ~\caratt{\left]\xi_i, \xi_{i+1}\right[} = p_i$ for $i=0,\ldots,n$,  $p(\xi_i)= \left[p_i, p_{i-1}\right]$ for $i=1,\ldots,n$, being $\xi_0 = 0$ and $\xi_{n+1} = R$.
\end{itemize}
Let $\sigma(\rho_L, \rho_R)=\left( f(\rho_L) - f(\rho_R)\right)/ \left(\rho_L - \rho_R\right)$ be the speed of propagation of a shock between $\rho_L$ and $\rho_R$, while $\lambda(\rho) = f'(\rho)$ is the characteristic speed. Introduce the maps $\check\rho,\hat\rho ~\colon~ [0,f(\bar\rho)] \to [0,R]$ implicitly defined by
\begin{align*}
    f\left(\check\rho(p)\right) = p = f\left(\hat\rho(p)\right)\quad\hbox{ and }\quad\check\rho(p) \le\bar\rho \le \hat\rho(p)~.
\end{align*}
Let $\check\rho_i = \check\rho(p_i)$ and $\hat\rho_i = \hat\rho(p_i)$. Denote by $\mathcal{R}$ the classical Riemann solver Refs.~\cite{Lax1957}, \cite{Liu1975}. This means that the map $\left[ (t,x) \mapsto \mathcal{R}[\rho_L, \rho_R](x/t) \right]$ is the unique entropy weak solution for the unconstrained problem~\eqref{eq:constrianedRiemann1}, \eqref{eq:constrianedRiemann3}, see Refs.~\cite{BressanBook}, \cite{Rosinibook}, \cite{SerreBook} for its construction. As we will see in Proposition~\ref{prop:riemann}, the classical solutions given by $\mathcal{R}$ may not satisfy the constraint~\eqref{eq:constrianedRiemann2}. For this reason we consider also nonclassical solutions, namely solutions that do not satisfy the Lax entropy inequalities, see Ref.~\cite{LeflochBook} as a general reference. In general, entropy weak solutions to~\eqref{eq:constrianedRiemann} are not self--similar nor unique, as we will show in the two following examples.
\begin{example}\label{ex:es0}
    Let $0<\rho_L<\rho_R<R$ be such that $f(\rho_L) > f(\rho_R)$. If $\xi_i \le \rho_L < \xi_{i+1}$, $p_{i+1} < f(\rho_R) < p_i$ and $j>i$ is such that  $f\left(\hat\rho_j\right) = p\left(\hat\rho_j\right)$ and $f\left(\hat\rho_k\right) > p\left(\hat\rho_k\right)$ for all $k \in \{i,\ldots,j-1\}$, see Fig.~\ref{fig:nonself}, left, then the entropy weak solution to the corresponding Riemann problem~\eqref{eq:constrianedRiemann} is not self--similar, see Fig.~\ref{fig:nonself}, right. More in detail, for sufficiently small times, the solution corresponds to the classical one and is given by a shock with speed $\sigma(\rho_L,\rho_R)<0$. The corresponding map $[t \mapsto \xi(t)]$ is increasing and by hypothesis, there exists a time $t_1 < -\iw/ \sigma(\rho_L,\rho_R)$ such that $\xi(t_1) = \xi_{i+1}$. Then, the efficiency of the exit falls to $p_{i+1}$ and the solution given by the classical Riemann solver $\mathcal{R}$ no longer satisfies the constraint condition~\eqref{eq:constrianedRiemann2}. As a result, at time $t_1$ the solution performs a nonclassical discontinuity at the constraint location and two further classical shocks appear, one with speed $\sigma(\rho_R, \hat\rho_{i+1})<0$ and one with speed $\sigma(\check\rho_{i+1}, \rho_R)>0$. The final solution can then be constructed by taking into account the interactions between the shocks on each side of the constraint and the appearance of new shocks each time $[t \mapsto \xi(t)]$ crosses $\xi_k$, $k \in \{i+1,\ldots,j-1\}$.
\end{example}
\begin{figure}[htpb]
      \centering
        \includegraphics[width=.75\textwidth]{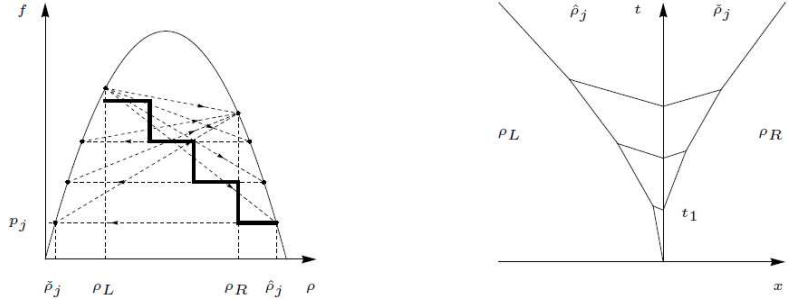}
      \caption{Construction of a non self--similar entropy weak solution as in Example~\ref{ex:es0}. On the left, the thick line corresponds to the efficiency of the exit $p|_{]\rho_L,\hat\rho_j[}$.}
\label{fig:nonself}
\end{figure}
As we have seen, the lack of self--similarity is related to the jumps of $[t \mapsto p\left(\xi(t)\right)]$. Nevertheless, in the proof of Proposition~\ref{prop:riemann} we show that any entropy weak solution of~\eqref{eq:constrianedRiemann} is self--similar for sufficiently small times. Therefore, it makes sense to introduce nonclassical \emph{local} Riemann solvers, see Definition~\ref{def:Rp}. Then, the availability of a local Riemann solver allows us to construct a \emph{global} solution to the Riemann problem~\eqref{eq:constrianedRiemann} by a wave--front tracking algorithm in which the jumps in the map $[t \mapsto p\left(\xi(t)\right)]$ are interpreted as interactions.

The next example shows that the entropy weak solutions to the constrained Riemann problem~\eqref{eq:constrianedRiemann} are not necessarily unique.
\begin{figure}[htpb]
      \centering
        \includegraphics[width=1\textwidth]{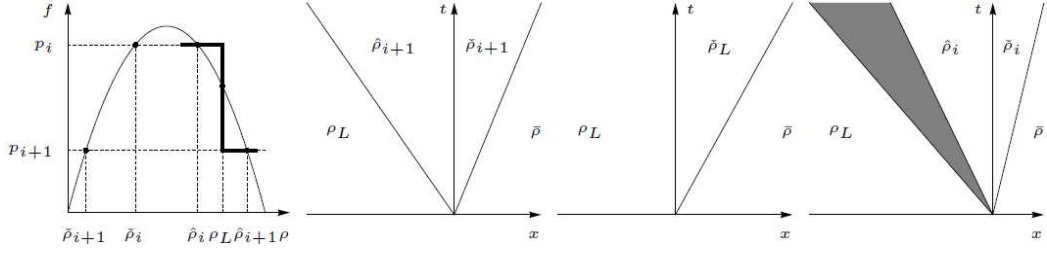}
      \caption{With reference to Example~\ref{ex:nonuniqueness1}, the flux configuration and three different solutions $\rho_1$, $\rho_2$ and $\rho_3$ to the same Riemann problem are represented from left to right. Here $\check\rho_L = \check\rho\left(f(\rho_L)\right)$.}
\label{fig:nonuiqueness1}
\end{figure}
\begin{example}\label{ex:nonuniqueness1}
    Consider the constrained Riemann problem~\eqref{eq:constrianedRiemann} with $\rho_L = \xi_{i+1} \in \left]\bar\rho, R\right[$ and $\rho_R = \bar\rho$. Assume that $f\left( \hat\rho_{i+1} \right) = p_{i+1} \le f(\xi_{i+1}) \le p_i = f\left( \hat\rho_i \right) < f(\bar\rho)$, see Fig.~\ref{fig:nonuiqueness1}, left, then
    \begin{align*}
        \rho_1(x/t) &= \left\{
        \begin{array}{l@{\quad\hbox{if }}l}
           \mathcal{R}[\xi_{i+1}, \hat\rho_{i+1}](x/t) & x <0\\
           \mathcal{R}[\check\rho_{i+1}, \bar\rho](x/t) & x \ge0~,
        \end{array}
        \right.
        \\
        \rho_2(x/t) &= \left\{
        \begin{array}{l@{\quad\hbox{if }}l}
           \xi_{i+1} & x <0\\
           \mathcal{R}[\check\rho\left(f(\xi_{i+1})\right), \bar\rho](x/t) & x \ge0~,
        \end{array}
        \right.
        \\
        \rho_3(x/t) &= \left\{
        \begin{array}{l@{\quad\hbox{if }}l}
           \mathcal{R}[\xi_{i+1}, \hat\rho_i](x/t) & x <0\\
           \mathcal{R}[\check\rho_i, \bar\rho](x/t) & x \ge0~,
        \end{array}
        \right.
    \end{align*}
    are self--similar entropy weak solutions of problem~\eqref{eq:constrianedRiemann} with the same datum, see Fig.~\ref{fig:nonuiqueness1}. Clearly, the above solutions are distinct if $p_{i+1} \ne f(\xi_{i+1}) \ne p_i$, otherwise two of them may coincide. Additionally, for an arbitrarily chosen $\bar t>0$, the functions
    \begin{align*}
        \rho_{\bar t,1}(t,x) &= \left\{
        \begin{array}{l@{\quad}l}
           \rho_2\left(x/t\right) &\hbox{if }0<t \le\bar t\\
           \rho_1\left(x/(t-\bar t)\right) &\hbox{if }t >\bar t\hbox{ and }x<0\\
           \check\rho_{i+1} &\hbox{if } \bar t < t \le \tilde t_1 \hbox{ and }0\le x <\sigma\left(\check\rho_{i+1},\check\rho\left(f(\xi_{i+1})\right)\right) (t-\bar t)\\
           \bar\rho &\hbox{if } \bar t < t \le \tilde t_1 \hbox{ and }x\ge \sigma\left(\check\rho\left(f(\xi_{i+1})\right), \bar\rho\right) t\\
           \check\rho_{i+1} &\hbox{if } t> \tilde t_1 \hbox{ and }0\le x <\sigma\left(\check\rho_{i+1},\bar\rho\right) (t-\tilde t_1) + \tilde x_1\\
           \bar\rho &\hbox{if } t> \tilde t_1 \hbox{ and } x \ge\sigma\left(\check\rho_{i+1},\bar\rho\right) (t-\tilde t_1) + \tilde x_1\\
           \check\rho\left(f(\xi_{i+1})\right) &\hbox{otherwise},
        \end{array}
        \right.
        \\
        \rho_{\bar t,3}(t,x) &= \left\{
        \begin{array}{l@{\quad}l}
           \rho_2\left(x/t\right) &\hbox{if }0<t \le\bar t\\
           \rho_3\left(x/(t-\bar t)\right) &\hbox{if }t >\bar t\hbox{ and }x<0\\
           \check\rho_i &\hbox{if } \bar t < t \le \tilde t_3 \hbox{ and }0\le x <\sigma\left(\check\rho_i,\check\rho\left(f(\xi_{i+1})\right)\right) (t-\bar t)\\
           \bar\rho &\hbox{if } \bar t < t \le \tilde t_3 \hbox{ and }x\ge \sigma\left(\check\rho\left(f(\xi_{i+1})\right), \bar\rho\right) t\\
           \check\rho_i &\hbox{if } t> \tilde t_3 \hbox{ and }0\le x <\sigma\left(\check\rho_i,\bar\rho\right) (t-\tilde t_3) + \tilde x_3\\
           \bar\rho &\hbox{if } t> \tilde t_3 \hbox{ and } x \ge\sigma\left(\check\rho_i,\bar\rho\right) (t-\tilde t_3) + \tilde x_3\\
           \check\rho\left(f(\xi_{i+1})\right) &\hbox{otherwise},
        \end{array}
        \right.
    \end{align*}
    where
    \begin{align*}
        \tilde t_1 &= \dfrac{\bar t~\sigma\left(\check\rho_{i+1}, \check\rho\left(f(\xi_{i+1})\right)\right)}{\sigma\left(\check\rho_{i+1}, \check\rho\left(f(\xi_{i+1})\right)\right) - \sigma\left(\check\rho\left(f(\xi_{i+1})\right),\bar\rho\right)}~,&
        \tilde x_1 &= \tilde t_1~\sigma\left(\check\rho\left(f(\xi_{i+1})\right),\bar\rho\right)~,
        \\
        \tilde t_3 &= \dfrac{\bar t~\sigma\left(\check\rho_i, \check\rho\left(f(\xi_{i+1})\right)\right)}{\sigma\left(\check\rho_i, \check\rho\left(f(\xi_{i+1})\right)\right) - \sigma\left(\check\rho\left(f(\xi_{i+1})\right),\bar\rho\right)}~,&
        \tilde x_3 &= \tilde t_3~\sigma\left(\check\rho\left(f(\xi_{i+1})\right),\bar\rho\right)~,
    \end{align*}
    are also entropy weak solutions, see Fig.~\ref{fig:nonuiqueness2}. Therefore, because of the arbitrariness of $\bar t$, we can build infinitely many different solutions which are not self--similar on any open time interval. However, remark that the asymptotic profile of $\rho_{\bar t,i}$ coincides with the asymptotic profile of $\rho_i$.

    One may guess that the lack of uniqueness is due to the fact that $p$ is a multi--valued function. But we observe that if we pick up $f(\xi_{i+1})$ as value for $p(\xi_{i+1})$, then still all the above solutions are admissible. Moreover, if we have $p(\xi_{i+1}) \neq f(\xi_{i+1})$, then both $\rho_1$ and $\rho_3$ are admissible, but not $\rho_2$. Thus there is more than one entropy weak solution even if $p$ is a single valued function.
\end{example}
\begin{figure}[htpb]
      \centering
        \includegraphics[width=.55\textwidth]{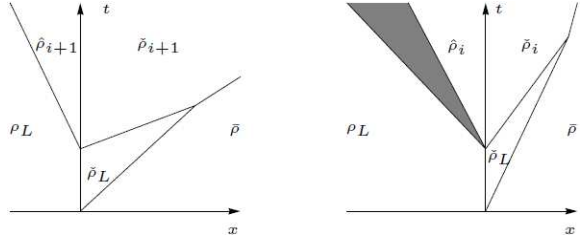}
      \caption{With reference to Example~\ref{ex:nonuniqueness1}, the solutions $\rho_{\bar t,1}$ and $\rho_{\bar t,3}$. Above $\check\rho_L = \check\rho\left(f(\rho_L)\right)$.}
\label{fig:nonuiqueness2}
\end{figure}

Introduce the subset of $\left[0, R\right]^2$
\begin{align*}
    \mathcal{C} &= \left\{ \left(\rho_L, \rho_R\right) \in \left[0, R\right]^2 ~\colon~ \left(\rho_L, \rho_R\right)\hbox{satisfies condition~\textbf{(C)}} \right\} ,
\end{align*}
where we say that $\left(\rho_L, \rho_R\right)$ satisfies condition~\textbf{(C)} if it satisfies one of the following conditions:
\begin{description}
  \item[(C1)] $\rho_L < \rho_R$, $f(\rho_R) < f(\rho_L)$ and $f(\rho_R) \le p(\rho_L+)$;
  \item[(C2)] $\rho_L <\rho_R$, $f(\rho_L) \le f(\rho_R)$ and $f(\rho_L) \le p(\rho_L+)$;
  \item[(C3)] $\rho_R \le \rho_L \le \bar\rho$ and $f(\rho_L) \le p\left( \rho_L +\right)$;
  \item[(C4)] $\rho_R \le \bar\rho < \rho_L$ and $f(\bar\rho) = p\left( \rho_L +\right)$;
  \item[(C5)] $\bar\rho < \rho_R \le \rho_L$, $f(\rho_R) \le p\left( \rho_L -\right)$ and $f(\rho_L) < p\left( \rho_L +\right)$.
\end{description}
In Proposition~\ref{prop:riemann} we will prove that a constrained Riemann problem admits as unique entropy weak solution the classical one, at least for small times, if and only if its initial datum satisfies condition~\textbf{(C)}.

Analogously, introduce the subset of $\left[0, R\right]^2$
\begin{align*}
    \mathcal{N}  &= \left\{ \left(\rho_L, \rho_R\right) \in \left[0, R\right]^2 ~\colon~ \left(\rho_L, \rho_R\right)\hbox{satisfies condition~\textbf{(N)}} \right\},
\end{align*}
where we say that $\left(\rho_L, \rho_R\right)$ satisfies condition~\textbf{(N)} if it satisfies one of the following conditions:
\begin{description}
    \item[(N1)] $\rho_L < \rho_R$ and $f(\rho_L)> f(\rho_R) > p(\rho_L+)$;
    \item[(N2)] $\rho_L <\rho_R$, $f(\rho_L) \le f(\rho_R)$ and $f(\rho_L)> p(\rho_L-)$;
    \item[(N3)] $\rho_R \le \rho_L \le \bar\rho$ and $f(\rho_L) > p\left( \rho_L -\right)$;
    \item[(N4a)] $\rho_R \le \bar\rho < \rho_L$, $f(\bar\rho) \ne p\left( \rho_L- \right)$ and $f(\rho_L) < p(\rho_L+)$;
    \item[(N4b)] $\rho_R \le \bar\rho < \rho_L$, $f(\bar\rho) \ne p\left( \rho_L- \right)$ and $f(\rho_L)> p(\rho_L-)$;
    \item[(N5a)] $\bar\rho < \rho_R \le \rho_L$, $f(\rho_R) > p\left( \rho_L- \right)$ and $f(\rho_L) < p\left( \rho_L +\right)$;
    \item[(N5b)] $\bar\rho < \rho_R \le \rho_L$ and $f(\rho_L) > p\left( \rho_L -\right)$.
\end{description}
In Proposition~\ref{prop:riemann} we will prove that, at least for small times, a constrained Riemann problem has a unique entropy weak solution which is nonclassical if and only if its initial datum satisfies condition~\textbf{(N)}.

Observe that if the constraint function $p$ is constant in a neighborhood of the state $\rho_L$, then $p(\rho_L-) = p(\rho_L+)$ and this simplifies the above conditions. Also a right or left continuity assumption on $p$ would simplify the above conditions. However, we keep $p$ as a multi--valued function to take into account all the possible solutions of~\eqref{eq:constrianedRiemann}.

In the next proposition, we show that uniqueness holds if and only if the initial data are in $\mathcal{C} \cup \mathcal{N}$.
\begin{proposition}\label{prop:riemann}
    Consider the constrained Riemann problem~\eqref{eq:constrianedRiemann}.

    \noindent$\bullet$~If $(\rho_L, \rho_R) \in \mathcal{C}$, then the map $\left[ (t,x) \mapsto \mathcal{R}[\rho_L, \rho_R](x/t) \right]$ is the unique entropy weak solution at least  for $t>0$ sufficiently small.

    \noindent$\bullet$~If $(\rho_L, \rho_R) \in \mathcal{N}$, then there exists a unique $\bar p \in \left[p(\rho_L+), p(\rho_L-)\right]$ such that the map
            \begin{align*}
                \left[
                t \mapsto \left\{\begin{array}{l@{\quad\hbox{if }}l} \mathcal{R}[\rho_L, \hat\rho\left( \bar p \right)](x/t)&x<0\\ \mathcal{R}[\check\rho\left( \bar p \right), \rho_R](x/t)&x\ge0\end{array}\right.\right]
            \end{align*}
            is the unique entropy weak solution at least for $t>0$ sufficiently small.

    \noindent$\bullet$~If $(\rho_L, \rho_R) \in [0,R]^2 \setminus (\mathcal{C} \cup \mathcal{N})$, then the corresponding constrained Riemann solver~\eqref{eq:constrianedRiemann} admits more than one entropy weak solution.
\end{proposition}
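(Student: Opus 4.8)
The plan is to reduce the nonlocal Riemann problem to a \emph{frozen} (constant) constraint and then read off the three regimes from the explicit constant-constraint Riemann solver of Ref.~\cite{ColomboGoatinConstraint}. The starting observation is that, by~\textbf{(W)} and~\eqref{eq:constrianedRiemann3},
\begin{align*}
    \xi(0) = \int_{-\iw}^0 w(x)\,\rho_L\,{\d}x = \rho_L .
\end{align*}
Since any entropy weak solution lies in $\C0(\reals_+;\Lloc1)$, the map $[t\mapsto\xi(t)]$ is continuous and $\xi(t)\to\rho_L$ as $t\downarrow0$. Hence for $t$ small the quantity $p(\xi(t))$ can only take values in the interval $[p(\rho_L+),p(\rho_L-)]$, which collapses to the single value $p(\rho_L)$ whenever $\rho_L$ is not a jump point of $p$. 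First I would prove the self-similarity claim: freezing the constraint at a constant $\bar p\in[p(\rho_L+),p(\rho_L-)]$ turns~\eqref{eq:constrianedRiemann} into a problem of type~\eqref{eq:constrianedLOCAL} with constant $q\equiv\bar p$, whose unique entropy weak solution is the self-similar Colombo--Goatin solver; conversely, any entropy weak solution of~\eqref{eq:constrianedRiemann} solves such a frozen problem on a short time interval, hence must coincide with a self-similar profile for small $t$.

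Next I would describe the two admissible profiles for a fixed frozen level $\bar p$. Either the classical solution $\mathcal{R}[\rho_L,\rho_R]$ already satisfies $f(\rho(0\pm))\le\bar p$, in which case it is the unique solution for that $\bar p$; or it violates the constraint, and the solution is the two-sided nonclassical profile connecting $\rho_L$ to $\hat\rho(\bar p)$ on $x<0$ and $\check\rho(\bar p)$ to $\rho_R$ on $x>0$, with a stationary nonclassical jump of flux $\bar p$ at $x=0$. Self-similarity of both candidates is inherited from the constant-constraint theory. The crucial selection mechanism is the sign of $\dot\xi(0+)$: writing the balance law for $\xi$ defined in~\eqref{eq:xi} and passing to the limit $t\downarrow0$ in the self-similar profile, where $\rho(t,x)\to\rho_L$ for every fixed $x<0$, gives
\begin{align*}
    \dot\xi(0+) = w(0-)\left[f(\rho_L) - f\left(\rho(0-)\right)\right],
\end{align*}
so $\xi$ increases, stays, or decreases according to whether the outgoing flux $f(\rho(0-))$ at the exit is smaller than, equal to, or larger than $f(\rho_L)$. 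When $\rho_L$ is a jump point of $p$ this fixes which one-sided value of $p$ is active: a rightward $\xi$ selects $p(\rho_L+)$, a leftward $\xi$ selects $p(\rho_L-)$, and a stationary $\xi$ (i.e.\ $f(\rho(0-))=f(\rho_L)$) is compatible with the intermediate frozen value $\bar p=f(\rho_L)$.

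With this selection rule in hand the three bullets become a finite case analysis organized by the relative positions of $\rho_L,\rho_R,\bar\rho$, exactly the partition defining conditions~\textbf{(C1)}--\textbf{(C5)} and~\textbf{(N1)}--\textbf{(N5b)}. For $(\rho_L,\rho_R)\in\mathcal{C}$ one checks that the classical profile is self-consistent, i.e.\ its flux at the exit respects the active value of $p$, and that no nonclassical frozen level is simultaneously admissible, giving uniqueness of the classical solution. For $(\rho_L,\rho_R)\in\mathcal{N}$ the classical flux exceeds every admissible $\bar p$, so the solution must be nonclassical; existence and uniqueness of the frozen level $\bar p\in[p(\rho_L+),p(\rho_L-)]$ follow from the monotonicity of $p$ together with the sign rule above, which forces a single consistent choice. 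Finally, for $(\rho_L,\rho_R)\notin\mathcal{C}\cup\mathcal{N}$ the selection rule no longer isolates a single admissible $\bar p$: typically $f(\rho_L)$ lies strictly inside the jump interval $(p(\rho_L+),p(\rho_L-))$, so the stationary level $\bar p=f(\rho_L)$ \emph{and} at least one one-sided level are both consistent; the corresponding profiles are distinct, exhibiting several solutions as in Example~\ref{ex:nonuniqueness1} and proving non-uniqueness.

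I expect the main obstacle to be the rigorous proof of self-similarity for small times together with the careful bookkeeping of the multi-valued constraint at a jump point: one must show that $\dot\xi(0+)$ is well defined with the stated sign for \emph{every} entropy weak solution, not only for the candidate profiles, so that no non-self-similar solution can sneak in before $\xi$ first meets a threshold $\xi_k$, and one must match each sign alternative to precisely one of the many subcases in~\textbf{(C)}/\textbf{(N)} while tracking the strict versus non-strict inequalities carried by $p(\rho_L\pm)$.
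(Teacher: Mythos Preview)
Your proposal is correct and follows essentially the same route as the paper. Both arguments start from $\xi(0)=\rho_L$ and the continuity of $t\mapsto\xi(t)$, reduce to the constant-constraint Riemann solver of Colombo--Goatin, and then run the finite case analysis through \textbf{(C1)}--\textbf{(C5)} and \textbf{(N1)}--\textbf{(N5b)}; the paper also lists the pathological cases \textbf{(CN2)}, \textbf{(CN3)}, \textbf{(NNN4)}, \textbf{(CNN5)}, \textbf{(NNN5)} exactly as you anticipate. The only cosmetic difference is that where you compute $\dot\xi(0+)=w(0-)\bigl[f(\rho_L)-f(\rho(0-))\bigr]$ to decide which one-sided value of $p$ is active, the paper encodes the same selection rule geometrically: it compares $\rho_L$ with $\hat\rho(\bar p)$ directly (their properties \textbf{np1}--\textbf{np4}), observing that $\rho_L<\hat\rho(\bar p)$ forces $\xi$ to increase (hence $\bar p=p(\rho_L+)$) and $\hat\rho(\bar p)<\rho_L$ forces $\xi$ to decrease (hence $\bar p=p(\rho_L-)$). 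Your derivative formula and their comparison are equivalent, and the paper is in fact no more rigorous than you are about the self-similarity step you flag as the main obstacle: it simply asserts, via Proposition~\ref{prop:ws} and the Kru\v zkov structure on each half-plane, that any entropy weak solution must take the classical or the nonclassical form~\eqref{eq:nonclassicalsol} for small times.
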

\begin{proof}
We stress that any nonclassical entropy weak solution in the sense of Definition~\ref{def:entropysol} is also a classical entropy weak solution in the Kru\v zkov sense in the half--planes $\reals_+ \times \reals_-$ and $\reals_+ \times \reals_+$. Therefore, at least for $t>0$ sufficiently small, by Proposition~\ref{prop:ws} and assumption~\textbf{(P2)} any nonclassical entropy weak solution of~\eqref{eq:constrianedRiemann} must have the form, see Fig.~\ref{fig:N123},
\begin{subequations}\label{eq:nonclassicalsol}
\begin{align}\label{eq:nonclassicalsol1}
    \rho(t,x) = \left\{
    \begin{array}{l@{\qquad\hbox{if }}l}
      \mathcal{R}[\rho_L,\hat\rho(\bar p)](x/t)& x<0\\
      \mathcal{R}[\check\rho(\bar p),\rho_R](x/t)& x\ge0 ~.
    \end{array}
    \right.
\end{align}

\begin{figure}[htpb]
      \centering
        \includegraphics[width=.9\textwidth]{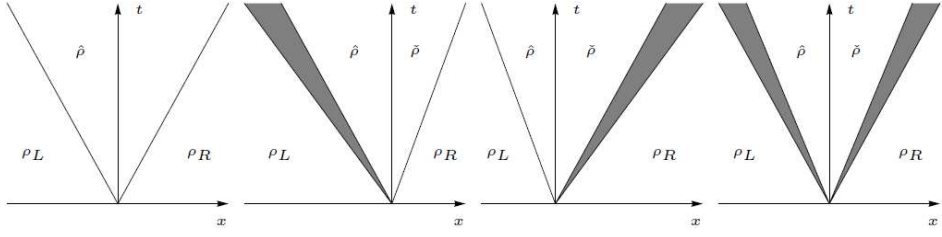}
      \caption{The four possible configurations of nonclassical entropy weak solutions of the form~\eqref{eq:nonclassicalsol}.}
\label{fig:N123}
\end{figure}
\noindent Observe that~\eqref{eq:nonclassicalsol1} is uniquely identified once we know $\bar p$ which, by \eqref{eq:ws}, satisfies
   \begin{align}\label{eq:nonclassicalsol2}
        \bar p = f\left(\check\rho(\bar p)\right) = f\left(\hat\rho(\bar p)\right) ~.
    \end{align}
We recall that~\eqref{eq:nonclassicalsol2} means in particular that the Rankine--Hugoniot jump condition is satisfied at $x=0$ even when the solution to the Riemann problem is nonclassical. As a consequence of~\eqref{eq:nonclassicalsol2}, of assumption~\textbf{(P2)} and of the continuity of $\left[t \mapsto \xi(t)\right]$, we have that
\begin{align}\label{eq:nonclassicalsol3}
    \bar p \in \left[p(\rho_L+), p(\rho_L-)\right].
\end{align}
\end{subequations}
This implies that if $p(\rho_L+) = p(\rho_L-)$, then $p(\xi)$ is constant in a neighborhood of $\rho_L$ and, since the solution is in $\C0\left(\reals_+; \Lloc1(\reals;[0,R])\right)$, uniqueness is ensured by the results in Ref.~\cite{ColomboGoatinConstraint}. However, the continuity of $p$ at $\rho_L$ is not a necessary condition for uniqueness. In Sec.~\ref{sec:proofRiem} we prove that:
\begin{description}
  \item[$(\rho_L, \rho_R) \in \mathcal{C}$] In this case, the corresponding classical solution satisfies~\eqref{eq:constrianedRiemann} for all $t>0$ sufficiently small and it is not possible to construct a different solution.
  \item[$(\rho_L, \rho_R) \in \mathcal{N}$] In this case, the corresponding classical solution does not satisfy~\eqref{eq:constrianedRiemann2}, and there exists a unique nonclassical solution that satisfies~\eqref{eq:constrianedRiemann}.
\end{description}

Now we list the ``pathological'' cases, where we have more than one admissible solution. We stress once again that a necessary condition for non--uniqueness is  $p(\rho_L-) \ne p(\rho_L+)$ and $p(\rho_L-) \ge f(\rho_L) \ge p(\rho_L+)$. It is important to stress that in general the solutions to the constrained Riemann problem~\eqref{eq:constrianedRiemann} are not self--similar, see Example~\ref{ex:es0}. All the cases listed below describe self--similar solutions because we let the solutions evolve only on a small interval of time.
\begin{description}
\item[(CN2)] If $\rho_L < \rho_R$, $f(\rho_L) \le f(\rho_R)$ and $p(\rho_L+)< f(\rho_L) \le p(\rho_L-)$, then the classical solution $\left[ (t,x) \mapsto \mathcal{R}[\rho_L, \rho_R](x/t) \right]$, which consists of a shock with non negative speed, as well as the nonclassical solution~\eqref{eq:nonclassicalsol}, with $\bar p = p(\rho_L+)$, are distinct solutions of~\eqref{eq:constrianedRiemann}.
\item[(CN3)] If $\rho_R \le \rho_L\le \bar\rho$ and  $p(\rho_L+)< f(\rho_L) \le p(\rho_L-)$, then the classical solution $\left[ (t,x) \mapsto \mathcal{R}[\rho_L, \rho_R](x/t) \right]$, which consists of a possible null rarefaction on the right of the constraint, as well as the nonclassical solution~\eqref{eq:nonclassicalsol}, with $\bar p = p(\rho_L+)$, are distinct solutions of~\eqref{eq:constrianedRiemann}.
\item[(NNN4)] If $\rho_R \le \bar\rho < \rho_L$, $p(\rho_L-) \ne p(\rho_L+)$ and $p(\rho_L+) \le f(\rho_L) \le p(\rho_L-)$, then the nonclassical solutions of the form~\eqref{eq:nonclassicalsol} corresponding to $\bar p \in \{p(\rho_L+), f(\rho_L),  p(\rho_L-)\}$ satisfy~\eqref{eq:constrianedRiemann}, see Example~\ref{ex:nonuniqueness1}. Observe that such solutions are distinct as far as they correspond to distinct constraint levels $\bar p$, and that in any case there exist at least two distinct nonclassical solutions.
\item[(CNN5)] If $\bar\rho < \rho_R \le \rho_L$, $f(\rho_R) \le p(\rho_L-)$, $p(\rho_L-) \ne p(\rho_L+)$ and $p(\rho_L+) \le f(\rho_L)$, then the classical solution $\left[ (t,x) \mapsto \mathcal{R}[\rho_L, \rho_R](x/t) \right]$, which consists of a possible null rarefaction on the left of the constraint, as well as the nonclassical solutions of the form~\eqref{eq:nonclassicalsol} corresponding to $\bar p \in \{p(\rho_L+), f(\rho_L)\}$ satisfy~\eqref{eq:constrianedRiemann}. Observe that the two nonclassical solutions are distinct as far as they correspond to distinct constraint levels $\bar p$, and that in any case there exist at least two distinct solutions, one classical and one nonclassical.
\item[(NNN5)] If $\bar\rho < \rho_R < \rho_L$, $f(\rho_R) > p(\rho_L-) \ge f(\rho_L) \ge p(\rho_L+)$ and $p(\rho_L-) \ne p(\rho_L+)$, then the nonclassical solutions of the form~\eqref{eq:nonclassicalsol} corresponding to $\bar p \in \{p(\rho_L+), f(\rho_L),  p(\rho_L-)\}$ satisfy~\eqref{eq:constrianedRiemann}. Observe that such solutions are distinct as far as they correspond to distinct constraint levels $\bar p$, and that in any case there exist at least two distinct nonclassical solutions.
\end{description}
This concludes the proof.
\end{proof}

As the local solutions of the Riemann problem are not unique in general, we are naturally led to question the existence of suitable selection criteria. All the solutions we introduce are solutions in the Kru\v zkov sense in the open half--planes $\reals_+\times \reals_+$  and  $\reals_+\times \reals_-$, so they satisfy the basic requirement of entropy dissipation. However, coming back to the real situation which our model aims to describe, namely the evacuation of a narrow corridor, we argue that the most desirable solution is obviously the one corresponding to the highest admissible values of the flux at the exit. In analogy to the discussion in Ref.~\cite{Libbano} we interpret all other possible solutions as consequences of an irrational behavior, which in literature is often described as \emph{panic}. It is also important to remark that since non--uniqueness is possible only when  $p(\rho_L-) \ne p(\rho_L+)$ and $p(\rho_L-) \ge f(\rho_L) \ge p(\rho_L+)$, non--uniqueness concerns at most a finite number of left states.

From now on we restrict ourselves to the case in which $p(\xi_i)$ can only take the values $p_i$ and $p_{i+1}$ and not the intermediate values, because the extremal behaviors are the most relevant in view of the applications.

\begin{definition}\label{def:Rp}
    Two Riemann solvers $\mathcal{R}^q$ and $\mathcal{R}^p$ for~\eqref{eq:constrianedRiemann} are defined as follows for $t>0$ sufficiently small and $x \in \reals$:
    \begin{description}
    \item[(C)] If $(\rho_L, \rho_R) \in \mathcal{C}$ then
      \[\mathcal{R}^q[\rho_L, \rho_R] (t,x)=\mathcal{R}^p[\rho_L, \rho_R](t,x) = \mathcal{R}[\rho_L, \rho_R](x/t).\]
    \item[(N)] If $(\rho_L, \rho_R) \in \mathcal{N}$ then
        $$\mathcal{R}^q[\rho_L, \rho_R] (t,x)= \mathcal{R}^p[\rho_L, \rho_R](t,x) = \left\{\begin{array}{l@{\quad\hbox{if }}l} \mathcal{R}[\rho_L, \hat\rho\left(\bar p \right)](x/t)&x<0\\ \mathcal{R}[\check\rho\left( \bar p \right), \rho_R](x/t)&x\ge0~,\end{array}\right.$$
      where $\bar p = p(\rho_L-)$ if $(\rho_L,\rho_R)$ satisfies~\textbf{(N4a)} or~\textbf{(N5a)}, otherwise $\bar p = p(\rho_L+)$.
    \item[(CN2), (CN3), (CNN5)] If $(\rho_L, \rho_R)$ satisfies one of these sets of conditions  then
        \begin{align*}
            \mathcal{R}^q[\rho_L, \rho_R](t,x) &= \mathcal{R}[\rho_L, \rho_R](x/t)~,\\
            \mathcal{R}^p[\rho_L, \rho_R](t,x) &= \left\{\begin{array}{l@{\quad\hbox{if }}l} \mathcal{R}[\rho_L, \hat\rho\left(p(\rho_L+)\right)](x/t)&x<0\\ \mathcal{R}[\check\rho\left(p(\rho_L+) \right), \rho_R](x/t)&x\ge0.\end{array}\right.
        \end{align*}
     \item[(NNN4), (NNN5)] If $(\rho_L, \rho_R)$ satisfies one of these sets of conditions then  $\mathcal{R}^q[\rho_L, \rho_R](t,x) $ takes the form \eqref{eq:nonclassicalsol} with $\bar p = p(\rho_L-) $ and $\mathcal{R}^p[\rho_L, \rho_R](t,x) $ takes the form \eqref{eq:nonclassicalsol} with $\bar p = p(\rho_L+) $.
    \end{description}
    \end{definition}
In the next proposition we collect the main properties of the Riemann solvers $\mathcal{R}^q$ and $\mathcal{R}^p$. In particular \textbf{(R6)} means that the Riemann solver $\mathcal{R}^q$ is the one which allows for the fastest evacuation, while $\mathcal{R}^p$ is associated to the slowest one.
\begin{proposition}\label{prop:Riemann}
Let $(\rho_L, \rho_R) \in \left[0,R\right]^2$. Then, for $\star = q,\,p$:
\begin{enumerate}
\item[\textbf{(R1)}] $\left[(t,x) \mapsto \mathcal{R}^\star[\rho_L, \rho_R](t,x) \right]$ is a weak solution to~\eqref{eq:constrianedRiemann1}, \eqref{eq:constrianedRiemann3}.
\item[\textbf{(R2)}] $\mathcal{R}^\star[\rho_L, \rho_R]$ satisfies the constraint~\eqref{eq:constrianedRiemann2} in the sense that
    \begin{align*}
        f\left(\mathcal{R}^\star[\rho_L, \rho_R](t,0\pm)\right) &\le p\left( \int_{\reals_-} w(x) ~ \mathcal{R}^\star[\rho_L, \rho_R] \left(t,x\right) ~{\d} x\right) .
    \end{align*}
\item[\textbf{(R3)}] $\mathcal{R}^\star[\rho_L, \rho_R](t) \in \BV\left( \reals;[0,R] \right)$.
\item[\textbf{(R4)}] The map $\mathcal{R}^\star ~\colon~ [0,R]^2 \to \Lloc1(\reals_+ \times \reals; \reals)$ is continuous in $\mathcal{C}\cup\mathcal{N}$ but not in all $[0,R]^2$.
\item[\textbf{(R5)}] $\mathcal{R}^\star$ is consistent, see Refs.~\cite{ColomboGoatinConstraint}, \cite{ColomboPriuli}.
\item[\textbf{(R6)}] $\mathcal{R}^q[\rho_L,\rho_R]$ maximizes the flux at the exit, in the sense that if $\mathcal{E}$ is the set of all entropy weak solutions of the Riemann problem \eqref{eq:constrianedRiemann}, we have
  $$
  \max_{\rho \in \mathcal{E}}\left\{f(\rho(t,0\pm)) \right\} = f\left(\mathcal{R}^q[\rho_L,\rho_R](0\pm)\right).
  $$
  Analogously, $\mathcal{R}^p[\rho_L,\rho_R]$ minimizes the flux at the exit, in the sense that
  $$
  \min_{\rho \in \mathcal{E}}\left\{f(\rho(t,0\pm)) \right\} = f\left(\mathcal{R}^p[\rho_L,\rho_R](0\pm)\right).
  $$
\end{enumerate}
\end{proposition}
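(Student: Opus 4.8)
The plan is to verify the six properties one at a time, leaning heavily on the preceding Proposition~\ref{prop:riemann}, which already pins down the exact form of all admissible solutions for small times. For \textbf{(R1)}, I would observe that in all cases of Definition~\ref{def:Rp} the solver produces, on each half--plane $\reals_+\times\reals_\mp$, an entropy weak solution given by the classical Riemann solver $\mathcal{R}$, and that the value $\bar p$ selected (whether $p(\rho_L\pm)$) always satisfies the Rankine--Hugoniot condition~\eqref{eq:nonclassicalsol2} at $x=0$, i.e.\ $f(\hat\rho(\bar p))=f(\check\rho(\bar p))=\bar p$. Since the flux trace matches across $x=0$ and each half is a weak solution, gluing them gives a weak solution to~\eqref{eq:constrianedRiemann1}, \eqref{eq:constrianedRiemann3}; this is a direct check, case by case. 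Property \textbf{(R3)} is immediate since each piece is a finite superposition of shocks and rarefactions, hence $\BV$, and the states are bounded in $[0,R]$.

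For \textbf{(R2)} I would argue that the selected $\bar p$ equals the flux at $x=0$, so it suffices to show $\bar p\le p(\xi(0+))$, where $\xi(0+)=\lim_{t\downarrow0}\int_{\reals_-}w(x)\,\mathcal{R}^\star[\rho_L,\rho_R](t,x)\,{\d}x$. For small $t$ the solution near $x=0-$ is constant equal to $\rho_L$ (or to $\hat\rho(\bar p)$, which still lies to the right of $\rho_L$ on the decreasing branch), so $\xi(0+)$ coincides with the weighted average of the trace on the left; using that $p$ is non--increasing together with the placement of $\bar p$ relative to $p(\rho_L\pm)$ guaranteed by~\eqref{eq:nonclassicalsol3} gives the inequality. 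The cases \textbf{(C)} and \textbf{(N)} are exactly the content of Proposition~\ref{prop:riemann}, so the only work is in the ``pathological'' cases, where one must check that the specific choice $\bar p=p(\rho_L+)$ or $p(\rho_L-)$ respects the constraint; this is a short verification using the defining inequalities of \textbf{(CN2)}--\textbf{(NNN5)}.

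Properties \textbf{(R4)} and \textbf{(R5)} are more structural. For consistency \textbf{(R5)} I would follow the definition used in Refs.~\cite{ColomboGoatinConstraint}, \cite{ColomboPriuli}: if a solution restricted to a subdomain is fed back as a Riemann datum, the solver reproduces the same solution. Since $\mathcal{R}^\star$ is built from the classical solver $\mathcal{R}$ (which is consistent) plus a deterministic selection of $\bar p$ depending only on $\rho_L$, consistency reduces to checking that the intermediate states $\hat\rho(\bar p)$, $\check\rho(\bar p)$ generated at the constraint, when taken as new left/right data, again fall into classes that regenerate the same waves; this is a finite case analysis. For \textbf{(R4)}, continuity on $\mathcal{C}\cup\mathcal{N}$ follows from the continuous dependence of $\mathcal{R}$ on its data together with the continuity of $\check\rho,\hat\rho$ and the fact that $\bar p$ varies continuously there, while the failure of continuity on all of $[0,R]^2$ is exhibited precisely by the jump in the selected solution as $(\rho_L,\rho_R)$ crosses into one of the pathological sets (the discontinuity in $p$ at $\rho_L$).

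The main obstacle I expect is \textbf{(R6)}, the extremality/optimization claim. Here I would let $\mathcal{E}$ denote the set of all entropy weak solutions of~\eqref{eq:constrianedRiemann} and show that the flux $f(\rho(t,0\pm))$ over $\rho\in\mathcal{E}$ is maximized by $\mathcal{R}^q$ and minimized by $\mathcal{R}^p$. By Proposition~\ref{prop:ws} and the form~\eqref{eq:nonclassicalsol}, every admissible solution has flux at the exit equal to some $\bar p\in[p(\rho_L+),p(\rho_L-)]$ satisfying~\eqref{eq:nonclassicalsol2}, and the constraint~\eqref{eq:entropysol4} forces $\bar p\le p(\xi(t))$; conversely the classical solution, when admissible, realizes $f(\rho_L)$. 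Thus the achievable flux values at $x=0$ form a subset of the interval $[p(\rho_L+),p(\rho_L-)]$, and the point is that $\mathcal{R}^q$ always selects the largest value of $\bar p$ compatible with admissibility (namely $p(\rho_L-)$ or, in the classical cases, $f(\rho_L)$ itself), whereas $\mathcal{R}^p$ selects the smallest, $p(\rho_L+)$. The delicate part is verifying that no admissible solution can exceed the $\mathcal{R}^q$--value: one must use that a larger flux at the exit would force $\check\rho(\bar p)$, $\hat\rho(\bar p)$ closer to $\bar\rho$, violating either the constraint inequality or the sign condition $\rho(t,0+)<\bar\rho<\rho(t,0-)$ required for a nonclassical jump by~\textbf{(F)}. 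Organizing this monotonicity argument cleanly across all the cases of Definition~\ref{def:Rp} is where the real care is needed; the remaining properties are essentially bookkeeping built on Proposition~\ref{prop:riemann}.
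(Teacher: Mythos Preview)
Your plan is essentially correct and follows the paper's approach for \textbf{(R1)}--\textbf{(R4)}, but you substantially overestimate the work needed for \textbf{(R5)} and \textbf{(R6)}.

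For \textbf{(R5)}, the paper does not do a finite case analysis on intermediate states. Instead it observes that, locally in time, $\xi(t)$ stays in a single interval $]\xi_i,\xi_{i+1}[$ (by continuity of $\xi$ and assumption~\textbf{(P2)}), so the efficiency $p(\xi(t))$ is \emph{constant}. This reduces the consistency check to the fixed--constraint Riemann solver already treated in Ref.~\cite{ColomboGoatinConstraint}, and nothing further is required.

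For \textbf{(R6)}, which you flag as the ``main obstacle'' requiring a delicate monotonicity argument, the paper simply says it is clear from the proof of Proposition~\ref{prop:riemann}. The reason is that Proposition~\ref{prop:riemann} does not merely constrain the admissible flux values to an interval; it \emph{exhaustively enumerates} every entropy weak solution for small times. In the cases $\mathcal{C}\cup\mathcal{N}$ the solution is unique, so there is nothing to optimize. In each pathological case \textbf{(CN2)}--\textbf{(NNN5)} the enumeration lists at most three candidate solutions, each with an explicit flux at $x=0$ taken from $\{p(\rho_L+),\,f(\rho_L),\,p(\rho_L-)\}$; comparing these finitely many values and matching them against Definition~\ref{def:Rp} is immediate. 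Your abstract monotonicity argument would also work, but it is unnecessary once one realizes the set $\mathcal{E}$ has already been written down explicitly.
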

\noindent The proof of Proposition~\ref{prop:Riemann} is deferred to Sec.~\ref{sec:technicalRiemann}.

It is important to observe that even if $p(\xi_i)$ can only take the two values $p_i$ and $p_{i+1}$, this is not enough to rule out the existence of  infinitely many different solutions as the ones described in Example~\ref{ex:nonuniqueness1}, in the case $p_i > f(\xi_i) = p_{i+1}$. However, the Riemann solver $\mathcal{R}^\star$ spontaneously selects one of them because it sticks to the constant level of constraint prescribed by Definition~\ref{def:Rp} until a nonlocal interaction takes place.
\begin{remark}\label{rem:catastrofe}
    Although the Riemann solvers $\mathcal{R}^\star$ are not $\Lloc1$--continuous, an existence result for the Cauchy problem~\eqref{eq:constrianed} can be obtained from a wave--front tracking algorithm based on $\mathcal{R}^\star$, see for instance Ref.~\cite{ColomboRosini2}. Such approach using $\mathcal{R}^\star$ does not require the operator splitting method. However, the non--local nature of the approximating problems prevents us from a direct application of the Riemann solvers $\mathcal{R}^\star$. In fact, even in a arbitrary small neighborhood of $x=0$, to prolong the approximating solution $\rho^n$ beyond a time $t = \bar t>0$ it is not sufficient to know the traces $\rho^n(\bar t,0-)$, $\rho^n(\bar t,0+)$, but also the value $\int_{-\iw}^0 w(x) ~\rho^n(\bar t,x) ~{\d}x$ is needed. Roughly speaking, because of the non--local character of the constraint one cannot merely juxtapose the solution to the Riemann problem associated to the values of the traces at $x=0$ with the solution to the Riemann problems away from the constraint. Finally, also jumps in $[t \mapsto p\left(\xi(t)\right)]$ have to be considered as (nonlocal) interactions. Therefore, the approach using $\mathcal{R}^\star$ is considerably heavier and more technical than the one we presented in Sec.~\ref{sec:Thealgorithm}, and we do not pursue this line in this paper.
\end{remark}

\section{Numerical examples}\label{sec:example}

 \begin{figure}
 \centering
 \subfigure[The solution in the $(x,t)$--plane.]
   {
      \centering
        \includegraphics[height=0.6\textheight]{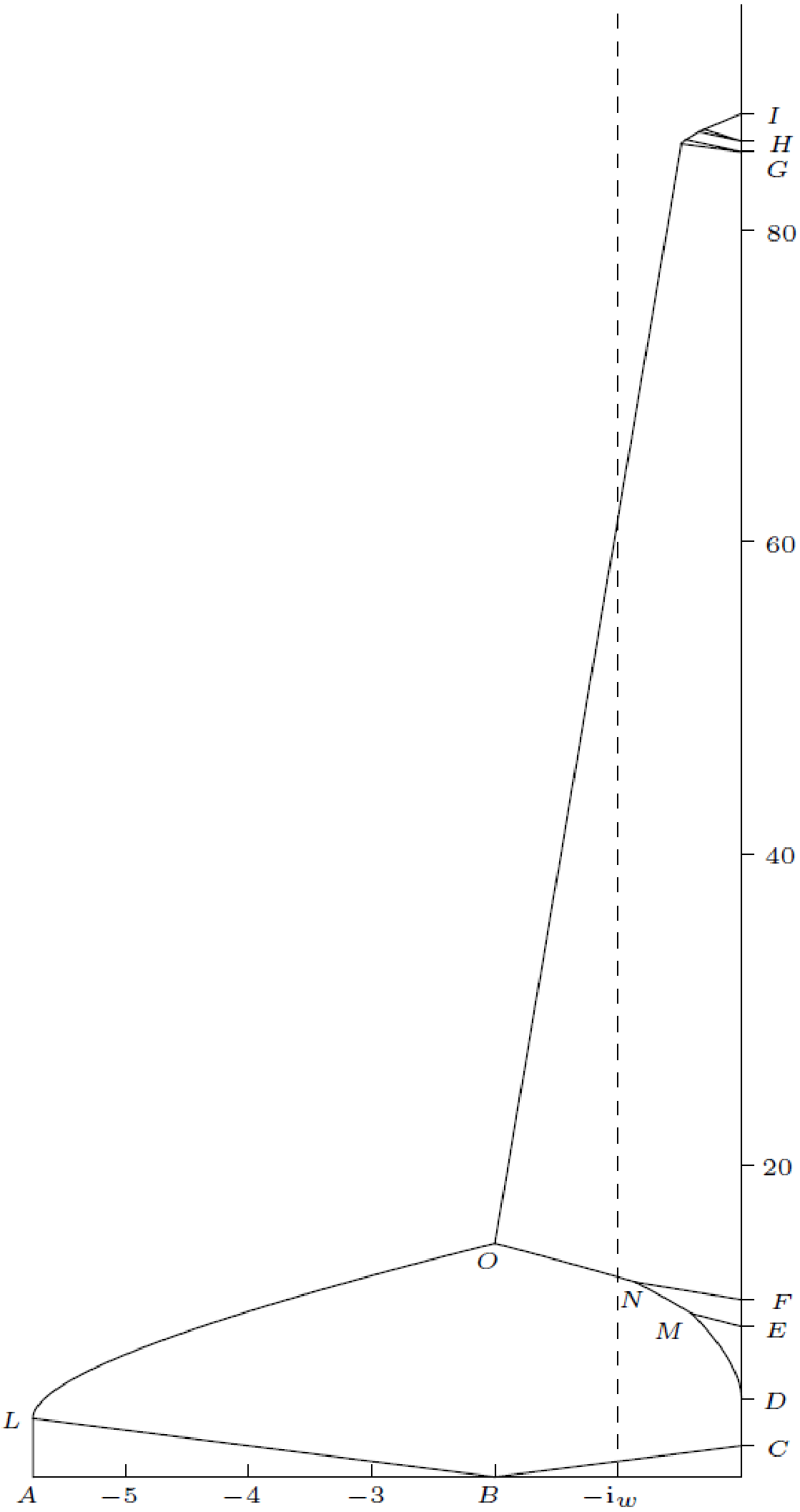}
}
 \hspace{5mm}
 \subfigure[Profiles of ${[x\mapsto\rho(t,x)]}$ at times $t = 0$, $t_E/2$, $(t_F+t_G)/2$, $(t_G+t_H)/2$, $(t_Q+t_R)/2$.]
   {
     \centering
     \includegraphics[height=0.6\textheight]{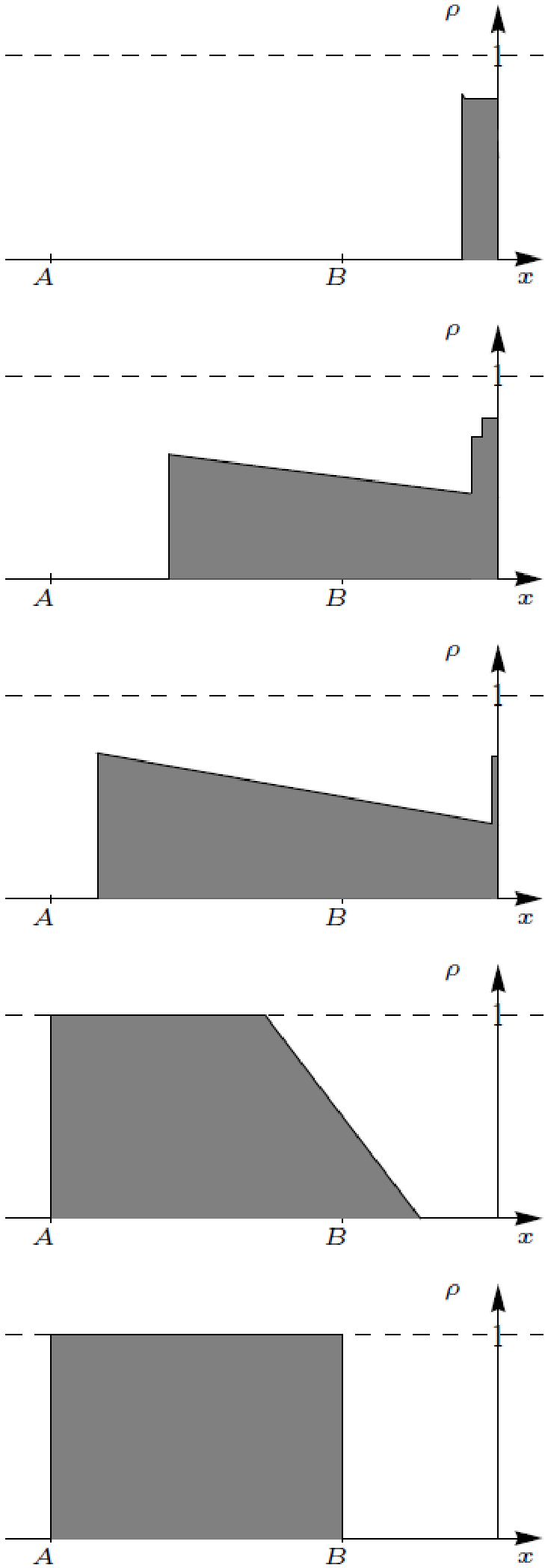}
}
       
 \caption{The solution described in Sec.~\ref{sec:example} and corresponding to the choice~\eqref{eq:numchoice}}
\label{fig:TheFlowerKings}
 \end{figure}

In this section we apply the model~\eqref{eq:constrianed} to simulate the evacuation of a corridor through an exit placed in $x=0$. The simulation is obtained by explicit analysis of the wave front interactions, with computer--assisted computation of front slopes and interaction times presented on Fig.~\ref{fig:TheFlowerKings}.

Assume that the pedestrians are initially uniformly distributed in $x \in [x_A, x_B]$ with maximal density, namely $\rho_0 = R~\chi_{[x_A, x_B]}$. As in Fig.~\ref{fig:TheFlowerKings2}~(a), we choose the efficiency of the exit, $p$, of the form
\begin{align*}
    p(\xi) &= \left\{
    \begin{array}{l@{\quad\hbox{ if }}l}
       p_0 & 0\le\xi<\xi_1\\
       p_1 & \xi_1\le\xi<\xi_2\\
       p_2 & \xi_2\le\xi\le R~,
    \end{array}
    \right.
\end{align*}
and such that the solution to each Riemann problem is unique and $\mathcal{R}^p \equiv \mathcal{R}^q$.

Then we can start with the construction of the solution. From $B=(x_B,0)$ starts the rarefaction $\mathcal{R}_B$ that takes the values $\mathcal{R}_B(t,x)$ implicitly given by
\begin{align*}
    \lambda(R) \le \lambda\left( \mathcal{R}_B(t,x) \right) = \frac{x-x_B}{t} \le \lambda(0)~.
\end{align*}
The first pedestrian reaches the exit at time $t_C= -x_B/\lambda(0)$. In $L=(x_A,t_L)$, with $t_L = (x_A-x_B)/\lambda(R)$, the stationary shock $\mathcal{C}_A$ originated from $A=(x_A,0)$ starts to interact with the rarefaction $\mathcal{R}_B$. As a result, from $L$ starts a shock $\mathcal{C}_L$ given by
\begin{align*}
    \mathcal{C}_L&\colon&
    \dot x(t) &= \sigma\left(0, \mathcal{R}_B\left(t,x(t)\right)\right) ~,&
    x(t_L)&=x_A.
\end{align*}
At time $t=t_D$ the maximal efficiency of the exit $p_0$ is reached, $f\left(\mathcal{R}_B(t_D,0)\right) = p_0$, and a queue appears behind it. The tail of the queue is represented by the backward shock $\mathcal{C}_D$ given by
\begin{align*}
    \mathcal{C}_D&\colon&
    \dot x(t) &= \sigma\left(\mathcal{R}_B\left(t,x(t)\right),\hat\rho(p_0)\right) ~,&
    x(t_D)&=0.
\end{align*}
Let $t_E$ be the value of $t$ solving the equation
\begin{align*}
    E&\colon&&
    \int_{-\iw}^{\mathcal{C}_D(t)} w(x) ~\mathcal{R}_B(t,x) ~{\d} x
    +\hat\rho(p_0) \int_{\mathcal{C}_D(t)}^0 w(x) ~{\d} x = \xi_1,\\
    &&&t>t_D~, \qquad \mathcal{C}_D(t) > -\iw ~.
\end{align*}
The data can be chosen in such a way that $\mathcal{C}_L(t_E) < -\iw$, then at time $t = t_E$ the efficiency of the exit falls to $p_1$ and a further shock $\mathcal{C}_E$ with constant speed $\sigma\left(\hat\rho(p_0), \hat\rho(p_1)\right)<0$ appears and reaches $\mathcal{C}_D$ in $M$. As a result, from $M$ starts the backward shock $\mathcal{C}_M$ given by
\begin{align*}
    \mathcal{C}_M&\colon&
    \dot x(t) &= \sigma\left(\mathcal{R}_B\left(t,x(t)\right),\hat\rho(p_1)\right) ~,&
    x(t_M)&=x_M.
\end{align*}
If $t_F$ is the solution of
\begin{align*}
    F&\colon&&
    \int_{-\iw}^{\mathcal{C}_M(t)} w(x) ~\mathcal{R}_B(t,x) ~{\d} x
    +\hat\rho(p_1) \int_{\mathcal{C}_M(t)}^0 w(x) ~{\d} x = \xi_2,\\
    &&&t>t_M~, \qquad \mathcal{C}_M(t) > -\iw ~,
\end{align*}
with $\mathcal{C}_L(t_F) < -\iw$, then the fall in the efficiency of the exit to $p_2$ affects the flow and from $F$ starts a shock $\mathcal{C}_F$ with constant speed $\sigma\left(\hat\rho(p_1), \hat\rho(p_2)\right)<0$ that reaches $\mathcal{C}_M$ in $N$. From $N$ then starts the backward shock $\mathcal{C}_N$ given by
\begin{align*}
    \mathcal{C}_N&\colon&
    \dot x(t) &= \sigma\left(\mathcal{R}_B\left(t,x(t)\right),\hat\rho(p_2)\right) ~,&
    x(t_N)&=x_N.
\end{align*}
We assume that $\mathcal{C}_N$ and $\mathcal{C}_L$ meet in $O$ with $x_O<-\iw$. Then from $O$ starts a forward shock $\mathcal{C}_O$. Observe that $\xi(t) = \hat\rho(p_2)$ for any time $t$ between $t_O$ and the time at which $\mathcal{C}_O$ crosses $x=-\iw$, see Fig.~\ref{fig:TheFlowerKings2}~(b). After that, the map $[t \mapsto \xi(t)]$ starts to decrease and, consequently, the efficiency of the exit increases at time $t_G$ when $\xi(t_G) = \xi_2$ and then again at time $t_H$ when $\xi(t_H) = \xi_1$. Observe that the shock $\mathcal{C}_O$ moves faster after its interaction with the two rarefactions started from $G$ and $H$ and that it finally reaches $x=0$ at time $t_I$, that corresponds to the evacuation time.

\begin{figure}[htpb]\tiny
      \centering
\subfigure[The functions ${[\rho \mapsto f(\rho)]}$ and ${[\xi \mapsto p(\xi)]}$.]
        {\includegraphics[width=0.32\textwidth]{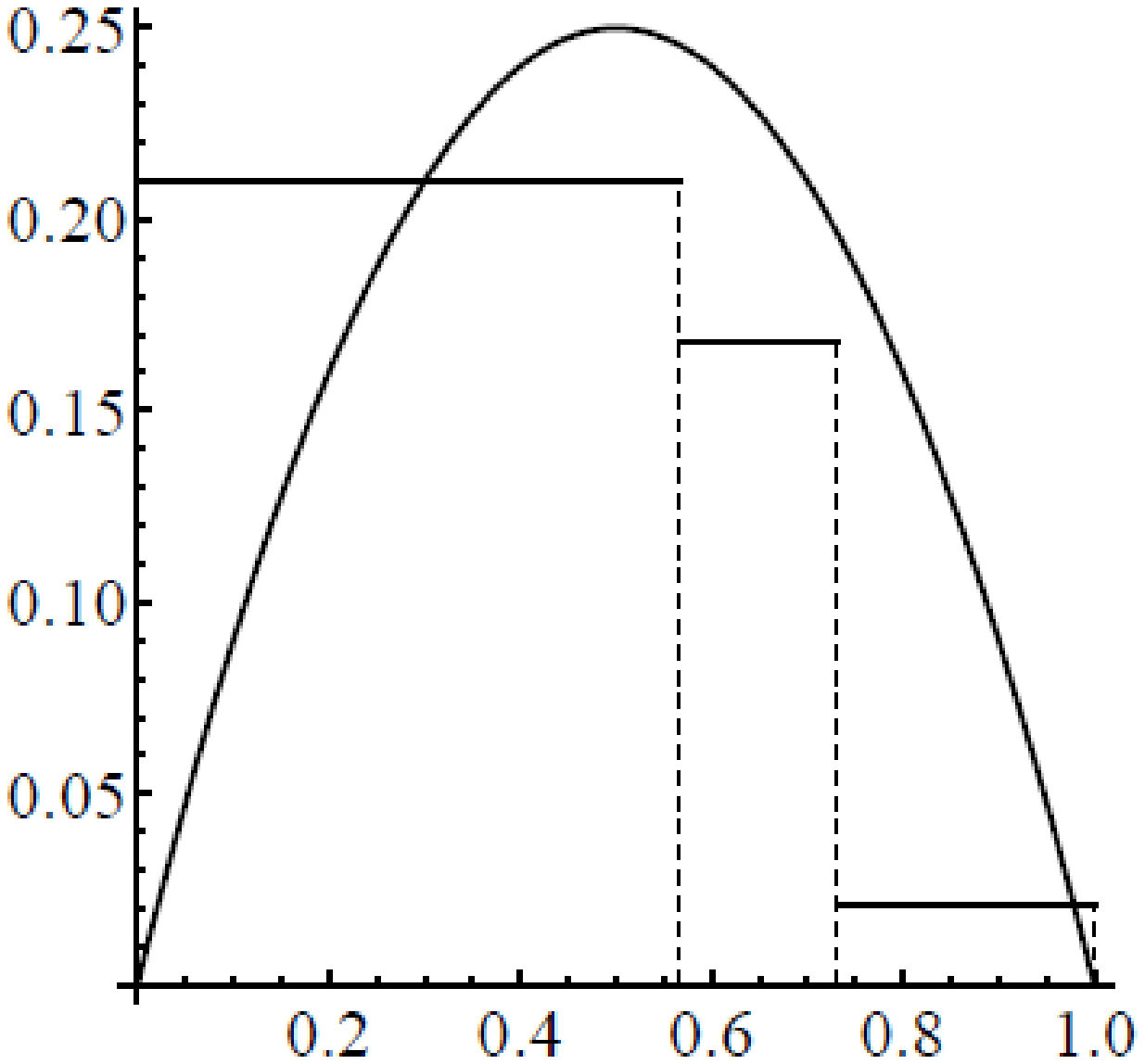}}
\subfigure[The function ${[t \mapsto \xi(t)]}$.]
        {\includegraphics[width=0.32\textwidth]{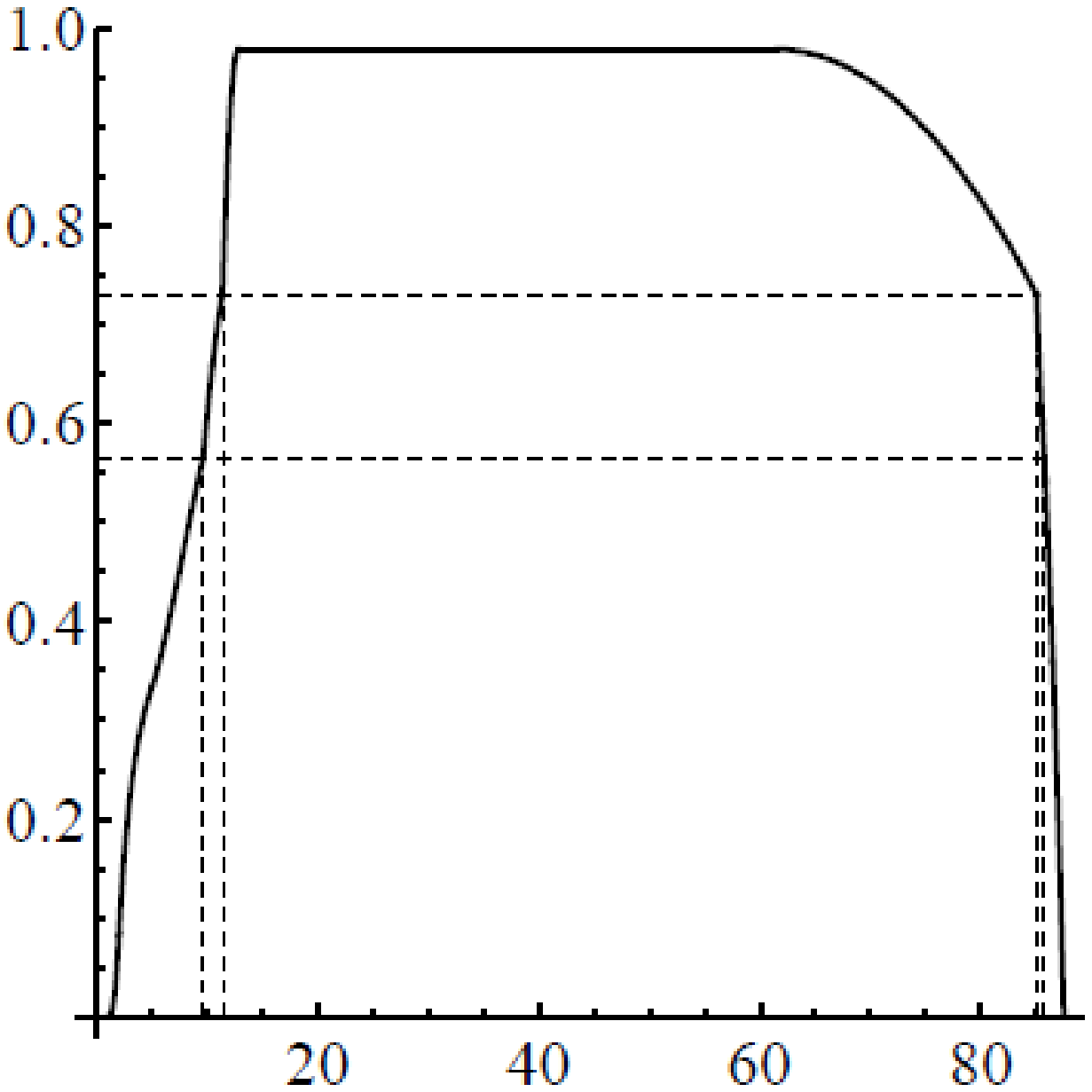}}
\subfigure[In gray the region $\mathcal{C}$ and in white the region $\mathcal{N}$.]
        {
        \includegraphics[width=0.33\textwidth]{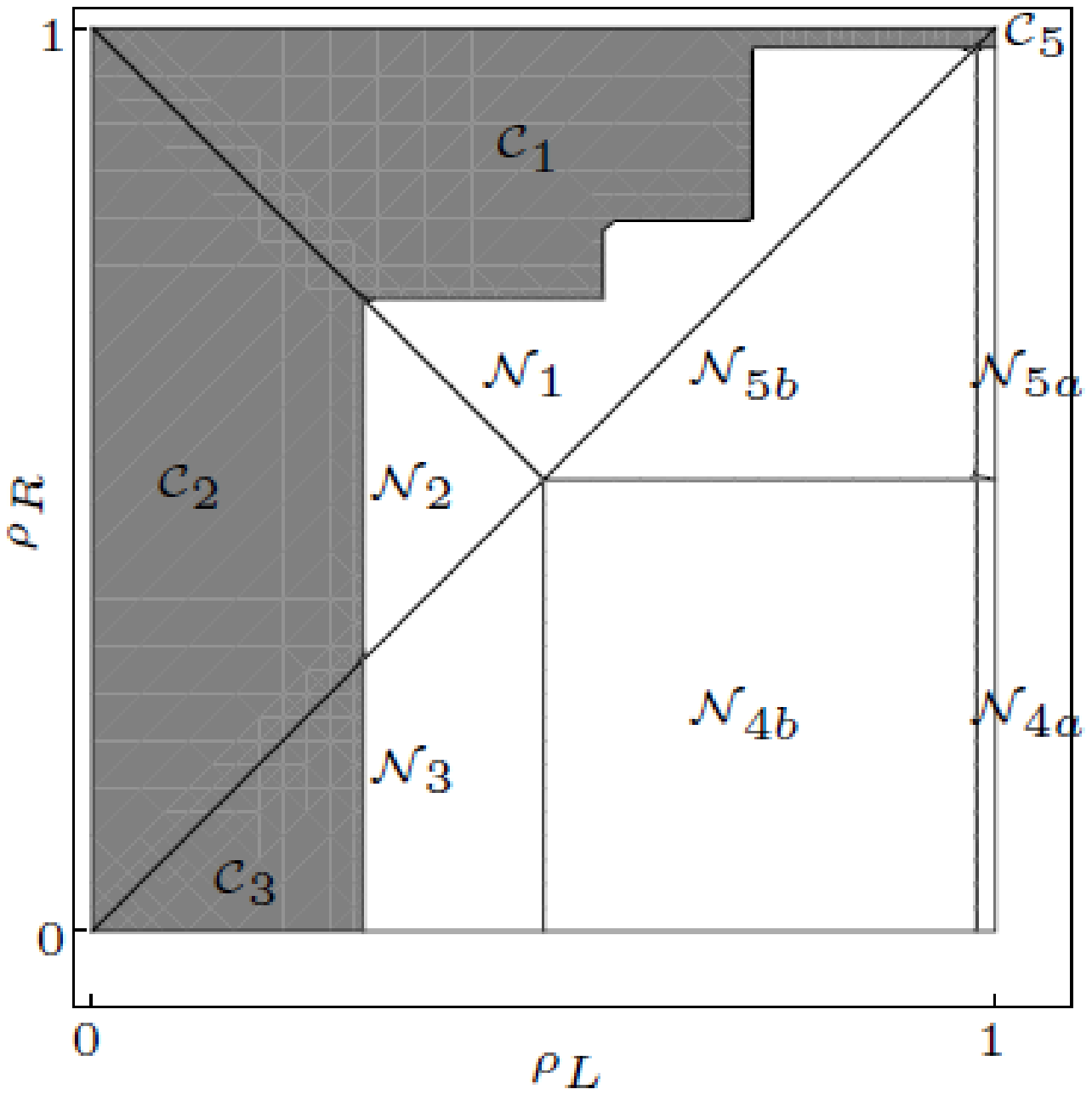}
        }
\caption{The above figures refer to Sec.~\ref{sec:example}.}
\label{fig:TheFlowerKings2}
\end{figure}

Fig.~\ref{fig:TheFlowerKings} corresponds to a linear weight function $w(x) = 2 \iw^{-2} (\iw + x)$, a normalized flux $f(\rho) = \rho(1-\rho)$ (namely the maximal velocity and the maximal density are assumed to be equal to one) and to the values
\begin{align}\nonumber
    p_0 &=0.21, & p_1 &=0.168, & p_2 &=0.021, & \xi_1 &\sim 0.566, & \xi_2 &\sim 0.731,\\ \nonumber
    x_A &= -5.75, & x_B &=-2, & \iw &=1, & x_O &=-2, & x_M &\sim-0.4002,\\ \label{eq:numchoice}
    t_C &=2, & t_D &=5, & t_E & \sim 9.651, & t_G &\sim85.045 ,& t_I &\sim 87.498.
\end{align}
In Fig.~\ref{fig:TheFlowerKings2}~(b), is represented the corresponding map $[t \mapsto \xi(t)]$. In Fig.~\ref{fig:TheFlowerKings2}~(c), we represent the region $\mathcal{C}$, in gray, and the region $\mathcal{N}$, in white, as introduced in Definition~\ref{def:Rp}. Notice that for this choice of $f$ and $p$ the region $\mathcal{C}_4$ happens to be empty.

\section{Technical section}\label{sec:tech}

\begin{lemma}\label{lem:star}
    Consider the family of scalar conservation laws
    \begin{align}\label{eq:sclappr}
        \partial_tu + \partial_xf^n(u) &= 0 & (t,x) &\in [0,T]\times\left]-\infty,0\right[ ~,
    \end{align}
    and assume that $f^n$ converges uniformly on compacts to $f$ as $n$ goes to infinity. Let $\rho^n$ be a sequence of Kru\v zkov entropy weak solutions to~\eqref{eq:sclappr}. If $\rho^n$ converges a.e.~to $\rho$ in $[0,T]\times\reals$, then $f^n\left(\rho^n(\cdot,0-)\right)$ converges weakly to $f\left(\rho(\cdot,0-)\right)$ in $\L1([0,T];\reals)$.
\end{lemma}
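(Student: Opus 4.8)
The plan is to read off the flux trace $f^n(\rho^n(\cdot,0-))$ from the weak formulation of~\eqref{eq:sclappr}, which couples it only to bulk integrals that are stable under the convergences at hand. Concretely, I would fix $\psi\in\Cc\infty(]0,T[;\reals)$ and a cutoff $\theta\in\C\infty(]-\infty,0];\reals)$ with $\theta(0)=1$ and $\supp\theta\subset[-1,0]$, and test the equation against $\phi(t,x)=\psi(t)\,\theta(x)$. Since each $\rho^n$ is a Kru\v zkov entropy solution of a genuinely nonlinear scalar law (recall~\textbf{(F)}), it admits a strong boundary trace $\rho^n(\cdot,0-)$ by Refs.~\cite{Panov}, \cite{Vasseur} (see Theorem~2.2 in Ref.~\cite{scontrainte}), and the Gauss--Green formula for the divergence-free field $(\rho^n,f^n(\rho^n))$ on $[0,T]\times\,]-\infty,0[$ — with no contribution from $t=0,T$ because $\psi$ is compactly supported — yields
\begin{align*}
    \int_0^T f^n\left(\rho^n(t,0-)\right)\psi(t)~{\d}t = \int_0^T\!\!\int_{-\infty}^0 \left[\rho^n\,\partial_t\phi + f^n(\rho^n)\,\partial_x\phi\right]{\d}x~{\d}t .
\end{align*}

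Next I would pass to the limit on the right-hand side. As $\rho^n$ takes values in $[0,R]$ with $\rho^n\to\rho$ a.e., dominated convergence gives $\rho^n\to\rho$ in $\Lloc1$; moreover $\modulo{f^n(\rho^n)-f(\rho)}\le\norma{f^n-f}_{\L\infty([0,R];\reals)}+\modulo{f(\rho^n)-f(\rho)}$, so the uniform convergence $f^n\to f$ and the continuity of $f$ give $f^n(\rho^n)\to f(\rho)$ in $\Lloc1$ as well. Passing to the limit in the Kru\v zkov inequalities shows that $\rho$ is itself an entropy solution of $\partial_t\rho+\partial_x f(\rho)=0$ on $]-\infty,0[$, hence it carries its own strong trace and satisfies the same Gauss--Green identity. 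Therefore the bulk integral converges to $\int_0^T f(\rho(t,0-))\,\psi(t)~{\d}t$, and I obtain
\begin{align*}
    \lim_{n\to\infty}\int_0^T f^n\left(\rho^n(t,0-)\right)\psi(t)~{\d}t = \int_0^T f\left(\rho(t,0-)\right)\psi(t)~{\d}t
\end{align*}
for every $\psi\in\Cc\infty(]0,T[;\reals)$.

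Finally I would upgrade this to weak $\L1$ convergence. Since $f^n\to f$ uniformly and $f$ is bounded, the $f^n$ are equibounded on $[0,R]$, so $f^n(\rho^n(\cdot,0-))$ is bounded in $\L\infty([0,T];\reals)$. Equiboundedness in $\L\infty$ together with convergence on the dense class $\Cc\infty(]0,T[)\subset\L1([0,T];\reals)$ gives weak-$\star$ convergence in $\L\infty([0,T];\reals)$ by a standard $\varepsilon/2$ argument; as $[0,T]$ has finite measure, $\L\infty([0,T];\reals)\subseteq\L1([0,T];\reals)$, so the admissible test class already contains all of $\L\infty$, which is exactly the asserted weak convergence in $\L1([0,T];\reals)$.

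The main obstacle is the first step: the displayed identity encodes that the flux trace is a continuous linear boundary functional of the bulk solution, and this rests on the strong-trace theory for entropy solutions of nonlinear scalar conservation laws. Once it is granted, the limit passage is routine; the essential point is that strong convergence of the \emph{traces} must fail in general, since they may oscillate, so only the weak convergence claimed in the lemma can hold — and it is precisely what the conservative bulk formulation delivers.
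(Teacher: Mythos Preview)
Your proof is correct and follows essentially the same strategy as the paper's: express the flux trace at $x=0$ as a bulk integral via the weak formulation with a tensor-product test function $\psi(t)\theta(x)$ (compactly supported in time, equal to $1$ at $x=0$), pass to the limit in the bulk integral using a.e.~convergence of $\rho^n$ and uniform convergence of $f^n$, and identify the limit using the same identity for $\rho$. The only cosmetic differences are that the paper builds the test function explicitly with mollifiers rather than invoking Gauss--Green directly, and that you add a clean final paragraph upgrading convergence against $\Cc\infty$ to weak $\L1$ convergence via the uniform $\L\infty$ bound, which the paper leaves implicit.
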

\begin{proof}
    First of all notice that by straightforward passage to the limit $\rho$ is a Kru\v zkov entropy weak solution to
    \begin{align}\label{eq:scl}
        \partial_tu + \partial_xf(u) &= 0 & (t,x) &\in [0,T]\times\left]-\infty,0\right[ ~.
    \end{align}
    Further, $\rho^n$ and $\rho$ admit strong left traces on $[0,T] \times\{0\}$, see Refs.~\cite{Panov}, \cite{Vasseur}.

    Now let $\delta_\varepsilon$ be as in~\eqref{eq:delta}. Choose any $\theta, \varphi \in \Cc\infty(\reals;\reals_+)$ such that $\theta(0) = \theta(T) = 0$, $\varphi(0) =1$ and observe that
    \begin{align*}
        \phi(t,x) &= \left[\int_{t-T+\varepsilon}^t \delta_\varepsilon(z) ~{\d}z\right] \left[\int^{x+1/\varepsilon}_{x+\varepsilon} \delta_\varepsilon(z) ~{\d}z\right]
        \theta(t) ~\varphi(x)
    \end{align*}
    is a $\Cc\infty$--map with support in $\left[0,T\right] \times \left[-1/\varepsilon,0\right]$ and, as $\varepsilon$ goes to zero
    \begin{align*}
        \phi(0,x)&\equiv0\equiv\phi(T,x)~, & \partial_t\phi(t,x) \to& \caratt{\left[0,T\right] \times \left]-\infty,0\right]}(t,x) ~\dot\theta(t) ~\varphi(x) ~,\\
        \phi(t,0)&\equiv0~, & \partial_x\phi(t,x) \to& \caratt{\left[0,T\right] \times \left]-\infty,0\right]}(t,x) ~\theta(t) ~\dot\varphi(x)\\
        &&&- \delta^D_{0-}(x) ~\caratt{[0,T]}(t) ~\theta(t) ~.
    \end{align*}
    By hypothesis $\rho^n$ is a weak solution of~\eqref{eq:sclappr}, therefore
    \begin{align*}
        \int_{0}^{T} \int_{\reals_-} \left[ \rho^n ~\partial_t\phi + f^n(\rho^n) ~\partial_x\phi \right] ~{\d} x ~{\d} t=0~,
    \end{align*}
    and letting $\varepsilon$ go to zero we have
    \begin{align*}
        \int_{0}^{T}\int_{\reals_-} \left[ \rho^n ~\varphi ~\dot\theta + f^n(\rho^n) ~\theta ~\dot\varphi \right] ~{\d}x ~{\d}t
        &=\int_{0}^{T} f^n(\rho^n(t,0-)) ~\theta(t) ~{\d} t ~.
    \end{align*}
    By definition $\rho$ is the strong limit in $\Lloc1$ of the sequence $\rho^n$ as $n$ goes to infinity and $f^n \to f$ uniformly on compacts. As $n$ goes to infinity, the left hand side of the above equation converges to
    \begin{align*}
        \int_{\reals_+} \int_{\reals_-} \left[ \rho ~\varphi ~\dot\theta + f(\rho) ~\theta ~\dot\varphi \right] ~{\d} x ~{\d} t ~.
    \end{align*}
    Arguing as above, we also find that
    \begin{align*}
        \int_{0}^{T}\int_{\reals_-} \left[ \rho ~\varphi ~\dot\theta + f(\rho) ~\theta ~\dot\varphi \right] ~{\d}x ~{\d}t
        &=\int_{0}^{T} f(\rho(t,0-)) ~\theta(t) ~{\d} t ~,
    \end{align*}
    because $\rho$ is a weak solution to~\eqref{eq:scl}. Therefore $\int_{0}^{T} f^n(\rho^n(t,0-)) ~\theta(t) ~{\d} t$ converges to $\int_{0}^{T} f(\rho(t,0-)) ~\theta(t) ~{\d} t$ as $n$ goes to infinity, and the weak limit of $f^n(\rho^n(t,0-))$ equals $f(\rho(t,0-))$.
\end{proof}

\subsection{Proof of Proposition~\ref{prop:interactionbound}}\label{sec:6.1}

    For any $\ell \in \naturals$ and $\bar t \in \left]0, \Delta t_h\right]$, we know by Ref.~\cite{ColomboGoatinConstraint} that the function $\rho^{n,h}_{\ell+1}(\bar t)$ is piecewise constant with jumps along a finite number of polygonal lines. Therefore, for any $t$ in a sufficiently small left neighborhood of $\bar t$, we can write, by~\eqref{eq:apprsol}
    \begin{align}\label{eq:Max}
        \rho^{n,h}(t+\ell\Delta t_h,x) &= \rho^{n,h}_{\ell+1}(t,x)
        = \sum_{i \in \mathcal{J}^{n,h}_{-}} \rho_{-,i}^{n,h} ~\caratt{\left[ s_{-,i-1}^{n,h}(t), s_{-,i}^{n,h}(t) \right[} (x)~,\\ \nonumber
        s_{-,i}^{n,h}(t) &= x_{-,i}^{n,h} + \sigma\left(\rho^{n,h}_{-,i}, \rho^{n,h}_{-,i+1}\right) ~\left(t-\bar t\right) ~,
    \end{align}
    where $\mathcal{J}^{n,h}_{-} \subset \integers$, $x_{-,0}^{n,h} = 0$, $\sigma\left(\rho^{n,h}_{-,0}, \rho^{n,h}_{-,1}\right) = 0$, $s_{-,0}^{n,h} \equiv 0$, $s_{-,i-1}^{n,h}(t) < s_{-,i}^{n,h}(t)$, $\rho_{-,i}^{n,h} \in \mathcal{M}^n$, $\rho_{-,i}^{n,h} \ne \rho_{-,i+1}^{n,h}$ for any $i\ne0$ and $f(\rho_{-,0}^{n,h}) = f(\rho_{-,1}^{n,h}) \le p^h(\Xi^{n,h}_{\ell-1})$. Introduce the notation
    \begin{eqnarray*}
        &f^{n,h}_{-,i} = f^n\left(\rho_{-,i}^{n,h}\right) ,
        \qquad
        \Psi_{-,i}^{n,h} = \Psi\left(\rho_{-,i}^{n,h}\right) ,\\
        &\hat\rho_+^{n,h} = \hat\rho\left( p^{n,h}_+ \right) ,
        \qquad
        \check\rho_+^{n,h} = \check\rho\left( p^{n,h}_+ \right) ,
        \qquad
        p_\pm^{n,h} = p^h\left( \Xi^{n,h}(\bar t\pm) \right) ,
        \\
        &\Delta\Upsilon^{n,h}_T\left(\bar t\right)
        =\Upsilon^{n,h}_T\left(\bar t+\right) - \Upsilon^{n,h}_T\left(\bar t-\right) .
    \end{eqnarray*}
    Observe that by definition:
    \begin{align}\label{eq:owio1}
        \hbox{if }\bar t < \Delta t_h\hbox{, then }&\quad p^{n,h}_- = p^{n,h}_+;\\
        \label{eq:owio2}
        &\quad f^{n,h}_{-,0} = f^{n,h}_{-,1} \le p^{n,h}_-;\\
        \label{eq:owio3}
        \hbox{if }\rho^{n,h}_{-,1} < \rho^{n,h}_{-,0}\hbox{, then }&\quad f^{n,h}_{-,0} = f^{n,h}_{-,1} = p^{n,h}_-.
    \end{align}
    We have to distinguish the following two main cases:
    \begin{enumerate}[\textbf{(U)}]
      \item[\textbf{(U)}] either $\bar t = \Delta t_h$, and in this case $\Gamma^{h}(\bar t+) - \Gamma^{h}(\bar t-) = - 5 \cdot2^{-h} ~f(\bar\rho)$;
      \item[\textbf{(I)}] or $\bar t \neq \Delta t_h$, and in this case $\Gamma^{h}(\bar t+) = \Gamma^{h}(\bar t-)$ and $p^{n,h}_- = p^{n,h}_+$ by~\eqref{eq:owio1}.
    \end{enumerate}
    Let us consider more in detail the case $\bar t = \Delta t_h$. If the efficiency of the exit does not change, \textit{i.e.}~$p^{n,h}_- = p^{n,h}_+$, then by hypothesis~\textbf{H2}, $\rho^{n,h}(t)$ is still given by~\eqref{eq:Max} for $t$ lying in a sufficiently small right neighborhood of $(\ell+1) \Delta t_h$, and therefore $\Delta\Upsilon^{n,h}_T(t) = - 5 \cdot2^{-h} ~f(\bar\rho)$. If the efficiency of the exit changes, then $\modulo{p^{n,h}_+ - p^{n,h}_-} = 2^{-h} f(\bar\rho)$ by Lemma~\ref{lem:capraEcavoli} and we have to distinguish the following cases:
    \begin{enumerate}[\textbf{(U0a)}]
    \item[\textbf{(U1)}] If the efficiency of the exit grows, then $p^{n,h}_+ = p^{n,h}_- + 2^{-h} f(\bar\rho)$ and $f^{n,h}_{-,0} = f^{n,h}_{-,1} < p^{n,h}_+$ by~\eqref{eq:owio2}. There are two possibilities:
    \begin{figure}[htpb]
        \centering
        \includegraphics[width=\textwidth]{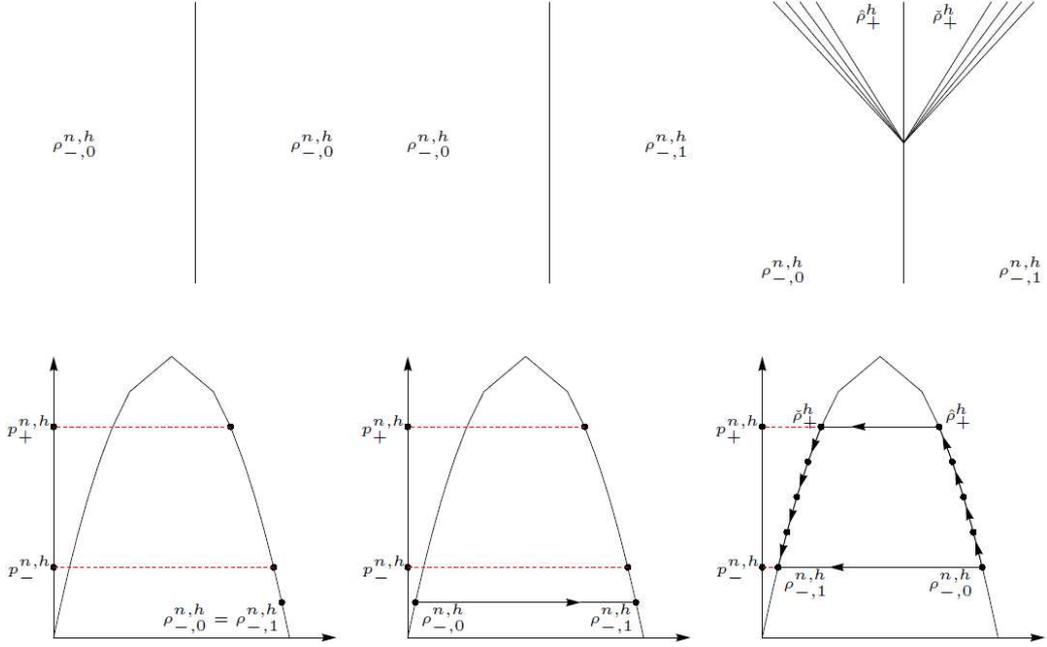}
        \caption{Interactions of the type~\textbf{(U1)}.}
        \label{fig:InterI1}
    \end{figure}
    \item[\textbf{(U1a)}] If $\rho^{n,h}_{-,0} \le \rho^{n,h}_{-,1}$, then the solution does not change its expression, see Fig.~\ref{fig:InterI1}, left and center, and
    \begin{align*}
        \Delta\Upsilon^{n,h}_T(\bar t)
        &= 4 \left[f(\bar\rho) - p^{n,h}_+\right] - 4 \left[f(\bar\rho) - p^{n,h}_-\right] - 5 \cdot2^{-h} f(\bar\rho)
        = - 9 \cdot2^{-h} f(\bar\rho).
    \end{align*}
    \item[\textbf{(U1b)}] If $\rho^{n,h}_{-,1} < \rho^{n,h}_{-,0}$, then $f^{n,h}_{-,0} = f^{n,h}_{-,1} = p^{n,h}_-$ by~\eqref{eq:owio3}. For $t>\bar t$ sufficiently small, the solution contains a rarefaction between $\rho^{n,h}_{-,0}$ and $\hat\rho_+^h$ on the left of the constraint, a nonclassical shock between $\hat\rho_+^h$ and $\check\rho_+^h$ at the constraint and a rarefaction between $\check\rho_+^h$ and $\rho^{n,h}_{-,1}$ on the right of the constraint, see Fig.~\ref{fig:InterI1}, right. Therefore
    \begin{align*}
        \Delta\Upsilon^{n,h}_T(\bar t)
        =& \left[p^{n,h}_+ - p^{n,h}_-\right] + 2\left[f(\bar\rho) - p^{n,h}_+\right] + \left[p^{n,h}_+ - p^{n,h}_-\right]\\
        &- 2\left[f(\bar\rho) - p^{n,h}_-\right] - 5 \cdot2^{-h} ~f(\bar\rho)
        = -5 \cdot2^{-h} ~f(\bar\rho) ~.
    \end{align*}
    \item[\textbf{(U2)}] If the efficiency of the exit decreases, then $p^{n,h}_+ = p^{n,h}_- - 2^{-h} f(\bar\rho)$. Differently from the case~\textbf{(U1)}, we do not know \textit{a priori} whether $p^{n,h}_+$ is less than $f^{n,h}_{-,0} = f^{n,h}_{-,1}$ or not. Therefore there are three possibilities:

    \begin{figure}[htpb]
        \centering
        \includegraphics[width=\textwidth]{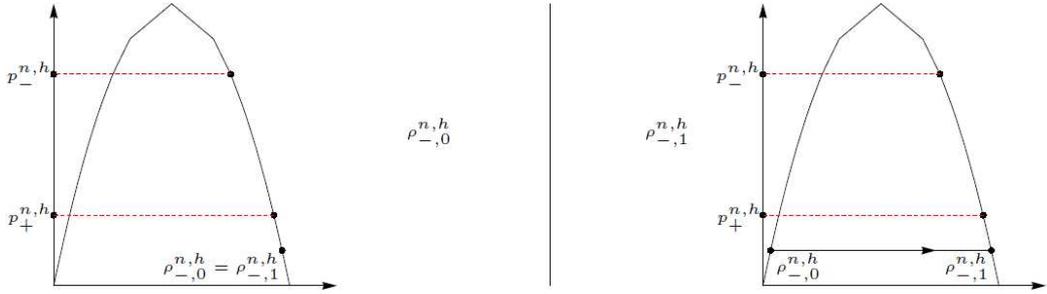}
        \caption{Interactions of the type~\textbf{(U2a)}.}
        \label{fig:InterI21}
    \end{figure}
    \begin{figure}[htpb]
        \centering
        \includegraphics[width=\textwidth]{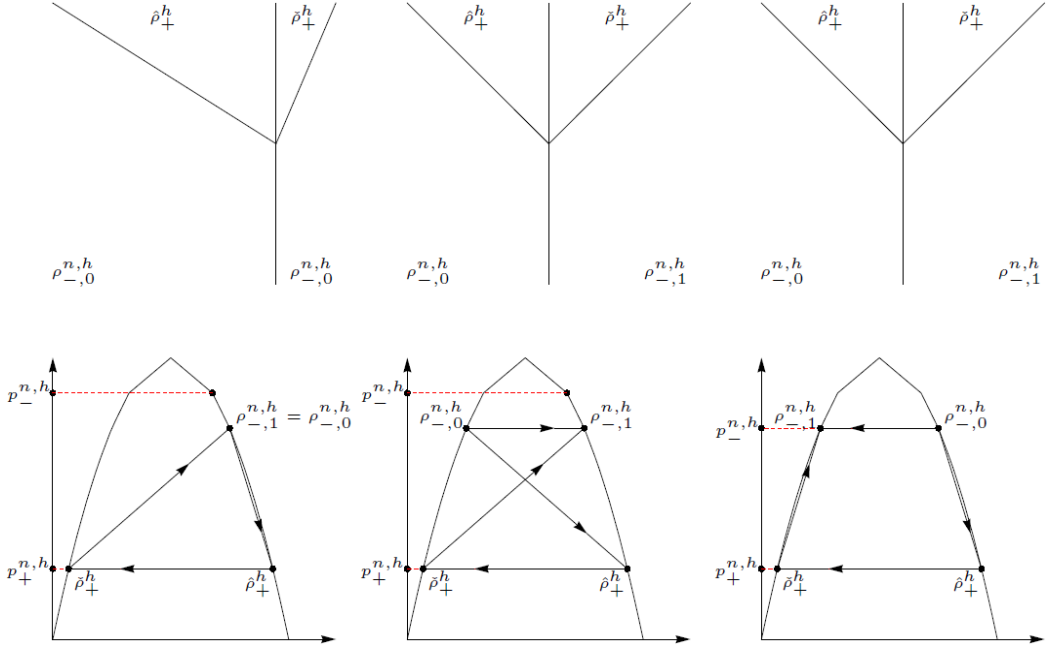}
        \caption{Interactions of the type~\textbf{(U2b)}, \textbf{(U2c)}.}
        \label{fig:InterI22}
    \end{figure}
    \item[\textbf{(U2a)}] If $\rho^{n,h}_{-,0} \le \rho^{n,h}_{-,1}$ and $f^{n,h}_{-,0} = f^{n,h}_{-,1} \le p^{n,h}_+$, then the solution does not change its expression, see Fig.~\ref{fig:InterI21}, and
    \begin{align*}
        \Delta\Upsilon^{n,h}_T(\bar t)
        &= 4 \left[f(\bar\rho) - p^{n,h}_+\right] - 4 \left[f(\bar\rho) - p^{n,h}_-\right] -5 \cdot2^{-h} ~f(\bar\rho)
        = -2^{-h} f(\bar\rho) ~.
    \end{align*}
    \item[\textbf{(U2b)}] If $\rho^{n,h}_{-,0} \le \rho^{n,h}_{-,1}$ and $p^{n,h}_+ < f^{n,h}_{-,0} = f^{n,h}_{-,1}$, then after time $\bar t$ the solution performs a shock between $\rho^{n,h}_{-,0}$ and $\hat\rho_+^h$ on the left of the constraint, a nonclassical shock between $\hat\rho_+^h$ and $\check\rho_+^h$ at the constraint and a shock between $\check\rho_+^h$ and $\rho^{n,h}_{-,1}$ on the right of the constraint, see Fig.~\ref{fig:InterI22}, left and center. Therefore, if $\rho^{n,h}_{-,0} \ne \rho^{n,h}_{-,1}$, then
    \begin{align*}
        \Delta\Upsilon^{n,h}_T(\bar t)
        =& 2\left[2f(\bar\rho) - f^{n,h}_{-,1} - p^{n,h}_+\right] + 2 \left[f(\bar\rho) - p^{n,h}_+\right] - 2 \left[f(\bar\rho) - f^{n,h}_{-,1}\right]\\
        & - 4 \left[f(\bar\rho) - p^{n,h}_-\right] -5 \cdot2^{-h} ~f(\bar\rho)
        = -2^{-h} f(\bar\rho)~,
    \end{align*}
    while, if $\rho^{n,h}_{-,0} = \rho^{n,h}_{-,1}$, then
    \begin{align*}
        \Delta\Upsilon^{n,h}_T(\bar t)
        =& \left[2f(\bar\rho) - f^{n,h}_{-,1} - p^{n,h}_+\right] + 2 \left[f(\bar\rho) - p^{n,h}_+\right] + \left[f^{n,h}_{-,1} - p^{n,h}_+\right]\\
        &- 4 \left[f(\bar\rho) - p^{n,h}_-\right] -5 \cdot2^{-h} ~f(\bar\rho)
        = -2^{-h} f(\bar\rho)~.
    \end{align*}
    \item[\textbf{(U2c)}] If $\rho^{n,h}_{-,1} < \rho^{n,h}_{-,0}$, then $p^{n,h}_+ < p^{n,h}_- = f^{n,h}_{-,0} = f^{n,h}_{-,1}$ by~\eqref{eq:owio3}, and after time $\bar t$ the solution performs a shock between $\rho^{n,h}_{-,0}$ and $\hat\rho_+^h$ on the left of the constraint, a nonclassical shock between $\hat\rho_+^h$ and $\check\rho_+^h$ at the constraint and a shock between $\check\rho_+^h$ and $\rho^{n,h}_{-,1}$ on the right of the constraint, see Fig.~\ref{fig:InterI22}, right. Therefore
    \begin{align*}
        \Delta\Upsilon^{n,h}_T(\bar t)
        =& \left[p^{n,h}_- - p^{n,h}_+\right] + 2\left[f(\bar\rho) - p^{n,h}_+\right] + \left[p^{n,h}_- - p^{n,h}_+\right]\\
        &- 2\left[f(\bar\rho) - p^{n,h}_-\right] -5 \cdot2^{-h} ~f(\bar\rho)
        = -2^{-h} f(\bar\rho)~.
    \end{align*}
    \end{enumerate}
    In conclusion, for the case~\textbf{(U)} we proved that $\Delta\Upsilon^{n,h}_T\left( (\ell+1) \Delta t_h\right) \le -2^{-h} f(\bar\rho) \le -2^{-n} f(\bar\rho)$ for any $\ell \in \naturals$.

    Now, assume that we have $\bar t \in \left]0, \Delta t_h\right[$. If at time $\bar t$ no interaction occurs, then $\Delta\Upsilon^{n,h}_T(\bar t) = 0$. The remaining part of the proof consists in a detailed study of all possible interactions. We start with the most classical case when the interaction occurs away from $x=0$.
    \begin{enumerate}[\textbf{(I0a)}]
    \item[\textbf{(I0)}]If two waves, respectively between $\rho^{n,h}_{i-1}$ and $\rho^{n,h}_i$ and between $\rho^{n,h}_{i}$ and $\rho^{n,h}_{i+1}$, interact away from the constraint, then at least one of the two waves has to be a shock and, in any case, the resulting wave is a shock between $\rho^{n,h}_{i-1}$ and $\rho^{n,h}_{i+1}$. Therefore
        \begin{align*}
            \Delta\Upsilon^{n,h}_T(\bar t)
            &=\modulo{\Psi^{n,h}_{-,i-1} - \Psi^{n,h}_{-,i+1}} - \modulo{\Psi^{n,h}_{-,i-1} - \Psi^{n,h}_{-,i}} - \modulo{\Psi^{n,h}_{-,i} - \Psi^{n,h}_{-,i+1}} \le 0
        \end{align*}
        and the number of waves after the interaction diminishes.
    \end{enumerate}
    We now study the case in which a rarefaction reaches $x=0$.
    \begin{enumerate}[\textbf{(I1c)}]
    \item[\textbf{(I1)}] If a rarefaction reaches $x=0$ from the left, then $\rho^{n,h}_{-,0} < \rho^{n,h}_{-,-1} \le \bar\rho$ and $f^{n,h}_{-,0} + 2^{-n} = f^{n,h}_{-,-1}$. In particular, the solution cannot perform a nonclassical shock at $x=0$ before the interaction. There are therefore three possibilities:
    \item[\textbf{(I1a)}] If $\rho^{n,h}_{-,0} = \rho^{n,h}_{-,1}$ and $f^{n,h}_{-,-1} \le p^{n,h}_\pm$, then the rarefaction crosses $x=0$ and $\Delta\Upsilon^{n,h}_T(\bar t) = 0$.
    \item[\textbf{(I1b)}] If $\rho^{n,h}_{-,0} = \rho^{n,h}_{-,1}$ and $p^{n,h}_\pm < f^{n,h}_{-,-1}$, then $p^{n,h}_\pm = f^{n,h}_{-,0} = f^{n,h}_{-,1}$ because $n>h$ and for~\eqref{eq:owio2}. After the interaction, the solution performs a shock between $\rho^{n,h}_{-,-1}$ and $\hat\rho_+^h$ on the left of the constraint and a nonclassical shock between $\hat\rho_+^h$ and $\rho^{n,h}_{-,1}$ at the constraint. Therefore
    \begin{align*}
        \Delta\Upsilon^{n,h}_T(\bar t) &
        = \left[2f(\bar\rho) - f^{n,h}_{-,-1} - p^{n,h}_\pm\right] + 2 \left[f(\bar\rho) - p^{n,h}_\pm\right]- \left[f^{n,h}_{-,-1} - p^{n,h}_\pm\right]\\
        & - 4 \left[f(\bar\rho) - p^{n,h}_\pm\right] = 2\left[ p^{n,h}_\pm - f^{n,h}_{-,-1}\right] = -2^{1-n}f(\bar\rho)~.
    \end{align*}
    \item[\textbf{(I1c)}] If $\rho^{n,h}_{-,0} \ne \rho^{n,h}_{-,1}$, then after the interaction the solution performs a shock between $\rho^{n,h}_{-,-1}$ and $\rho^{n,h}_{-,1}$ on the left of the constraint. Therefore
    \begin{align*}
        \Delta\Upsilon^{n,h}_T(\bar t)
        =& \left[2f(\bar\rho) - f^{n,h}_{-,-1} - f^{n,h}_{-,1}\right]- \left[f^{n,h}_{-,-1} - f^{n,h}_{-,1}\right] \\
        &- 2\left[f(\bar\rho) - f^{n,h}_{-,1}\right] = 2\left[ f^{n,h}_{-,1} - f^{n,h}_{-,-1}\right] = -2^{1-n}f(\bar\rho) ~.
    \end{align*}
    \item[\textbf{(I2)}] If a rarefaction reaches $x=0$ from the right, then the analysis turns out to be completely analogous to that for the case~\textbf{(I1)}, and therefore it is omitted.
    \end{enumerate}
    We now study the cases when a shock reaches $x=0$.
    \begin{enumerate}[\textbf{(I0a)}]
    \item[\textbf{(I3)}] If a shock reaches $x=0$ from the left, then $\rho^{n,h}_{-,-1} < \min\{\rho^{n,h}_{-,0}, \bar\rho\}$, $f^{n,h}_{-,-1} < f^{n,h}_{-,0} = f^{n,h}_{-,1} \le p^{n,h}_\pm$ by~\eqref{eq:owio2}. There are three possibilities:
    \item[\textbf{(I3a)}] If $\rho^{n,h}_{-,0} = \rho^{n,h}_{-,1}$, then the shock crosses $x=0$ and we have $\Delta\Upsilon^{n,h}_T(\bar t) = 0$.
    \item[\textbf{(I3b)}] If $\rho^{n,h}_{-,0} < \rho^{n,h}_{-,1}$, then after the interaction the solution performs a shock between $\rho^{n,h}_{-,-1}$ and $\rho^{n,h}_{-,1}$ on the right of the constraint. Therefore
    \begin{align*}
        \Delta\Upsilon^{n,h}_T(\bar t)
        =& \left[2f(\bar\rho) - f^{n,h}_{-,1} - f^{n,h}_{-,-1}\right] - \left[f^{n,h}_{-,1} - f^{n,h}_{-,-1}\right] - 2 \left[f(\bar\rho) - f^{n,h}_{-,1}\right] = 0 ~.
    \end{align*}
    \item[\textbf{(I3c)}] If $\rho^{n,h}_{-,1} < \rho^{n,h}_{-,0}$, then $p^{n,h}_\pm = f^{n,h}_{-,0} = f^{n,h}_{-,1}$ by~\eqref{eq:owio3}, and after the interaction the solution performs a shock between $\rho^{n,h}_{-,-1}$ and $\rho^{n,h}_{-,1}$ on the right of the constraint. Therefore
    \begin{align*}
        \Delta\Upsilon^{n,h}_T(\bar t)
        =& \left[p^{n,h}_\pm - f^{n,h}_{-,-1}\right] + 4 \left[f(\bar\rho) - p^{n,h}_\pm\right]\\
        &- \left[2f(\bar\rho) - p^{n,h}_\pm - f^{n,h}_{-,-1}\right] - 2 \left[f(\bar\rho) - p^{n,h}_\pm\right] = 0 ~.
    \end{align*}
    \item[\textbf{(I4)}] If a shock reaches $x=0$ from the right, then the analysis turns out to be completely analogous to that for the case~\textbf{(I3)}, and therefore it is omitted.
    \end{enumerate}
    This concludes the proof of Proposition~\ref{prop:interactionbound}.

\subsection{Proof Proposition~\ref{prop:lallelarsson}}\label{sec:lallelarsson}

    We have to prove that the conditions given in Definition~\ref{def:entropysol} are satisfied. Fix $\phi \in \Cc\infty(\reals^2; \reals_+)$ and $k \in [0,R]$. Let $\delta_\varepsilon$ be as in~\eqref{eq:delta}. Consider
    \begin{align*}
        \phi_\varepsilon(t,x) &= \sum_{\ell\in\naturals} \phi_\varepsilon^\ell(t-\ell\Delta t_h,x) = \sum_{\ell\in\naturals} \phi(t,x) \int_{t-(\ell+1)\Delta t_h+\varepsilon}^{t-\ell\Delta t_h} \delta_\varepsilon(y) ~{\d}y
    \end{align*}
    and observe that $\phi_\varepsilon, \phi_\varepsilon^\ell \in \Cc\infty(\reals^2; \reals_+)$, $\phi_\varepsilon^\ell(\cdot,x)$ has support in $[0, \Delta t_h]$ for any $x \in\reals$, and as $\varepsilon$ goes to zero
    \begin{align*}
        \phi_\varepsilon(t,x) &\to \phi(t,x)~,\\
        \partial_t\phi_\varepsilon(t,x) &\to \partial_t\phi(t,x) + \sum_{\ell\in\naturals} \phi(t,x) \left[\delta^D_{\ell\Delta t_h}(t) - \delta^D_{(\ell+1)\Delta t_h}(t)\right],\\
        \partial_x\phi_\varepsilon(t,x) &\to \partial_x\phi(t,x) ~.
    \end{align*}
    By construction, since each $[t \mapsto \rho^{n,h}_{\ell+1}(t)]$ is an entropy weak solution of~\eqref{eq:constrianedapprk} in the sense of Definition~\ref{def:entropysol}, we have
    \begin{align*}
        &\int_0^{\Delta t_h} \int_\reals \modulo{\rho^{n,h}_{\ell+1}(t,x)-k} \partial_t\phi_\varepsilon^\ell(t,x)  ~{\d} x ~{\d} t\\
        &+\int_0^{\Delta t_h} \int_\reals \sign(\rho^{n,h}_{\ell+1}(t,x)-k) \left[f^n\left(\rho^{n,h}_{\ell+1}(t,x)\right) - f^n(k)\right] \partial_x\phi_\varepsilon^\ell(t,x) ~{\d} x ~{\d} t\\
        &+ 2 \int_0^{\Delta t_h} \left[1 - \dfrac{p^h\left( \Xi^{n,h}_\ell(t) \right)}{f^n(\bar\rho)}   \right] f^n(k) ~\phi_\varepsilon^\ell(t,0) ~{\d} t \ge0 ~.
    \end{align*}
    By summing over $\ell$ and letting $\varepsilon$ go to zero, by~\eqref{eq:apprsol} we obtain that
    \begin{align*}
        0\le&\sum_{\ell\in\naturals} \Bigg\{\int_0^{\Delta t_h} \int_\reals \modulo{\rho^{n,h}_{\ell+1}(t,x)-k} \partial_t\phi(t+\ell\Delta t_h,x)  ~{\d} x ~{\d} t\\
        &+ \int_\reals \modulo{\rho^{n,h}_{\ell+1}(0,x)-k} ~\phi(\ell\Delta t_h,x)  ~{\d} x
        - \int_\reals \modulo{\rho^{n,h}_{\ell+1}(\Delta t_h,x)-k} \phi\left( (\ell+1)\Delta t_h,x \right)  {\d} x
        \\
        &+\int_0^{\Delta t_h}\!\!\! \int_\reals \sign(\rho^{n,h}_{\ell+1}(t,x)-k) \left[f^n\left(\rho^{n,h}_{\ell+1}(t,x)\right) - f^n(k)\right] \partial_x\phi\left(t+\ell\Delta t_h,x\right) {\d} x ~{\d} t\\
        &+ 2 \int_0^{\Delta t_h} \left[1 - \dfrac{p^h\left( \Xi^{n,h}_\ell(t) \right)}{f^n(\bar\rho)}   \right] f^n(k) ~\phi(t+\ell\Delta t_h,0) ~{\d} t \Bigg\}\\
        =&
        \int_{\reals_+} \int_\reals \modulo{\rho^{n,h}(t,x)-k} \partial_t\phi(t,x)  ~{\d} x ~{\d} t\\
        &+\int_{\reals_+} \int_\reals \sign(\rho^{n,h}(t,x)-k) \left[f^n\left(\rho^{n,h}(t,x)\right) - f^n(k)\right] \partial_x\phi(t,x) ~{\d} x ~{\d} t\\
        &+ 2 \int_{\reals_+} \left[1 - \dfrac{p^h\left( \Xi^{n,h}(t) \right)}{f^n(\bar\rho)}   \right] f^n(k) ~\phi(t,0) ~{\d} t
        + \int_{\reals_+} \modulo{\rho^{n}_0(x)-k} ~\phi(0,x) ~{\d}x ~.
    \end{align*}
    Finally, by construction, $f^n\left(\rho^{n,h}(t, 0\pm)\right) \le p^h\left( \Xi^{n,h}(t) \right)$ for a.e.~$t \in \reals_+$, and this ends the proof of Proposition~\ref{prop:lallelarsson}.

\subsection{Proof of Proposition~\ref{prop:riemann}}\label{sec:proofRiem}

We list here two basic properties which will be of great help in the following case by case analysis.

First, by definition~\eqref{eq:xi}, $\xi(0)=\rho_L$ and the map $[t \mapsto \xi(t)]$ is continuous. Thus, by assumption~\textbf{(P2)} we have that for any $t>0$ sufficiently small
    \begin{description}
      \item[bp1] if $\xi(t) < \rho_L$, then $p(\xi(t)) \equiv p(\rho_L-)$;
      \item[bp2] if $\rho_L < \xi(t)$, then $p(\xi(t)) \equiv p(\rho_L+)$.
    \end{description}
The case $\xi(t) \equiv \rho_L$ is somehow special and has to be studied separately for each specific case.

Second, when the solution is nonclassical, due to the finite speed of propagation of the waves,  the assumption~\textbf{(P2)} and properties~\textbf{bp1} and~\textbf{bp2}, we have
\begin{description}
  \item[np1] if $\mathcal{R}\left[\rho_L, \hat\rho(\bar p)\right](x) \equiv \rho_L$ for $x<0$, then $\bar p = f(\rho_L) \in \left[p(\rho_L+), p(\rho_L-)\right]$;
  \item[np2] if $\bar p \ne f(\rho_L)$ and $\rho_L < \hat\rho(\bar p)$, then $\bar p = p(\rho_L+)$;
  \item[np3] if $\bar p \ne f(\rho_L)$ and $\hat\rho(\bar p) < \rho_L$, then $\bar p = p(\rho_L-)$;
  \item[np4] if $p$ is continuous in $\rho_L$, namely $p(\rho_L-)=p(\rho_L+)$, then $\bar p = p(\rho_L)$.
\end{description}

Now we start the description of the possible cases and we proceed as following. First, we show that for any initial datum satisfying~\textbf{(C$i$)}, $i=1,\ldots,5$, the problem actually has a unique solution and that the solution is classical. Second, we take into consideration the corresponding case~\textbf{(N$i$)}, for which we prove that the classical solution is not suitable and that there exists a unique nonclassical solution.

It is important to stress that in general the solutions to the constrained Riemann problem~\eqref{eq:constrianedRiemann} are not self--similar, see Example~\ref{ex:es0}. All the cases listed below describe self--similar solutions because we let the solutions to evolve only on a small interval of time.

    \begin{enumerate}[\textbf{(N4a)}]
      \item[\textbf{(C1)}]  In this case $\left[ (t,x) \mapsto \mathcal{R}[\rho_L, \rho_R](x/t) \right]$ performs a shock with negative speed $\sigma(\rho_L, \rho_R)$ and satisfies~\eqref{eq:constrianedRiemann2} because $f(\rho_R) \le p(\rho_L+)$ and $p(\xi(t)) \equiv p(\rho_L+)$ by~\textbf{bp2}. Assume that there exists a nonclassical solution of the form~\eqref{eq:nonclassicalsol}. Observe that the assumptions $\rho_L < \rho_R$ and $f(\rho_R) < f(\rho_L)$ together imply that $\bar\rho < \rho_R$.  Then $\check\rho(\bar p) \le \bar\rho < \rho_R$ and $\mathcal{R}[\check\rho(\bar p),\rho_R]$ is given by a shock with non negative speed if and only if $\bar p \le f(\rho_R)$, or equivalently, $\rho_R \le \hat\rho(\bar p)$. As a consequence, $\bar p \le f(\rho_R) < f(\rho_L)$, $\rho_L < \rho_R \le \hat\rho(\bar p)$ and by~\textbf{np2} $\bar p$ coincides with $p(\rho_L+)$. In conclusion we have $\bar p \le f(\rho_R) \le p(\rho_L+) = \bar p$, namely $f(\rho_R) =\bar p$ and the nonclassical solution coincides with the classical one.
      \item[\textbf{(N1)}] In this case $\left[ (t,x) \mapsto \mathcal{R}[\rho_L, \rho_R](x/t) \right]$ does not satisfy~\eqref{eq:constrianedRiemann2} because $f(\rho_R) > p(\rho_L+)$, see case~\textbf{(C1)}. Therefore, there does not exist any classical solution and we can consider only nonclassical solutions of the form~\eqref{eq:nonclassicalsol}. If $p$ is continuous in $\rho_L$, then by~\textbf{np4} we have that $\bar p = p(\rho_L)$. If $p$ experiences a jump at $\rho_L$ then, one may wonder which value in $\left[p(\rho_L+), p(\rho_L-)\right]$ has to be chosen as $\bar p$. As in the case~\textbf{(C1)}, the assumptions  imply that $\bar\rho < \rho_R$ and then that $\check\rho(\bar p) < \rho_R$ and $\bar p \le f(\rho_R)$. Then $\bar p$ is strictly smaller than $f(\rho_L)$ and $\hat\rho(\bar p) > \rho_L$. As a consequence, property~\textbf{np2} forces us to choose the unique possible value of $\bar p$, which is $p(\rho_L+)$.
      \item[\textbf{(C2)}] In this case $\left[ (t,x) \mapsto \mathcal{R}[\rho_L, \rho_R](x/t) \right]$ performs a shock with non negative speed $\sigma(\rho_L, \rho_R)$ and it satisfies~\eqref{eq:constrianedRiemann2} because $f(\rho_L) \le p(\rho_L+)$. Assume that there exists a nonclassical solution of the form~\eqref{eq:nonclassicalsol}. Observe that the assumptions $\rho_L < \rho_R$ and $f(\rho_R) \ge f(\rho_L)$ together imply that $\bar\rho > \rho_L$.  Then $\hat\rho(\bar p) \ge \bar\rho >\rho_L$ and $\mathcal{R}[\rho_L, \hat\rho(\bar p)]$ is given by a shock with non positive speed if and only if $\bar p \le f(\rho_L)$. Thus $\bar p \le f(\rho_L) \le p(\rho_L+)$ and this implies by~\eqref{eq:nonclassicalsol3} that $\bar p = f(\rho_L) = p(\rho_L+)$ and that the nonclassical solution coincides with the classical one.
      \item[\textbf{(N2)}] In this case $\left[ (t,x) \mapsto \mathcal{R}[\rho_L, \rho_R](x/t) \right]$ does not satisfy~\eqref{eq:constrianedRiemann2} because $f(\rho_L) > p(\rho_L-)$, see case~\textbf{(C2)}. Therefore, there does not exist any classical solution and we can consider only nonclassical solutions of the form~\eqref{eq:nonclassicalsol}. As in the case~\textbf{(C2)},  the assumptions imply $\hat\rho(\bar p) \ge \bar\rho >\rho_L$. Furthermore, by~\eqref{eq:nonclassicalsol3} we have $\bar p \le p(\rho_L-) < f(\rho_L)$, and as a consequence, property~\textbf{np2} forces us to choose $\bar p = p(\rho_L+)$.
      \item[\textbf{(C3)}] In this case $\left[ (t,x) \mapsto \mathcal{R}[\rho_L, \rho_R](x/t) \right]$ performs a possible null rarefaction on the right of the constraint and it satisfies~\eqref{eq:constrianedRiemann2} because $f(\rho_L) \le p(\rho_L+)$. Assume that there exists a nonclassical solution of the form~\eqref{eq:nonclassicalsol}. Since $\rho_L \le \bar\rho \le \hat\rho(\bar p)$, $\mathcal{R}[\rho_L,\hat\rho(\bar p)]$ is given by a shock that has non positive speed if and only if $\bar p \le f(\rho_L)$. Therefore $\bar p \le f(\rho_L) \le p(\rho_L+)$ and this by~\eqref{eq:nonclassicalsol3} implies that $\bar p = f(\rho_L) = p(\rho_L+)$ and that the nonclassical solution coincides with the classical one.
      \item[\textbf{(N3)}] In this case $\left[ (t,x) \mapsto \mathcal{R}[\rho_L, \rho_R](x/t) \right]$ does not satisfy~\eqref{eq:constrianedRiemann2} because $f(\rho_L) > p(\rho_L-)$, see case~\textbf{(C3)}. Therefore, there does not exist any classical solution and we can consider only nonclassical solutions of the form~\eqref{eq:nonclassicalsol}. By hypothesis and~\eqref{eq:nonclassicalsol3} we have $f(\rho_L) > p(\rho_L-) \ge \bar p$. Therefore $\rho_L \le \bar\rho < \hat\rho(\bar p)$ and by~\textbf{np2} we have $\bar p = p(\rho_L+)$.
      \item[\textbf{(C4)}] In this case $\left[ (t,x) \mapsto \mathcal{R}[\rho_L, \rho_R](x/t) \right]$ performs a rarefaction with speeds between $\lambda(\rho_L) <0$ and $\lambda(\rho_R)\ge0$ and it satisfies~\eqref{eq:constrianedRiemann2} because $f(\bar\rho) = p(\rho_L+)$ implies that $p(\rho) = f(\bar\rho)$ for all $\rho \le \rho_L$. Moreover, it implies also that $p$ is continuous in $\rho_L$ and therefore, by~\textbf{np4}, any nonclassical solution of the form~\eqref{eq:nonclassicalsol} must have $\bar p = p(\rho_L) = f(\bar\rho)$, but in this case the nonclassical solution coincides with the classical one.
     \item[\textbf{(N4)}] In this case $\left[ (t,x) \mapsto \mathcal{R}[\rho_L, \rho_R](x/t) \right]$ does not satisfy~\eqref{eq:constrianedRiemann2} because $f(\bar\rho) > p(\rho_L-)$, see case~\textbf{(C4)}. Therefore, there does not exist any classical solution and we can consider only nonclassical solutions of the form~\eqref{eq:nonclassicalsol}.
         \begin{enumerate}[\textbf{(N4a)}]
     \item[\textbf{(N4a)}] By assumption and~\eqref{eq:nonclassicalsol3} $f(\rho_L) < p(\rho_L+) \le \bar p$ and therefore $\hat\rho(\bar p) < \rho_L$ and by~\textbf{np3} we have $\bar p = p(\rho_L-)$.
     \item[\textbf{(N4b)}] By assumption and~\eqref{eq:nonclassicalsol3} $f(\rho_L) > p(\rho_L-) \ge \bar p$ and therefore $\hat\rho(\bar p) > \rho_L$ and by~\textbf{np2} we have $\bar p = p(\rho_L+)$.
         \end{enumerate}
     \item[\textbf{(C5)}] In this case $\left[ (t,x) \mapsto \mathcal{R}[\rho_L, \rho_R](x/t) \right]$ performs a possible null rarefaction on the left of the constraint and it satisfies~\eqref{eq:constrianedRiemann2} because $f(\rho_R) \le p(\rho_L-)$ and $p(\xi(t)) \equiv p(\rho_L-)$ by~\textbf{bp1}. Assume that there exists a nonclassical solution of the form~\eqref{eq:nonclassicalsol}. Since by assumption and~\eqref{eq:nonclassicalsol3} $\bar p \ge p(\rho_L+) > f(\rho_L)$, we have $\hat\rho(\bar p) < \rho_L$ and by~\textbf{np3} $\bar p = p(\rho_L-)$, but in this case the nonclassical solution coincides with the classical one.
     \item[\textbf{(N5a)}] In this case $\left[ (t,x) \mapsto \mathcal{R}[\rho_L, \rho_R](x/t) \right]$ does not satisfy~\eqref{eq:constrianedRiemann2} because $f(\rho_R) > p(\rho_L-)$, see case~\textbf{(C5)}. Therefore, there does not exist any classical solution and we can consider only nonclassical solutions of the form~\eqref{eq:nonclassicalsol}.
    \begin{enumerate}[\textbf{(N5b)}]
    \item[\textbf{(N5b)}] By assumption and~\eqref{eq:nonclassicalsol3}, $f(\rho_L) < p(\rho_L+)\le \bar p$ and therefore $\hat\rho(\bar p) < \rho_L$ and by~\textbf{np3} we have $\bar p = p(\rho_L-)$.
     \item[\textbf{(N5b)}] By assumption and~\eqref{eq:nonclassicalsol3}, $f(\rho_L) > p(\rho_L-) \ge \bar p$ and therefore $\hat\rho(\bar p) < \rho_L$ and by~\textbf{np2} we have $\bar p = p(\rho_L+)$.
    \end{enumerate}
    \end{enumerate}

\subsection{Proof of Proposition~\ref{prop:Riemann}}\label{sec:technicalRiemann}

\begin{enumerate}[\textbf{(R1)}]
\item[\textbf{(R1)}] Any solution given by $\mathcal{R}^\star$ coincides on each side of the constraint with a solution given by the classical Riemann solver $\mathcal{R}$. Therefore it satisfies the Rankine--Hugoniot jump condition along any of its discontinuities away from the constraint. Finally, by definition of $\hat\rho$ and $\check\rho$, it satisfies the Rankine--Hugoniot jump condition also along the constraint.
\item[\textbf{(R2)}] It is clear by the proof of Proposition~\ref{prop:riemann}.
\item[\textbf{(R3)}] It is proved as in~\textbf{(R1)} since any classical solution is in $\BV$.
\item[\textbf{(R4)}] As was proved in  Ref.~\cite{ColomboGoatinConstraint}, $\mathcal{R}^\star $ is continuous on $\mathcal{C}\cup\mathcal{N}$. If $(\rho_L,\rho_R)$ is not in $\mathcal{C}\cup\mathcal{N}$ then $ p$ experiences a jump at $\xi=\rho_L$. Therefore, the local in time solutions of the Riemann problem for the initial conditions $(\rho_L+\varepsilon,\rho_R)$ and $(\rho_L-\varepsilon,\rho_R)$ are different and only one of the two converges to  $\mathcal{R}^\star[\rho_L, \rho_R]$ as $\varepsilon>0$ goes to zero.
\item[\textbf{(R5)}] We first stress once again that we can discuss the consistency property of our Riemann solvers only locally in time because, in general, the solutions may be not even self--similar globally in time. However, locally in time, the efficiency of the exit can be assumed to be constant and it is thus sufficient to proceed as in Ref.~\cite{ColomboGoatinConstraint}.
\item[\textbf{(R6)}] It is clear by the proof of Proposition~\ref{prop:riemann}.
\end{enumerate}

\section{Further discussion on the model, conclusions and perspectives}\label{sec:conlusion}

The present model does not take into account extremal cases. For instance, whenever a high density is approaching the exit the efficiency of the exit can become very small. As a consequence, even a small density of pedestrians may form a queue provided a sufficiently high density is approaching from behind. However, at least in this case, the low efficiency of the exit has not a ``big'' effect on the flow at the exit which is in fact ``small''. Further investigation and modeling may be needed in order to deal with such singular effects.

The planned forthcoming papers of the authors aim to generalize the present model to the initial--boundary value problem with non--local constraint, to code the resulting model, to simulate realistic evacuations, to state and solve optimal management problems, to reproduce the so--called Braess' paradox for pedestrian flows (e.g.~the situation where a wider door placed before the exit door makes the evacuation faster), see Refs.~\cite{colombo2010macroscopic}, \cite{gakoto}, and to introduce further features of ``panic'' behavior, see Refs.~\cite{chalonsgoatinseguin}, \cite{ColomboRosini1}.

\section*{Acknowledgment}

The authors are partially supported by the French ANR JCJC grant CoToCoLa. The third author was partially supported by ICM, University of Warsaw, Narodowe Centrum Nauki, grant 4140, Polonium 2011 (French-Polish cooperation program) under the project ``CROwd Motion Modeling and Management'' and the Organizing Committee of HYP2012.

\bibliographystyle{acm}
    \bibliography{bibliography}

\end{document}